\newcolumntype{P}[1]{>{\centering\arraybackslash}p{#1}}
\renewcommand*{\eqref}[1]{%
  \hyperref[{#1}]{\textup{\tagform@{\ref*{#1}}}}%
}
\numberwithin{equation}{section}
\newtheorem{theorem}{Theorem}[section]
\newtheorem{proposition}[theorem]{Proposition}
\newtheorem{question}[theorem]{Question}
\newtheorem{conjecture}[theorem]{Conjecture}
\newtheorem{corollary}[theorem]{Corollary}
\newtheorem{lemma}[theorem]{Lemma}
\theoremstyle{definition}
\newtheorem{definition}[theorem]{Definition}
\newtheorem{remark}[theorem]{Remark}
\newtheorem*{claim*}{Claim}
\newcommand{\E}{\mathbb{E}}
\newcommand{\F}{\mathbb{F}}
\newcommand{\Q}{\mathbb{Q}}
\newcommand{\R}{\mathbb{R}}
\newcommand{\Z}{\mathbb{Z}}
\DeclareMathOperator{\Aut}{Aut}
\DeclareMathOperator{\Cov}{Cov}
\DeclareMathOperator{\Li}{Li}
\DeclareMathOperator{\Var}{Var}
\numberwithin{figure}{section}
\title{Refinements on vertical Sato-Tate}
\author{Zhao Yu Ma}
\address{Department of Mathematics, Massachusetts Institute of Technology, 
Cambridge, MA, USA}
\email{zhaoyuma@mit.edu}
\begin{document}
\maketitle

\begin{abstract}
Vertical Sato-Tate states that the Frobenius trace of a randomly chosen elliptic curve over $\F_p$ tends to a semicircular distribution as $p\rightarrow \infty$. We go beyond this statement by considering the number of elliptic curves $N_{t,p}'$ with a given trace $t$ over $\F_p$ and characterizing the 2-dimensional distribution of $(t,N_{t,p}')$. In particular, this gives the distribution of the size of isogeny classes of elliptic curves over $\F_p$. Furthermore, we show a notion of stronger convergence for vertical Sato-Tate which states that the number of elliptic curves with Frobenius trace in an interval of length $p^{\epsilon}$ converges to the expected amount. The key step in the proof is to truncate Gekeler's infinite product formula, which relies crucially on an effective Chebotarev's density theorem that was recently developed by Pierce, Turnage-Butterbaugh and Wood. 
\end{abstract}


\section{Introduction} 
Given an elliptic curve $E/\Q$, let $E_p/\F_p$ be the reduction of $E$ modulo a prime $p$. Assuming good reduction at $p$, we define the Frobenius trace to be $t_p(E)\coloneqq p+1 - \# E_p(\F_p)$ and the normalized Frobenius trace to be $x_p(E) = t_p(E)/\sqrt{p}$. By the Hasse bound, we have $x_p(E)\in [-2,2]$. For a fixed $E/\Q$, the Sato-Tate conjecture tells us how the normalized Frobenius traces $\{x_p(E)\}_{p}$ is distributed across primes $p$. Here we fixed an elliptic curve $E$ and varied the prime $p$, which is usually known as the \textit{horizontal} aspect of Sato-Tate. The Sato-Tate conjecture was proven by Barnet-Lamb, Geraghty, Harris and Taylor in \cite{sato-tate}.

On the other hand, one could do the opposite -- fix $p$ and consider all elliptic curves $E/\F_p$. This is known as the \textit{vertical} aspect of Sato-Tate, which we will be focusing on in this paper. For each $p$, let $\mu_p$ be the distribution of $x_p(E)$ where $E$ is picked uniformly at random among all (isomorphism classes of) elliptic curves over $\F_p$. Birch computed the moments of $\mu_p$ and showed that the distributions $\mu_p$ converge as $p\rightarrow \infty$:
\begin{theorem}[Vertical Sato-Tate, \cite{Birch1968HowTN}]\label{thm: vertical sato-tate}
Define $\mu$ to be the semicircular distribution supported on $[-2,2]$, i.e. $\mu$ has probabilistic mass function $\frac{1}{2\pi}\sqrt{4-x^2}$. Then, $\mu_p$ converges in distribution to $\mu$ as $p\rightarrow \infty$. Equivalently, for every subinterval $[a,b]$ of $[-2,2]$, we have 
\begin{equation}\label{eqn: integral form vertical sato-tate}
\lim_{p\rightarrow \infty} \frac{\#\{E/\F_p \mid t_p(E)\in [a\sqrt p, b\sqrt p]\}}{\#\{E/\F_p\}} = \frac{1}{2\pi} \int_a^b \sqrt{4-x^2}dx.
\end{equation}
\end{theorem}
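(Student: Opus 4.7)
The plan is to apply the method of moments. Since every $\mu_p$ is supported on $[-2, 2]$ by the Hasse bound, and $\mu$ is uniquely determined by its moments, convergence in distribution reduces to showing that for each $k \ge 0$,
$$
M_k(p) \coloneqq \int x^k\, d\mu_p(x) = \frac{1}{p^{k/2} \cdot \#\{E/\F_p\}} \sum_{|t| < 2\sqrt p} t^k\, N_{t,p}
$$
converges to the $k$-th moment of $\mu$, where $N_{t,p} \coloneqq \#\{E/\F_p : t_p(E) = t\}$. This target vanishes for odd $k$ by symmetry and equals the Catalan number $C_{k/2} = \tfrac{1}{k/2+1}\binom{k}{k/2}$ for even $k$.

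The next ingredient would be Deuring's theorem, which, after the standard identification (with automorphism weights), gives $N_{t,p} = H(4p - t^2)$ for $|t| < 2\sqrt p$ with $\gcd(t, p) = 1$, where $H$ denotes the Hurwitz--Kronecker class number; the classical class-number relation $\sum_{|t| < 2\sqrt p} H(4p - t^2) = 2p$ pins down the denominator. The few boundary and supersingular cases contribute $O_k(p^{(k-1)/2})$ in the numerator and are absorbed into the error. The problem reduces to the asymptotic evaluation of
$$
S_k(p) \coloneqq \sum_{|t| < 2\sqrt p} t^k\, H(4p - t^2).
$$
Odd $S_k(p)$ vanish identically by $t \leftrightarrow -t$. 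For even $k = 2m$, the heuristic pointwise average $H(D) \approx \sqrt D / \pi$ (calibrated by the mass formula) together with the substitution $t = 2\sqrt p\, u$ predicts
$$
S_{2m}(p) \sim \frac{1}{\pi}\int_{-2\sqrt p}^{2\sqrt p} t^{2m} \sqrt{4p - t^2}\, dt = 2 C_m\, p^{m+1},
$$
and dividing by $p^m \cdot 2p$ yields exactly $C_m$.

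The technical heart of the proof is justifying this heuristic despite the pointwise irregularity of $H(4p - t^2)$. The standard tool is the Eichler--Selberg trace formula, which for each even $j \ge 2$ asserts
$$
\sum_{|t| < 2\sqrt p} P_j(t, p)\, H(4p - t^2) = -2\,\Tr\bigl(T_p \mid S_j(\SL_2(\Z))\bigr) + E_j(p),
$$
where $P_j(t, p) = (\rho^{j-1} - \bar\rho^{j-1})/(\rho - \bar\rho)$ with $\rho + \bar\rho = t$, $\rho\bar\rho = p$, and $E_j(p)$ is an explicit elementary expression --- specifically $E_2(p) = 2p$ (encoding the Eisenstein contribution, since $\dim S_2(\SL_2(\Z)) = 0$) and $E_j(p) = -2$ for even $j \ge 4$. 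Since $P_j(t, p)$ has degree $j - 2$ in $t$, the Chebyshev-like recurrence $P_{j+1} = t\, P_j - p\, P_{j-1}$ yields the expansion
$$
t^{2m} = C_m\, p^m\, P_2(t, p) + \sum_{j = 4, 6, \ldots, 2m+2} a_{2m, j}(p)\, P_j(t, p),
$$
in which the coefficient of $P_2$ is exactly $C_m p^m$ --- a classical Chebyshev-polynomial identity that is the combinatorial source of the Catalan number in the limit. Summing the trace identities produces $S_{2m}(p) = C_m p^m \cdot E_2(p) + O_m(p^{m+1/2}) = 2 C_m p^{m+1} + O_m(p^{m+1/2})$, where the error is controlled via Deligne's Ramanujan bound $|\Tr(T_p \mid S_j)| \le 2 \dim S_j \cdot p^{(j-1)/2}$ together with the fact that $a_{2m, j}(p)$ is a polynomial in $p$ of degree $m - (j-2)/2$. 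Dividing by $p^m \cdot 2p$ yields $M_{2m}(p) \to C_m$, completing the proof.
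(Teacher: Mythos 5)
Your proposal is correct and is precisely the method-of-moments argument via the Eichler--Selberg trace formula that the paper attributes to Birch; the paper itself does not re-prove Theorem \ref{thm: vertical sato-tate} but cites Birch, noting explicitly that his proof proceeds by computing moments. One small historical remark: Birch (1968) predates Deligne's Ramanujan bound, but any pre-Deligne estimate $\Tr(T_p \mid S_j) = O(p^{j/2-\delta})$ already renders the cusp-form contribution $o(p^{m+1})$, so the argument goes through unchanged with the bounds available to Birch.
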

Visually, for each fixed $p$, we can plot a histogram of $x_p(E)$ over all $E/\F_p$ with a constant number of bins, as in Figure \ref{fig: constant bins}. Theorem \ref{thm: vertical sato-tate} is equivalent to the fact that the histograms converges uniformly as $p\rightarrow \infty$, as we can see in the figure. A natural question to ask is the following.
\begin{question}\label{question}
Will there still be such a convergence if the number of bins in the histogram increases with $p$?
\end{question}
\begin{figure}
    \centering
    \includegraphics[scale = 0.5]{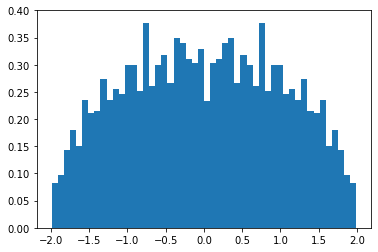}
    \includegraphics[scale = 0.5]{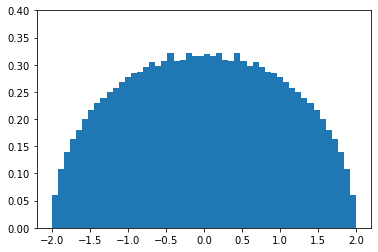}
    \caption{Histogram of $x_p(E)$ over all $E/\F_p$ for $p=10007$ and $p=1000003$ respectively, each with $50$ bins. We observe better convergence to a semicircular distribution as $p\rightarrow \infty$.}
    \label{fig: constant bins}
\end{figure}
To address this question, we first consider the extreme case where there are $4\sqrt{p}$ bins, one for each unique Frobenius trace. Let $N_{t,p}'$ be the number of elliptic curves $E/\F_p$ with Frobenius trace $t_p(E)=t$, so the $t$-th bin has a count of $N_{t,p}'$ elliptic curves. We view this histogram via the scatter plot of the normalized points $(t/\sqrt p, N_{t,p}'/2\sqrt p)$ for $t\in [-2\sqrt{p},2\sqrt{p}]$ as seen in Figure \ref{fig: scatter plot}. Visually, there is no convergence to a semicircular distribution. Indeed, \cite{refinements} showed that $N_{t,p}'/2\sqrt{p}$ can be arbitrarily large, implying that there is no convergence.

\begin{figure}
    \centering
    \includegraphics[scale = 0.5]{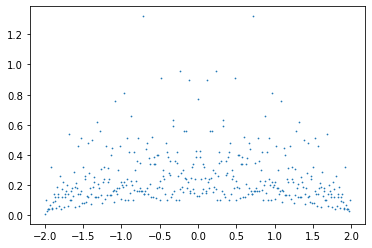}
    \includegraphics[scale = 0.5]{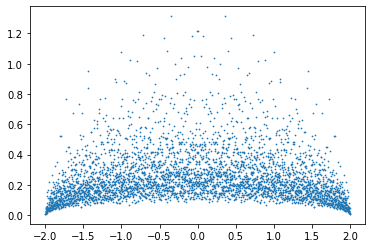}
    \caption{Scatter plot of the normalized points $(t/\sqrt p, N_{t,p}'/2\sqrt p)$ for $t\in [-2\sqrt{p},2\sqrt{p}]$ for $p=10007$ and $p=1000003$ respectively. This reflects the histogram with $4\sqrt p$ bins. We observe no convergence to a semicircular distribution as $p\rightarrow \infty$.}
    \label{fig: scatter plot}
\end{figure}

In this paper, we go further than the results in \cite{refinements} by estimating the two-dimensional distribution of the scatter plot $(t/\sqrt p, N_{t,p}'/2\sqrt p)$ for each prime $p$. There is a simple heuristic for this which we explain here briefly (see Section \ref{sec: heuristics 1} for more details), stemming from Gekeler's infinite product formula \cite{Gekeler2003FrobeniusDO},
$$N_{t,p}' \approx \frac{\sqrt{4p-t^2}}{\pi}\prod_{l \text{ prime}}v_l(t^2-4p),$$
where the factors $v_l$ are defined in Equation \eqref{eqn: v_l original}. Let $Y_{l,p}$ be the random variable of $v_l(t^2-4p)$ where $t$ is a ``random integer" (see Definition \ref{defn: Y_l,p}). We heuristically expect that $v_l(t^2-4p)$ are independent for each $l$, so let us define the error term $Z_{p}$ as the infinite independent product of the $Y_{l,p}$ (see Proposition \ref{prop: def Z_p}). Then, we expect
$$N_{t,p}' \sim \frac{\sqrt{4p-t^2}}{\pi}Z_p.$$
It is worth noting that we can compute the explicit distribution of $Y_{l,p}$ as in Lemma \ref{lem: Y_l,p explicit distributions}, and thus $Z_p$ is computable as well, albeit as an infinite product. Thus, it follows that if we pick a random trace $t\in [-2\sqrt p,2\sqrt p]$ we should heuristically expect that
$$(t/\sqrt{p}, N_{t,p}'/2\sqrt{p}) \sim (X,\frac{1}{2\pi}\sqrt{4-X^2}Z_p)$$
where $X$ is the uniform random variable on $[-2,2]$, and furthermore $X$ and $Z_p$ are assumed to be independent. Our first main result shows that this heuristic estimate is correct in the $p\rightarrow \infty$ limit:
\begin{theorem}(2D vertical Sato-Tate)\label{thm: 2D vertical sato-tate}
Define $\rho_p^{heu}$ to be the joint distribution of $(X,\frac{1}{2\pi}\sqrt{4-X^2}Z_p)$, where $X$ is the uniform random variable on $[-2,2]$, and additionally $X$ and $Z_p$ are independent random variables. Define $\rho_p$ to be the joint distribution of $(t/\sqrt p, N_{t,p}'/2\sqrt p)$ where $t\in [-2\sqrt{p},2\sqrt{p}]$ is chosen randomly. Then
$$\lim_{p\rightarrow \infty} \pi(\rho_p,\rho_p^{heu}) = 0$$
where $\pi$ is the Lévy–Prokhorov metric defined in Definition \ref{def: levy-prokhorov}.
\end{theorem}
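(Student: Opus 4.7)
The strategy is to start from Gekeler's infinite product formula, which expresses, for ordinary traces $t$,
$$N'_{t,p} = \frac{\sqrt{4p-t^2}}{\pi}\prod_{l \text{ prime}} v_l(t^2-4p)$$
(the finitely many supersingular traces contribute a vanishing fraction as $p\to\infty$). Dividing by $2\sqrt p$ gives
$$\frac{N'_{t,p}}{2\sqrt p} = \frac{\sqrt{4-(t/\sqrt p)^2}}{2\pi}\prod_l v_l(t^2-4p),$$
which already matches the shape of the heuristic density: the outer factor $\sqrt{4-X^2}/(2\pi)$ appears in $\rho_p^{heu}$ as well, and the first-coordinate marginals of both $\rho_p$ and $\rho_p^{heu}$ are uniform on $[-2,2]$ in the limit. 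Hence the theorem reduces to showing that, for uniformly chosen $t\in[-2\sqrt p,2\sqrt p]\cap\Z$, the joint law of $(t/\sqrt p,\prod_l v_l(t^2-4p))$ converges in Lévy--Prokhorov distance to the law of $(X,Z_p)$ with $X$ independent of $Z_p$.

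To handle the infinite product I would introduce a slowly growing truncation $L=L(p)\to\infty$ chosen so that the modulus $M$ governing $\Pi^{\leq L}(t) \coloneqq \prod_{l\leq L} v_l(t^2-4p)$ satisfies $M = o(\sqrt p)$. Then $t \bmod M$ is essentially uniformly distributed, while a shift of $t$ by $M$ moves $t/\sqrt p$ by only $o(1)$; thus the ``small-scale'' statistic $\Pi^{\leq L}(t)$ and the ``large-scale'' statistic $t/\sqrt p$ decouple. Combined with the Chinese Remainder Theorem applied to the distinct primes $l\leq L$, this yields
$$(t/\sqrt p,\,\Pi^{\leq L}(t)) \stackrel{d}{\longrightarrow} \Bigl(X,\,\prod_{l\leq L} Y_{l,p}\Bigr)$$
with $X$ independent of the truncated product and the $Y_{l,p}$ mutually independent, matching the corresponding truncated marginal of $\rho_p^{heu}$.

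The main obstacle is controlling the two tails $\Pi^{>L}(t)$ and $\prod_{l>L} Y_{l,p}$. On the heuristic side, a second-moment computation shows that $\sum_{l>L} \log Y_{l,p}$ has mean and variance $O(1/L)$: the first-order $\pm 1/l$ contributions in $\log v_l$ cancel once the Kronecker symbol is averaged, so by Chebyshev $\prod_{l>L} Y_{l,p} \to 1$ in probability as $L\to\infty$. On the arithmetic side we need the analogous statement for $\Pi^{>L}(t)$, but CRT is no longer available because $l$ ranges past any modulus comparable to $\sqrt p$. Writing $\log v_l(t^2-4p)$ in terms of its leading contribution expresses the tail (up to absolutely convergent errors) as $\sum_{l>L} \chi_{t^2-4p}(l)/l$, and showing this is small for most $t$ demands uniform bounds on partial Euler products of the family of quadratic characters $\chi_{t^2-4p}$, with power-saving control on the exceptional set. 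This is precisely the input supplied by the effective Chebotarev density theorem of Pierce, Turnage-Butterbaugh and Wood, whose sharp control on the exceptional families of quadratic fields is strong enough to allow $L=L(p)$ to grow with $p$. Once both tails are shown to vanish in probability uniformly, a standard three-$\epsilon$ argument combines the truncated equidistribution with the tail bounds to conclude $\pi(\rho_p,\rho_p^{heu})\to 0$.
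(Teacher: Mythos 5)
Your proposal follows the broad architecture of the paper's proof — truncate Gekeler's product, invoke the effective Chebotarev theorem of Pierce, Turnage-Butterbaugh and Wood to control the far tail, decouple the remaining finite product from the coarse variable $t/\sqrt p$ via periodicity and CRT, and finish with a slicing argument. However, there is a genuine gap in how you bridge the two scales, and it is precisely the hardest part of the argument.

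You truncate at a single cutoff $L = L(p)$ chosen so that the modulus $M$ of $\Pi^{\le L}$ is $o(\sqrt p)$, and then appeal to effective Chebotarev to kill the tail $\Pi^{>L}$. But the PTW theorem (and the GRH version) only gives cancellation in $\sum_l \chi_{\Delta}(l)/l$ starting at $l \ge g_\kappa(p) = \exp(\kappa(\log\log p)^{5/3}(\log\log\log p)^{1/3})$ (or $(\log p)^{2+\epsilon}$ under GRH). If $L$ is small enough that $\prod_{l < L} l^{2k} = o(\sqrt p)$, then $L$ is far below this threshold, and Chebotarev says nothing about the intermediate range $L \le l < g_\kappa(p)$. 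Conversely, if you push $L$ up to $g_\kappa(p)$, the modulus explodes well past $\sqrt p$ and the decoupling collapses. The paper resolves this with a two-stage truncation: first cut at $l_0 \approx g_\kappa(p)$ using effective Chebotarev (Propositions \ref{prop: first step}, \ref{prop: second step}), and then a genuinely separate argument (Proposition \ref{prop: chebyshev result}) cuts from $l_0$ down to a much smaller $l_1$. That intermediate step replaces the unavailable full independence by \emph{pairwise} independence: for $l, l' < l_0$ the period $(ll')^{2k}$ is still $\ll \sqrt p$, so covariances of $\log v_{l,k}$ and $\log v_{l',k}$ are small, and a second-moment (Chebyshev) bound shows $\sum_{l_1 \le l < l_0} \log v_{l,k}(\Delta) = o(1)$ for almost all $t$. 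Your sketch contains a second-moment computation only on the heuristic ($Y_{l,p}$) side, where independence is free; the arithmetic analogue over the intermediate range is exactly what is missing from your proposal and cannot be replaced by Chebotarev alone.

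A smaller but still necessary technical point: $v_l(\Delta)$ is not literally periodic in $t$ (it depends on the full $l$-adic valuation, which is unbounded), so the CRT decoupling you invoke does not apply directly. The paper first replaces $v_l$ by the truncated $v_{l,k}$, which is exactly periodic modulo $l^{2k}$, and controls the resulting error $O(l^{-k})$ by letting $k \to \infty$ slowly. Without that approximation your ``$t \bmod M$ is essentially uniformly distributed'' step does not make sense, since there is no finite $M$ determining $\Pi^{\le L}(t)$.
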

The statement of this theorem may seem rather technical, and so we will attempt to give a more intuitive explanation via an analogy. Suppose we have two sequence of real numbers $a_i = (\pm 1)^n + \frac 1 n$, and $b_i = (\pm 1)^n$. We see that $\lim_{i\rightarrow \infty} d(a_i - b_i) = 0$, so $a_i$ and $b_i$ ``converges" as $i\rightarrow \infty$. Informally, we can say that ``$a_i\rightarrow b_i$" as $i\rightarrow \infty$. In our context, convergence in the Lévy–Prokhorov metric on $\R^2$ is equivalent to convergence in distribution for random variables (i.e. weak convergence of distributions). In the same way as the case for real numbers, the conclusion of Theorem \ref{thm: 2D vertical sato-tate} that $\lim_{p\rightarrow \infty} \pi(\rho_p,\rho_p^{heu})=0$ means that the actual distribution $\rho_p$ and the heuristic distribution $\rho_p^{heu}$ ``converges" in distribution. Informally, we can say that ``$\rho_p\rightarrow \rho_p^{heu}$" in distribution as $p\rightarrow \infty$ just as in the analogous case in real numbers. 

What we would have really liked to say was that the distributions $\rho_p$ converge in distribution to some fixed 2D distribution as $p\rightarrow \infty$. This seems visually plausible given the heat maps of the normalized points $(t/\sqrt p, N_{t,p}'/2\sqrt p)$ in Figure \ref{fig: heat maps}. Unfortunately, that is not the case in reality, and this resulted in a more technical looking statement for Theorem \ref{thm: 2D vertical sato-tate}. To explain this, our result in Theorem \ref{thm: 2D vertical sato-tate} tells us that $\rho_p$ converges if and only if the heuristic distributions $\rho_p^{heu}$ converge. By definition of $\rho_p^{heu}$, we see that this happens if and only if $Z_p$ converges. However, the distributions of $Z_p$ differ very slightly from prime to prime, and one can show that $Z_p$ does not converge in distribution as $p\rightarrow \infty$. Thus, surprisingly, the distributions $\rho_p$ do not actually converge to a fixed 2D distribution. In fact, the distributions are so similar that it is difficult to ascertain the lack of convergence by looking at figures with the naked eye.

Instead, we give a sufficient condition on sequences of primes $p_i$ where $\rho_{p_i}$ converges. By the same argument above, $\rho_{p_i}$ converges if and only if $Z_{p_i}$ converges. The distribution of $Z_p$ depends only on the values of the Legendre symbol $(\frac p l)$ over all odd primes $l$ and the value of $p$ modulo $8$. Furthermore, the bigger the prime $l$, the less $Z_p$ will be affected by the value of $(\frac p l)$. From these observations, we can derive the following corollary.
\begin{figure}
    \centering
    \includegraphics[scale = 0.5]{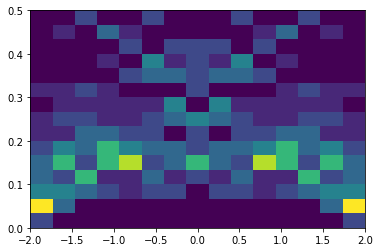}
    \includegraphics[scale = 0.5]{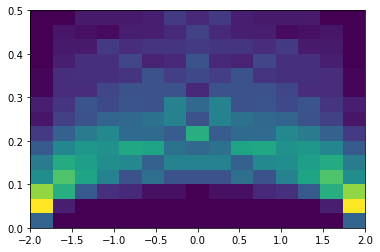}
    \caption{Heat map of the normalized points $(t/\sqrt p, N_{t,p}'/2\sqrt p)$ for $t\in [-2\sqrt{p},2\sqrt{p}]$ for $p=10007$ and $p=1000003$ respectively. We observe that the points $(t/\sqrt p, N_{t,p}'/2\sqrt p)$ seem to converge to a 2D distribution. However, this is only true for certain sequences of primes by Corollary \ref{cor: prime sequence converge}.}
    \label{fig: heat maps}
\end{figure}
\begin{corollary}\label{cor: prime sequence converge}
Let $p_1,p_2,\ldots$ be an increasing sequence of primes. Then $\rho_{p_i}$ converges in distribution if for each odd prime $l$ the Legendre symbols $(\frac {p_i} l)$ are eventually constant, and additionally one of the following is true: (1) $p_i$ is eventually $3 \pmod 4$, (2) $p_i$ is eventually $1 \pmod 8$, or (3) $p_i$ is eventually $5 \pmod 8$.
\end{corollary}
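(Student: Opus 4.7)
The plan is to reduce the convergence of $\rho_{p_i}$ to that of the random variable $Z_{p_i}$ via Theorem \ref{thm: 2D vertical sato-tate}, then to verify convergence of $Z_{p_i}$ by showing that each factor $Y_{l,p_i}$ in the infinite independent product defining it eventually has a constant distribution, and finally to pass from finite truncations to the full product via a uniform tail estimate.

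First, Theorem \ref{thm: 2D vertical sato-tate} applied along the subsequence $\{p_i\}$ gives $\pi(\rho_{p_i},\rho_{p_i}^{heu}) \to 0$; since the L\'evy--Prokhorov metric metrizes weak convergence, $\rho_{p_i}$ converges in distribution if and only if $\rho_{p_i}^{heu}$ does. Because $\rho_p^{heu}$ is the law of $(X,\tfrac{1}{2\pi}\sqrt{4-X^2}\,Z_p)$ with $X\sim \mathrm{Unif}[-2,2]$ independent of $Z_p$ and with distribution independent of $p$, the continuous mapping theorem further reduces the task to proving that $Z_{p_i}$ converges in distribution.

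Next, I would invoke Lemma \ref{lem: Y_l,p explicit distributions} to see that for odd $l$ the distribution of $Y_{l,p}$ depends on $p$ only through $(\tfrac{p}{l})$, while for $l=2$ it depends only on $p\bmod 8$. Under the hypothesis that the Legendre symbols $(\tfrac{p_i}{l})$ are eventually constant for every odd $l$, each $Y_{l,p_i}$ with $l$ odd is eventually distributed as some fixed $\tilde Y_l$. For the prime $l=2$, conditions (2) and (3) immediately force $p_i\bmod 8$ to be eventually constant. For condition (1), I would verify directly from the explicit formula in Lemma \ref{lem: Y_l,p explicit distributions} that when $p\equiv 3\pmod 4$ the distribution of $Y_{2,p}$ depends only on $p\bmod 4$ and is insensitive to whether $p\equiv 3$ or $7\pmod 8$; hence condition (1) also pins down the distribution of $Y_{2,p_i}$ eventually, to some fixed $\tilde Y_2$.

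It remains to exchange the limits in the infinite product $Z_{p_i} = \prod_l Y_{l,p_i}$. For each fixed truncation level $L$, the finite product $Z_{p_i}^{(L)} := \prod_{l\leq L} Y_{l,p_i}$ converges in distribution to $\prod_{l\leq L} \tilde Y_l$ by the above. To upgrade this to convergence of $Z_{p_i}$ itself, what is needed is a tail estimate showing that $Z_p^{(L)} \to Z_p$ in probability as $L\to\infty$ uniformly in $p$; this uniform-in-$p$ control is the technical heart of the argument and the point I expect to be the main obstacle. Such a bound should follow from appropriate uniform moment estimates on the $Y_{l,p}$ (of the sort already implicit in the well-definedness of $Z_p$ in Proposition \ref{prop: def Z_p}, e.g.\ $\E[Y_{l,p}]=1$ together with a summable bound on $\Var(Y_{l,p})$). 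Given such a tail bound, a standard three-$\varepsilon$ argument yields $Z_{p_i} \to \prod_l \tilde Y_l$ in distribution, and hence $\rho_{p_i}$ converges, as claimed.
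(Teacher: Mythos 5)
Your proposal is correct and follows essentially the same route as the paper: reduce to convergence of $Z_{p_i}$ via Theorem~\ref{thm: 2D vertical sato-tate} (the paper packages this as Corollary~\ref{cor: converge equivalence lemma}), note from Lemma~\ref{lem: Y_l,p explicit distributions} that each factor's distribution is eventually fixed (the observation that $Y_{2,p}$ for $p\equiv 3\pmod 4$ does not depend on $p\bmod 8$ is precisely why condition (1) suffices), and pass from finite truncations to the infinite product using a uniform tail bound. The ``uniform-in-$p$ control'' you flag as the technical heart is supplied in the paper by the moment bounds of Lemma~\ref{lem: Y_l,p expectation variance} together with Corollary~\ref{cor: variance metric}, carried out in Proposition~\ref{prop: Z_p_i converges}.
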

We conjecture that the converse of Corollary \ref{cor: prime sequence converge} is true. We could not prove this, but we reduced this to proving that a family of infinite products of distributions are distinct, see Section \ref{sec: convergent subsequences} for more details. Furthermore, by projecting onto the second component in Theorem \ref{thm: 2D vertical sato-tate}, we obtain results on the distribution of the size of isogeny classes of elliptic curves over a fixed prime field in the $p\rightarrow \infty$ limit, see Corollary \ref{cor: isogeny classes}.

Theorem \ref{thm: 2D vertical sato-tate} does not follow easily from the heuristic argument given above. There are two main difficulties in proving Theorem \ref{thm: 2D vertical sato-tate} (1) there are infinitely many factors $v_l(t^2-4p)$ in the formula for $N_{t,p}'$ and (2) showing independence between the factors $v_l(t^2-4p)$. The key step in the proof is to show that Gekeler's infinite product formula can be truncated and still give a good approximation of $N_{t,p}'$. This uses an effective Chebotarev's density theorem developed recently by \cite{PTW20}. From this, we obtain the truncation formulas Proposition \ref{prop: first step}, \ref{prop: second step} and \ref{prop: chebyshev result}. 

Now we discuss the second main result in this paper. We answered Question \ref{question} in the negative -- although vertical Sato-Tate tells us that the histograms with a constant number of bins each of size $\Theta(\sqrt p)$ converges uniformly to a semicircular distribution, Theorem \ref{thm: 2D vertical sato-tate} tells us that the histograms with $4\sqrt p$ bins each with a single trace does not converge to a semicircular distribution. Then, the natural followup question to ask would be what the threshold for convergence is. More precisely, we ask for what functions $f(p)=o(\sqrt p)$ does
\begin{equation}\label{eqn: threshold question}
\sup_{-2\sqrt p\le t\le 2\sqrt{p}-f(p)}\left| \frac{\#\{E/\F_p \mid t_p(E)\in \left[t,t+f(p)\right)\}}{\frac{f(p)}{\sqrt p}\#\{E/\F_p\}} - \frac{1}{2\pi}\sqrt{4-(t/\sqrt p)^2}\right|\rightarrow 0
\end{equation}
as $p\rightarrow \infty$? 

The above equation corresponds to uniform convergence of the histogram with $4\sqrt{p}/f(p)$ bins each of size $f(p)$ to a semicircular distribution. Our second main result shows that there is still convergence even when the bin size $f(p)$ grows much slower than $\sqrt p$. This implies a stronger convergence result in comparison to the original vertical Sato-Tate Theorem \ref{thm: vertical sato-tate}.
\begin{theorem}(Strong vertical Sato-Tate)\label{thm: stronger vertical sato-tate}
Equation \eqref{eqn: threshold question} is true if there exists some $\epsilon >0$ where $f(p)\ge p^\epsilon$ for all sufficiently large $p$. If we assume the Generalized Riemann Hypothesis (GRH), Equation \eqref{eqn: threshold question} is true if for every constant $C>0$, $f(p)\ge \exp(C(\log\log(p))^2)$ for all sufficiently large $p$.
\end{theorem}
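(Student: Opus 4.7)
The plan is to reduce the theorem to an interval-averaging problem via Gekeler's infinite product formula, truncate that product using the effective Chebotarev bounds of Pierce-Turnage-Butterbaugh-Wood, and then exploit periodicity of the truncated product to control sums over intervals of length $f(p)$.

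First I would rewrite the count in the numerator of \eqref{eqn: threshold question} using Gekeler's formula:
\[
\#\{E/\F_p \mid t_p(E)\in [t,t+f(p))\} \;\approx\; \sum_{s \in [t,t+f(p))} \frac{\sqrt{4p-s^2}}{\pi}\prod_{l}v_l(s^2-4p).
\]
Since $f(p) = o(\sqrt p)$, the density factor $\sqrt{4p-s^2}/\pi$ is nearly constant across the interval away from the Hasse boundary, so pulling it out as $\sqrt{4p-t^2}/\pi$ reduces the task to showing
\[
\frac{1}{f(p)}\sum_{s\in [t,t+f(p))}\prod_l v_l(s^2-4p) = 1 + o(1)
\]
uniformly in the bulk range of $t$. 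The boundary region $|t|$ near $2\sqrt p$, where this reduction breaks down, would be handled separately by a direct upper bound on the count using truncation, since there the target density is itself small.

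Next I would truncate the product at a cutoff $L = L(p)$, replacing the infinite product by $W_L(s) := \prod_{l\le L} v_l(s^2-4p)$. The truncation propositions \ref{prop: first step}, \ref{prop: second step}, and \ref{prop: chebyshev result}, which rest on the effective Chebotarev density theorem of Pierce-Turnage-Butterbaugh-Wood, give bounds on the discrepancy between $N_{s,p}'$ and the truncated Gekeler estimate strong enough to survive averaging over windows of length $f(p)$. Unconditionally, these bounds permit $L$ to be at most a small power of $p$; under GRH they permit $L$ to be only polylogarithmic in $p$, and the dichotomy in Theorem \ref{thm: stronger vertical sato-tate} between the thresholds $f(p) \ge p^\epsilon$ and $f(p) \ge \exp(C(\log\log p)^2)$ reflects exactly this difference.

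Finally, $W_L(s)$ depends only on $s$ modulo some modulus $M_L$ built from small powers of primes up to $L$. Splitting $[t, t+f(p))$ into complete residue classes mod $M_L$ plus a tail of length less than $M_L$ gives
\[
\sum_{s \in [t,t+f(p))} W_L(s) = \frac{f(p)}{M_L}\sum_{s \bmod M_L} W_L(s) + O\bigl(M_L\,\|W_L\|_\infty\bigr),
\]
and the full-period average equals $1+o(1)$ because the random variables $Y_{l,p}$ have mean close to $1$ individually and the Gekeler prediction must be consistent with the total $\sum_t N_{t,p}' = \#\{E/\F_p\}$. The main obstacle, which is precisely where the thresholds on $f(p)$ come from, is balancing the truncation error (which pushes $L$ upward) against the equidistribution error $M_L\,\|W_L\|_\infty$ (which pushes $L$ downward), while maintaining uniformity in $t$.
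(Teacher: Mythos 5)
The key gap is in your final step, where you split $[t,t+f(p))$ into complete residue classes mod $M_L$ and absorb the tail as $O(M_L\,\|W_L\|_\infty)$. For this to be an admissible error you need $M_L = o(f(p))$, but the truncated modulus is $M_L = \prod_{l\le L}l^{2k}\approx e^{2kL}$, and the truncation point $L$ must itself be at least $l_0$ to make the tail $\prod_{l>L}v_l(\Delta)$ negligible. Under GRH that forces $L\gtrsim(\log p)^{2+\epsilon}$, hence $M_L\gtrsim \exp\bigl(2k(\log p)^{2+\epsilon}\bigr)$, which is astronomically larger than the stated threshold $f(p)\ge\exp(C(\log\log p)^2)$; unconditionally $L\gtrsim g_\kappa(p)$ eventually exceeds $\log p$, so $M_L\gg p$ while you only have $f(p)\ge p^\epsilon$. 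The paper explicitly flags this obstruction (``$T\gg\sqrt p$ which is too big'') and your proposal is precisely the naive route that fails. The threshold dichotomy in the theorem does \emph{not} come from balancing $L$ against $M_L$: the unconditional $p^\epsilon$ comes from the $O(p^\epsilon)$ bound on exceptional discriminants in the effective Chebotarev theorem, and the GRH threshold comes from a much smaller effective period, not from $M_L$.

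The missing idea is the paper's Taylor-expansion device. Rather than averaging $W_L(s)$ directly, the paper writes $\prod_{l<l_0}v_{l,k}(\Delta)=\sum_{n\ge 0}\frac{1}{n!}\bigl(\sum_{l<l_0}\log v_{l,k}(\Delta)\bigr)^n$ and estimates each moment separately. After expanding the $n$-th power, a typical term $\prod_i \log(v_{l_i,k}(\Delta))^{\alpha_i}$ with $\sum\alpha_i=n$ involves at most $n$ distinct primes, so it is periodic with period at most $l_0^{2kn}$ --- polynomially small in $l_0$ for bounded $n$, rather than exponentially large as with the full $M_L$. For small $n$ the interval $[t_0,t_0+f(p))$ then does contain many full periods; for large $n$ the $1/n!$ kills the trivial bound $O(C^n(\log l_0)^n)$. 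This is what makes the threshold $\exp(C'k(\log l_0)^2)$ appear. Finally, the paper evaluates the limiting full-period average via $\E[Y_{l,p}]=1$ for $l\ne p$ (Lemma \ref{lem: Y_l,p expectation variance}), not via the total-count consistency argument you sketched; the latter would circularly presuppose the uniformity you are trying to prove. You also need the $H(\Delta)=O(|\Delta|^{1/2+\epsilon})$ bound to dispose of the $O(p^\epsilon)$ exceptional traces in the unconditional case, which your proposal does not mention.
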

\begin{figure}
    \centering
    \includegraphics[scale = 0.5]{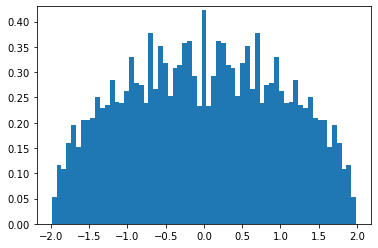}
    \includegraphics[scale = 0.5]{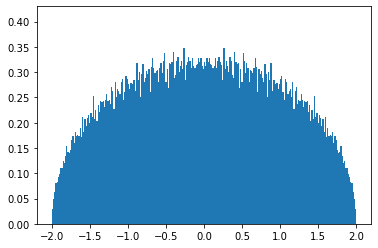}
    \caption{Histogram of $x_p(E)$ over all $E/\F_p$ for $p=10007$ and $p=1000003$ respectively. There are $4p^{0.3}$ bins, each counting the number of elliptic curves with Frobenius trace $t_p(E)$ within a $p^{0.2}$ range. As predicted by Theorem \ref{thm: stronger vertical sato-tate}, we observe convergence to a semicircular distribution.}
    \label{fig: changing bins}
\end{figure}
For the unconditional statement, $p^{\epsilon}$ comes from the statement of the effective Chebotarev's density theorem in \cite{PTW20} that there are at most $O(D^{\epsilon})$ exceptional fields with discriminant less than $D$. This is the only limiting factor -- a stronger effective Chebotarev's density theorem with fewer exceptional fields would improve this bound. When we assume GRH for the latter part of this theorem and throughout this paper, we actually only need the special case of GRH for the Dedekind zeta function of quadratic fields. 

For reference, we plot the histograms where each bin counts the number of elliptic curves within a range of $p^{0.2}$ Frobenius traces in Figure \ref{fig: changing bins}. It is worth noting that the method of proving vertical Sato-Tate in \cite{Birch1968HowTN} by computing moments does not extend to a proof of this stronger convergence. Our proof of Theorem \ref{thm: stronger vertical sato-tate} also stems from the same technique of truncating Gekeler's infinite product formula. A proof outline for both Theorem \ref{thm: 2D vertical sato-tate} and \ref{thm: stronger vertical sato-tate} is given in Section \ref{sec: proof overview}, and the full details of the proof are given in Sections \ref{sec: truncation formula} to \ref{sec: pf of stronger sato-tate}.

Lastly, we discuss possible extensions to higher genus curves as well as abelian varieties. The higher genus case was discussed in \cite{refinements}, and interestingly, the higher genus case exhibits a different behaviour from the genus $1$ case. Let $N_{t,p,g}'$ be the number of genus $g$ curves of Frobenius trace $t$ over $\F_p$. For $g\ge 3$, \cite{refinements} conjectured that each $N_{t,p,g}'$ converges to the corresponding Sato-Tate curve, and for $g=2$, there is convergence to two separate Sato-Tate curves depending on the parity of $t$. For abelian varieties, \cite{Achter_2023} proved a more general formula that computes the size of the isogeny class of principally polarized abelian varieties as an infinite product, which specializes to Gekeler's formula in the dimension $1$ case. It is likely that the methods in this paper can be generalized to principally polarized abelian varieties to prove similar distribution results, including results on the distribution of the size of isogeny classes of principally polarized abelian varieties.
\subsection{Acknowledgments} 
I would like to thank Andrew Sutherland for mentoring me for this project, and for providing valuable guidance and advice. I would also like to thank Jeff Achter, Aaron Landesman, Christophe
Ritzenthaler, Jesse Thorner, Jit Wu Yap and the anonymous referee for their helpful comments.
\section{Preliminaries} 
\subsection{Elliptic curve counts} It is more convenient to count elliptic curves $E/\F_p$ weighted according to $\frac{2}{\#\Aut_{\F_p}(E)}$, and we will do this from now on. Define 
$N_{t,p}$ to be the number of elliptic curves $E/\F_p$ with Frobenius trace $t_p(E)=t$ weighted according to $\frac{2}{\#\Aut_{\F_p}(E)}$. As noted in \cite{Gekeler2003FrobeniusDO}, for any prime $p$, there are at most 10 elliptic curves $E/\F_p$ with $\#\Aut_{\F_p}(E)\neq 2$. Thus, $N_{t,p}$ and $N_{t,p}'$ differ by at most a constant which becomes negligible for big $p$, so it suffices to consider the weighted counts $N_{t,p}$ instead of the actual counts $N_{t,p}'$. One can obtain the following expression for $N_{t,p}$.
\begin{proposition}[\cite{Gekeler2003FrobeniusDO}]\label{prop: N_t,p formula}
We have the formula
$$N_{t,p} = H(\Delta) = \frac{\sqrt{-\Delta}}{\pi}\prod_{l \text{ prime}}v_l(\Delta)$$
where $\Delta = t^2-4p$, $H(\Delta)$ is the Kronecker-Hurwitz class number and the weights $v_l(\Delta)$ are defined as
\begin{equation}\label{eqn: v_l original}
v_l(\Delta) = (1-l^{-2})^{-1}\left(1+l^{-1} + \begin{Bmatrix}
    0 \\
    -(l+1)l^{-\delta-2}\\
    -2l^{-\delta-1}
\end{Bmatrix}\right)\end{equation}
where $\delta$ is the largest non-negative integer such that $l^{2\delta}\mid \Delta$, and in the case $l=2$, we require additionally that $\Delta/2^{2\delta}\equiv 0,1 \mod{4}$. The curly brackets correspond to the cases where the Kronecker symbol $\left(\frac{\Delta/l^{2\delta}}{l}\right)$ is $+1,0,-1$. 
\end{proposition}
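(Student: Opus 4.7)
The plan is to prove the proposition in two independent stages: the first equality $N_{t,p}=H(\Delta)$ is Deuring's correspondence packaged as a mass formula, while the second equality is Gekeler's Euler product expansion of the Kronecker–Hurwitz class number.

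For the first equality, I would invoke Deuring's theorem in the ordinary case ($p\nmid t$): elliptic curves $E/\F_p$ whose Frobenius $\pi$ satisfies $\pi^2-t\pi+p=0$ are in bijection with pairs $(\calO,[\mathfrak{a}])$, where $\calO$ is an order in $K=\Q(\sqrt{\Delta})$ containing $\Z[\pi]$ and $[\mathfrak{a}]\in\Pic(\calO)$, with $\Aut_{\F_p}(E)\cong \calO^{\times}$. Summing the weights $2/\#\Aut_{\F_p}(E)$ over all such $E$ gives
$$N_{t,p} \;=\; \sum_{\Z[\pi]\subseteq \calO\subseteq \calO_K} \frac{2\,\#\Pic(\calO)}{\#\calO^{\times}} \;=\; H(\Delta),$$
which is the defining expression of the Kronecker–Hurwitz class number. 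For supersingular $t$ (when $p\mid t$), a parallel computation using Eichler's mass formula on maximal quaternion orders yields the same identity, as in Deuring's original treatment.

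For the second equality, write $\Delta=f^{2}D$ with $D$ the fundamental discriminant of $K$. Summing over conductor divisors $f'\mid f$ and invoking the Dirichlet analytic class number formula
$$\frac{2\,h(f'^{2}D)}{w(f'^{2}D)} \;=\; \frac{f'\sqrt{|D|}}{\pi}\, L(1,\chi_D)\prod_{l\mid f'}\bigl(1-\chi_D(l)l^{-1}\bigr)$$
gives
$$H(\Delta) \;=\; \frac{\sqrt{|D|}}{\pi}\,L(1,\chi_D)\sum_{f'\mid f} f'\prod_{l\mid f'}\bigl(1-\chi_D(l)l^{-1}\bigr).$$
I would then expand $L(1,\chi_D)=\prod_{l}(1-\chi_D(l)l^{-1})^{-1}$ and factorize the divisor sum as a product over primes $l\mid f$ of local sums indexed by $v_l(f')$. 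Collecting all contributions prime-by-prime produces an Euler product whose $l$-th factor depends only on $\delta_l=v_l(f)$ and on $\chi_D(l)=\bigl(\tfrac{\Delta/l^{2\delta_l}}{l}\bigr)$. A direct case analysis in the three cases $\chi_D(l)\in\{+1,-1,0\}$ then matches the expression \eqref{eqn: v_l original} term by term, and the prefactor rearranges as $\sqrt{|D|}\cdot f=\sqrt{-\Delta}$.

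The main obstacle is the prime $l=2$, where the congruence condition $\Delta/2^{2\delta}\equiv 0,1\pmod{4}$ is essential for correctly splitting $\Delta$ between the conductor $f$ and the fundamental discriminant $D$ (recall that fundamental discriminants are either $\equiv 1\pmod 4$ or of the form $4m$ with $m\equiv 2,3\pmod 4$). A case analysis on $D\bmod 8$ and on $v_2(f)$ is needed to verify that $v_2(\Delta)$ has the same functional form as $v_l(\Delta)$ for odd $l$. The exceptional small discriminants $D\in\{-3,-4\}$, where $\#\calO_K^{\times}>2$, are absorbed automatically by the weighted definition of $H(\Delta)$ and contribute negligibly in the asymptotic regime that concerns the rest of the paper.
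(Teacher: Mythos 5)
The paper does not prove this proposition at all; it simply cites \cite{Gekeler2003FrobeniusDO}, so there is no ``paper's own proof'' to compare against. Your reconstruction is nevertheless a legitimate and essentially standard derivation: Deuring's correspondence (together with the Eichler mass formula in the supersingular case $p\mid t$) gives $N_{t,p}=H(\Delta)$, and the Euler-product expansion of $H(\Delta)$ follows from Dirichlet's class number formula applied to each order $\calO_{f'}\supseteq\Z[\pi]$ and a divisor-sum-to-Euler-product rearrangement. I checked the bookkeeping: writing $\Delta=f^2 D$ with $D$ fundamental, the local factor at $l$ is $\bigl(1-\chi_D(l)l^{-1}\bigr)^{-1}\bigl(1+(1-\chi_D(l)l^{-1})\sum_{j=1}^{\delta_l}l^j\bigr)$ where $\delta_l=v_l(f)$; after extracting $l^{\delta_l}$ from each factor (these multiply to $f$, giving $\sqrt{|D|}\,f=\sqrt{-\Delta}$), the remaining expression matches \eqref{eqn: v_l original} case by case, and for odd $l$ one checks $\delta_l=v_l(f)$ coincides with ``largest $\delta$ with $l^{2\delta}\mid\Delta$,'' while for $l=2$ the extra condition $\Delta/4^\delta\equiv 0,1\pmod 4$ is exactly what forces $\delta=v_2(f)$ in the three subcases $D\equiv 1\pmod 4$, $D=4m$ with $m\equiv 3\pmod 4$, and $D=8m'$ with $m'$ odd. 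Two points are worth tightening before this counts as a complete proof: first, the assertion that the supersingular contribution ``yields the same identity'' deserves a precise statement (for $p\ge 5$ the only supersingular trace in $(-2\sqrt p,2\sqrt p)$ is $t=0$, and one must match the weighted supersingular count against $H(-4p)$ directly, which is Deuring's theorem and not merely Eichler's mass formula); second, the ``rearranges as $\sqrt{|D|}\cdot f$'' step is where the $l^{\delta_l}$ extraction lives and should be displayed explicitly, since otherwise the $\delta$-dependence of $v_l(\Delta)$ appears to come from nowhere. Your route through the class number formula is cleaner than, and logically independent of, Gekeler's original local-density/probabilistic argument, so this is a genuinely different (and arguably more classical) proof of the cited identity.
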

\subsection{Weights $v_l(\Delta)$}
From now on, let $\Delta = \Delta_p(t) \coloneqq t^2-4p$ be a function of $t$, so that $v_l(\Delta)$ is implicitly a function of $t$. The following facts follow directly from the definition.
\begin{lemma}\label{lem: v_l facts}
The following statements are true:
\begin{enumerate}[(\alph*)]
\item If $l^2 \nmid \Delta$, then $\delta =0$ so
$v_l(\Delta) = \left( 1 - \left( \frac{\Delta}{l}\right)/l\right)^{-1}$.
\item We have the bound 
$\left( 1 + \frac 1 l \right)^{-1} \le v_l(\Delta) \le \left( 1 - \frac 1 l \right)^{-1}$. Thus, we have $v_l(\Delta) = 1+O(l^{-1})$ and $\log(v_l(\Delta)) = O(l^{-1})$.
\end{enumerate}
\end{lemma}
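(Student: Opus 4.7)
The plan is to prove both parts by direct case analysis on the Kronecker symbol $\left(\frac{\Delta/l^{2\delta}}{l}\right)$, which appears in the definition of $v_l(\Delta)$ in Proposition \ref{prop: N_t,p formula}. Neither part requires anything beyond elementary manipulation of the closed-form expression.

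For part (a), I would first observe that $l^2 \nmid \Delta$ forces $\delta = 0$, and then substitute into the three cases. When $\left(\frac{\Delta}{l}\right) = +1$, the bracket contributes $0$, giving $v_l(\Delta) = (1-l^{-2})^{-1}(1+l^{-1}) = (1 - 1/l)^{-1}$. When $\left(\frac{\Delta}{l}\right) = 0$, the bracket contributes $-(l+1)l^{-2}$ and the inner factor collapses to $1-l^{-2}$, giving $v_l(\Delta) = 1$. When $\left(\frac{\Delta}{l}\right) = -1$, the bracket contributes $-2l^{-1}$, giving $v_l(\Delta) = (1-l^{-2})^{-1}(1-l^{-1}) = (1+1/l)^{-1}$. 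All three values match the asserted closed form $(1 - (\Delta/l)/l)^{-1}$.

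For part (b), my strategy is to show that $v_l(\Delta)$ is monotone increasing in $\delta$ in each of the three cases, so the extreme values are attained either at $\delta = 0$ or in the limit $\delta \to \infty$. In the $+1$ case, the expression has no dependence on $\delta$ and equals the upper bound $(1 - 1/l)^{-1}$. In the $0$ and $-1$ cases, the negative corrections $-(l+1)l^{-\delta-2}$ and $-2l^{-\delta-1}$ increase toward $0$ as $\delta$ grows, so $v_l(\Delta)$ increases monotonically to $(1-l^{-2})^{-1}(1+l^{-1}) = (1-1/l)^{-1}$. The global minimum $(1+1/l)^{-1}$ is attained at $\delta = 0$ in the $-1$ case, already computed in part (a). The asymptotic estimates then follow from the Taylor expansion $(1 \pm 1/l)^{-1} = 1 + O(1/l)$, which yields $v_l(\Delta) = 1 + O(l^{-1})$ and, after taking logs, $\log(v_l(\Delta)) = O(l^{-1})$.

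The only subtlety to keep in mind is that the Kronecker symbol in the definition is evaluated at $\Delta/l^{2\delta}$ rather than $\Delta$ itself, so in the monotonicity argument one must fix the value of the symbol before varying $\delta$. Beyond this bookkeeping I do not anticipate any real obstacle.
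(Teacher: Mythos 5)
Your proposal is correct and follows the same direct-verification-from-the-definition approach that the paper itself implicitly invokes (the paper simply remarks that the lemma "follows directly from the definition" and gives no further detail). Both the case-by-case evaluation at $\delta=0$ for part (a) and the monotonicity-in-$\delta$ argument for the bounds in part (b) are sound, and the remark about fixing the Kronecker symbol value before varying $\delta$ is exactly the right bookkeeping point to flag.
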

By definition, the value of $v_l(\Delta)$ only depends on the residues $\Delta$ modulo $l^{i}$ for each $i$. In other words, the function $v_l$ extends to a continuous function on $\Z_l$. This allows us to make the following important definition.
\begin{definition}\label{defn: Y_l,p}
Let $Y_{l,p}$ be the random variable with value $v_l(\Delta) = v_l(t^2-4p)$, where $t$ is chosen uniformly at random in $\Z_l$ according to its Haar measure.
\end{definition}
It is not difficult to explicitly calculate the distributions of each $Y_{l,p}$ by definition, and we state this in Lemma \ref{lem: Y_l,p explicit distributions} in the Appendix. We have the following important observations which follow directly from Lemma \ref{lem: Y_l,p explicit distributions}. The last statement of Lemma \ref{lem: Y_l,p expectation variance} that $\E[Y_{l,p}]=1$ for $p\neq l$ has also been proven in pages 9-10 of \cite{galbraith_mckee_2000}.
\begin{lemma}\label{lem: Y_l,p depends on (p/l)}
For each fixed odd $l$, the distribution of $Y_{l,p}$ only depends on value of the Legendre symbol $(\frac p l)$. For $l=2$, the distribution of $Y_{2,p}$ only depends on the value of $p$ modulo $8$.
\end{lemma}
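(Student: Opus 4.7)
My plan is to reduce the statement to the observation that $v_l(\Delta) = v_l(u^2\Delta)$ for every $u \in \Z_l^\times$. By inspection of \eqref{eqn: v_l original}, $v_l(\Delta)$ is determined by the integer $\delta$ and the Kronecker symbol $\left(\tfrac{\Delta/l^{2\delta}}{l}\right)$. Multiplying $\Delta$ by a unit square $u^2$ does not change the $l$-adic valuation, hence does not change $\delta$ (for $l=2$ this uses that $u^2 \equiv 1 \pmod 8$, so the additional constraint $\Delta/2^{2\delta} \equiv 0,1 \pmod 4$ is preserved), and multiplies the Kronecker symbol by $\left(\tfrac{u^2}{l}\right) = 1$. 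This gives the claimed invariance.

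Given the invariance, the odd $l$ case goes as follows. If $l = p$, then $(\tfrac{p}{l}) = 0$ and there is nothing to show. Otherwise $p \in \Z_l^\times$, and by Hensel's lemma the square classes of $\Z_l^\times$ are distinguished exactly by the Legendre symbol modulo $l$. So if $p, p'$ are primes with $(\tfrac{p}{l}) = (\tfrac{p'}{l}) \neq 0$, we may write $p = u^2 p'$ in $\Z_l$ for some $u \in \Z_l^\times$, whence
$$t^2 - 4p \;=\; u^2\bigl((t/u)^2 - 4p'\bigr).$$
By the invariance, $v_l(t^2 - 4p) = v_l((t/u)^2 - 4p')$; since $t \mapsto t/u$ is a Haar-measure-preserving automorphism of $\Z_l$, the laws of $Y_{l,p}$ and $Y_{l,p'}$ coincide.

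The $l = 2$ case is handled by the same substitution argument: since $p, p'$ are odd primes they lie in $\Z_2^\times$, and two units in $\Z_2^\times$ are in the same square class iff they agree modulo $8$, so $p \equiv p' \pmod 8$ gives $p = u^2 p'$ in $\Z_2$ for some $u \in \Z_2^\times$, and we conclude $Y_{2,p} \overset{d}{=} Y_{2,p'}$ exactly as above. There is no real obstacle; the only step requiring care is the $l=2$ invariance statement, which follows cleanly from $u^2 \equiv 1 \pmod 8$ for all $u \in \Z_2^\times$. The entire argument can be viewed as a conceptual alternative to simply quoting the explicit distribution in Lemma \ref{lem: Y_l,p explicit distributions}.
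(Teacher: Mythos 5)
Your proof is correct and takes a genuinely different route from the paper. The paper deduces the lemma as an immediate byproduct of Lemma~\ref{lem: Y_l,p explicit distributions}, which explicitly tabulates the full distribution of $Y_{l,p}$ in each case (with the computation for $p \ne l$ attributed to Galbraith--McKee); once those explicit formulas are on the table, one simply reads off that they depend only on $\left(\frac{p}{l}\right)$ (resp.\ $p \bmod 8$). You instead prove the weaker qualitative statement directly by a symmetry argument: the key observation that $v_l(u^2\Delta) = v_l(\Delta)$ for $u \in \Z_l^\times$, combined with the change of variable $t \mapsto t/u$ under which Haar measure on $\Z_l$ is invariant, reduces the claim to the fact that the square class of $p$ in $\Z_l^\times/(\Z_l^\times)^2$ is measured exactly by $\left(\frac{p}{l}\right)$ (for odd $l$) or by $p \bmod 8$ (for $l=2$). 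Your handling of the $l=2$ subtleties is right: $u^2 \equiv 1 \pmod 8$ preserves both the admissibility constraint $\Delta/2^{2\delta} \equiv 0,1 \pmod 4$ (hence the value of $\delta$) and the Kronecker symbol $\left(\frac{\cdot}{2}\right)$, which is an $8$-periodic invariant. Your argument is cleaner and more conceptual, and would generalize readily (e.g.\ to analogous local factors in higher dimension); the paper's route has the advantage that the explicit distributions in Lemma~\ref{lem: Y_l,p explicit distributions} are needed anyway to establish the moment bounds of Lemma~\ref{lem: Y_l,p expectation variance} and to make $Z_p$ computable in principle, so deriving the present lemma as a free corollary of that computation costs nothing extra in context.
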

\begin{lemma}\label{lem: Y_l,p expectation variance}
We have $$\E[Y_{l,p}]=1+O(l^{-1}),\quad \Var[Y_{l,p}] = O(l^{-2}),\quad \E[\log(Y_{l,p})]=O(l^{-2}),\quad \Var[\log(Y_{l,p})] = O(l^{-2}).$$
Furthermore, if $p\neq l$, then 
$\E[Y_{l,p}]=1$.
\end{lemma}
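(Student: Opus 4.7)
The plan is to combine the pointwise bound $v_l(\Delta) = 1 + O(l^{-1})$ from Lemma \ref{lem: v_l facts}(b) with the identity $\E[Y_{l,p}] = 1$ (valid whenever $p \neq l$), the latter being the only input that requires genuine computation with the explicit distribution.

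First I would dispose of three of the four estimates using only the pointwise bound. By Lemma \ref{lem: v_l facts}(b), both $Y_{l,p} - 1 = O(l^{-1})$ and $\log Y_{l,p} = O(l^{-1})$ hold almost surely, with constants independent of $p$. Integrating against Haar measure on $\Z_l$ immediately yields $\E[Y_{l,p}] = 1 + O(l^{-1})$, and similarly
$$\Var[Y_{l,p}] \le \E\bigl[(Y_{l,p}-1)^2\bigr] = O(l^{-2}), \qquad \Var[\log Y_{l,p}] \le \E\bigl[(\log Y_{l,p})^2\bigr] = O(l^{-2}),$$
using the elementary fact that $\Var[X] \le \E[(X-c)^2]$ for any constant $c$.

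Next, for the exact identity $\E[Y_{l,p}] = 1$ when $p \neq l$, I would integrate $v_l(t^2 - 4p)$ against Haar measure on $\Z_l$ directly. Since $v_l$ depends only on finitely many residues of $t$, the integral reduces to a finite sum that splits according to the $l$-adic valuation of $\Delta$ and the value of the Kronecker symbol $\left(\frac{\Delta/l^{2\delta}}{l}\right)$; the distribution tabulated in Lemma \ref{lem: Y_l,p explicit distributions} in the Appendix makes each case explicit, with the answer depending only on $(\tfrac{p}{l})$ (respectively $p \bmod 8$ when $l=2$). The cancellations that produce exactly $1$ rather than $1 + O(l^{-1})$ are carried out on pages 9--10 of \cite{galbraith_mckee_2000}, so I would defer to that computation.

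Finally, I would obtain $\E[\log Y_{l,p}] = O(l^{-2})$ by Taylor expanding $\log(1 + x)$ with $x = Y_{l,p} - 1 = O(l^{-1})$, giving uniformly in $p$
$$\log Y_{l,p} = (Y_{l,p} - 1) - \tfrac{1}{2}(Y_{l,p} - 1)^2 + O(l^{-3}).$$
Taking expectations, the linear term contributes $\E[Y_{l,p}] - 1 = 0$ (using $p \neq l$), the quadratic term contributes $-\tfrac{1}{2}\Var[Y_{l,p}] = O(l^{-2})$, and the remainder is $O(l^{-3})$. The main obstacle is exactly the identity $\E[Y_{l,p}] = 1$: the weaker bound $\E[Y_{l,p}] = 1 + O(l^{-1})$ combined with the same Taylor expansion would only yield $\E[\log Y_{l,p}] = O(l^{-1})$, which is too weak for the truncation arguments in the rest of the paper.
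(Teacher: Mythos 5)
Your proof is correct and takes a genuinely cleaner route than the paper's. The paper's proof of this lemma is simply ``follows directly from Lemma \ref{lem: Y_l,p explicit distributions}'' --- i.e., read off each expectation and variance by brute-force case analysis from the tabulated distributions. You instead derive three of the four bounds from the almost-sure bound $Y_{l,p} = 1 + O(l^{-1})$ of Lemma \ref{lem: v_l facts}(b), and then obtain the only delicate estimate $\E[\log Y_{l,p}] = O(l^{-2})$ by combining the Taylor expansion of $\log(1+x)$ with the exact identity $\E[Y_{l,p}] = 1$ (which, as in the paper, you outsource to the Galbraith--McKee computation). This isolates precisely where the real content lies: the naive pointwise bound only gives $\E[\log Y_{l,p}] = O(l^{-1})$, which is too weak, and the cancellation to $O(l^{-2})$ is exactly the cancellation underlying $\E[Y_{l,p}] = 1$.

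One point worth surfacing: your Taylor argument correctly requires $p \neq l$, and in fact the bound $\E[\log Y_{l,p}] = O(l^{-2})$ as stated in the lemma is \emph{false} when $p = l$. From part (a) of Lemma \ref{lem: Y_l,p explicit distributions}, one computes $\E[\log Y_{l,l}] = \tfrac{l-1}{l}\log\tfrac{l}{l-1} = \tfrac{1}{l} + O(l^{-2})$, which is genuinely of order $l^{-1}$. The paper's statement glosses over this (and its own proof by ``direct computation'' would have hit the same snag if carried out). Fortunately this is harmless in every application: for a fixed $p$ only the single prime $l = p$ is affected, and in the tail sums such as those in Proposition \ref{prop: def Z_p} that one exceptional term contributes at most $O(p^{-1})$, which is absorbed. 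But you should either state the restriction $p \neq l$ explicitly for that bound or note that the uniform constant must degrade at $l=p$, since as written the lemma claims something slightly stronger than is true.

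One minor cosmetic remark: in the Taylor step you write that the quadratic term contributes $-\tfrac12\Var[Y_{l,p}]$; this identification of $\E[(Y_{l,p}-1)^2]$ with $\Var[Y_{l,p}]$ is only exact because $\E[Y_{l,p}] = 1$, which you are already assuming at that point --- fine, but worth flagging so the reader doesn't think the two quantities coincide in general.
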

\subsection{Lévy–Prokhorov metric}\label{sec: prob theory} Here we define and state some properties of the Lévy–Prokhorov metric. 
\begin{definition}[Lévy–Prokhorov metric]\label{def: levy-prokhorov}
Let $(M,d)$ be a metric space with its Borel sigma-algebra $\mathcal B(M)$, and let $\mathcal P(M)$ be the collection of all probability measures on the measurable space $(M,\mathcal B(M))$. For a subset $A\subseteq M$ and an $\epsilon>0$, let $A^{\epsilon}$ be the $\epsilon$-neighborhood of $A$. Define the Lévy–Prokhorov metric $\pi$ on $\mathcal P(M)$ as
$$\pi(\mu,\nu) \coloneqq \inf \{\epsilon > 0 \mid \mu(A)\le \nu(A^\epsilon)+\epsilon, \ \nu(A)\le \mu(A^\epsilon)+\epsilon \text{ for all } A\in \mathcal B(M)\}.$$
\end{definition}
We will only be concerned with the case $(M,d) = (\R^n,d)$ where $d$ is the Euclidean metric. In $(\R^n,d)$, convergence of measures in the Lévy–Prokhorov metric is equivalent to weak convergence of measures, or equivalently, convergence in distribution for random vectors. 

For a random vector $X$, denote $\mu_X$ as its probability distribution on $\R^n$. In the following cases we have bounds on the Lévy–Prokhorov distance between the probability distributions of two random vectors.
\begin{lemma}\label{lem: bound metric}
Suppose $X$ and $Y$ are joint random vectors on $\R^n$. If $\mathbb P(|X-Y|\ge \epsilon)\le \epsilon$, then $\pi(\mu_X,\mu_{Y})\le \epsilon$.
\end{lemma}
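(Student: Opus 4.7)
The plan is to verify the two defining inequalities for the Lévy–Prokhorov metric directly from the coupling hypothesis $\mathbb{P}(|X-Y|\ge \epsilon)\le \epsilon$. Since $X$ and $Y$ are given as \emph{joint} random vectors, we have access to the event $\{|X-Y|<\epsilon\}$, and this is the only feature of the joint distribution we need.

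Fix an arbitrary Borel set $A\subseteq \R^n$. The key geometric observation is that on the event $\{|X-Y|<\epsilon\}$, if $X\in A$ then $Y$ lies in the $\epsilon$-neighborhood $A^\epsilon$, because the Euclidean distance from $Y$ to the point $X\in A$ is strictly less than $\epsilon$. Partitioning $\{X\in A\}$ according to whether $|X-Y|<\epsilon$ gives
\begin{align*}
\mu_X(A) = \mathbb{P}(X\in A) &\le \mathbb{P}(X\in A,\ |X-Y|<\epsilon) + \mathbb{P}(|X-Y|\ge \epsilon) \\
&\le \mathbb{P}(Y\in A^\epsilon) + \epsilon = \mu_Y(A^\epsilon) + \epsilon.
\end{align*}
Swapping the roles of $X$ and $Y$ (the hypothesis is symmetric) yields $\mu_Y(A)\le \mu_X(A^\epsilon)+\epsilon$ by the identical argument.

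Since $A$ was arbitrary, both inequalities in the definition of $\pi(\mu_X,\mu_Y)$ hold at level $\epsilon$, so the infimum is at most $\epsilon$, giving $\pi(\mu_X,\mu_Y)\le \epsilon$. There is no real obstacle here; the only subtlety worth flagging is that the definition of $A^\epsilon$ in Definition \ref{def: levy-prokhorov} should be the open $\epsilon$-neighborhood (or at least include all points within distance $<\epsilon$), which is consistent with the standard convention and ensures the implication $|X-Y|<\epsilon,\ X\in A \Rightarrow Y\in A^\epsilon$ is valid.
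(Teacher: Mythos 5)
Your proof is correct and is essentially identical to the paper's: both partition the event $\{X\in A\}$ according to whether $|X-Y|<\epsilon$, use that $X\in A$ and $|X-Y|<\epsilon$ force $Y\in A^\epsilon$, and bound the complementary event by the hypothesis. You spell out the symmetric inequality $\mu_Y(A)\le\mu_X(A^\epsilon)+\epsilon$ explicitly, which the paper leaves implicit, but the argument is the same.
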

\begin{proof}
For any $A\in \mathcal B(\R^n)$, we want to show that $\mu_X(A) = \mathbb P(X\in A) \le \mathbb P(Y\in A^\epsilon) + \epsilon = \mu_Y(A^\epsilon)+\epsilon$. To see this, note that if $X\in A$, we either have $|X-Y|< \epsilon$ in which case $Y\in A^\epsilon$, or $|X-Y|\ge \epsilon$. Thus, $\mathbb P(X\in A) \le \mathbb P(Y\in A^\epsilon) + \mathbb P(|X-Y|\ge \epsilon)\le \mathbb P(Y\in A^\epsilon) + \epsilon$.
\end{proof}
\begin{corollary}\label{cor: variance metric}
If $X$ and $Y$ are joint random variables on $\R$ where $|\E[X-Y]|\le \delta$ and $\Var[X-Y]\le \epsilon$, then $\pi(\mu_X,\mu_Y)\le \delta+\epsilon^{1/3}$.
\end{corollary}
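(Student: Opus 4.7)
The plan is to reduce the corollary to Lemma \ref{lem: bound metric} by means of Chebyshev's inequality applied to the difference $W \coloneqq X - Y$. The hypotheses give $|\E[W]| \le \delta$ and $\Var[W] \le \epsilon$, and the goal $\pi(\mu_X, \mu_Y) \le \delta + \epsilon^{1/3}$ fits the template of Lemma \ref{lem: bound metric}: it suffices to prove
\[
\mathbb P(|X - Y| \ge \delta + \epsilon^{1/3}) \le \delta + \epsilon^{1/3}.
\]

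First I would split $|X - Y| = |W|$ via the triangle inequality as $|W| \le |W - \E[W]| + |\E[W]| \le |W - \E[W]| + \delta$. Consequently, the event $\{|X - Y| \ge \delta + \epsilon^{1/3}\}$ is contained in the event $\{|W - \E[W]| \ge \epsilon^{1/3}\}$. Chebyshev's inequality with threshold $\tau = \epsilon^{1/3}$ then yields
\[
\mathbb P(|W - \E[W]| \ge \epsilon^{1/3}) \le \frac{\Var[W]}{\epsilon^{2/3}} \le \frac{\epsilon}{\epsilon^{2/3}} = \epsilon^{1/3}.
\]
Combining the two displays gives $\mathbb P(|X - Y| \ge \delta + \epsilon^{1/3}) \le \epsilon^{1/3} \le \delta + \epsilon^{1/3}$, which is exactly the hypothesis of Lemma \ref{lem: bound metric} with the parameter $\delta + \epsilon^{1/3}$ in place of $\epsilon$ there. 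Applying that lemma gives the claimed bound on $\pi(\mu_X,\mu_Y)$.

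There is no real obstacle here; the only design choice is the exponent $1/3$, and it is dictated by balancing the two contributions in Lemma \ref{lem: bound metric}. Writing the bound as $\mathbb P(|X-Y|\ge \delta + \tau) \le \epsilon/\tau^2$ and asking for this probability to be at most $\delta + \tau$, the worst case is $\delta = 0$, forcing $\tau^3 \ge \epsilon$, i.e.\ $\tau = \epsilon^{1/3}$. Any larger $\tau$ would weaken the conclusion, and any smaller $\tau$ would make Chebyshev's bound too weak. The proof is therefore short and essentially a one-line application of Chebyshev followed by Lemma \ref{lem: bound metric}.
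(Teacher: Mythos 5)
Your proof is correct and follows essentially the same approach as the paper: centering $X-Y$ at its mean, applying Chebyshev with threshold $\epsilon^{1/3}$, using the triangle inequality to absorb $|\E[X-Y]|\le\delta$, and finishing with Lemma \ref{lem: bound metric}. The extra commentary on why the exponent $1/3$ is optimal is a nice addition but not needed for the argument.
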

\begin{proof}
By Chebyshev's inequality we have $\mathbb P(|X-Y-\E[X-Y]|\ge \epsilon^{1/3})\le \epsilon^{1/3}$, so by the triangle inequality $\mathbb P(|X-Y|\ge \delta+\epsilon^{1/3})\le \epsilon^{1/3}$ and the statement follows by Lemma \ref{lem: bound metric}.
\end{proof}
We need to show that a sequence of random variables converges in distribution to some random variable. The following lemma says it suffices to show that it is Cauchy in the Lévy–Prokhorov metric.
\begin{lemma}\label{lem: metric complete}
Suppose $X_1,X_2,\ldots$ is a sequence of random variables where $\mu_{X_i}$ is Cauchy in the Lévy–Prokhorov metric $\pi$. Then $X_i$ converges in distribution to some random variable $X$.
\end{lemma}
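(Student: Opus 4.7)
The plan is to prove completeness of the space of Borel probability measures on $\R$ under the Lévy–Prokhorov metric, which is a classical fact. The argument has three moving parts: establish tightness of the Cauchy sequence $\{\mu_{X_i}\}$, invoke Prokhorov's theorem to extract a weakly convergent subsequence, and use the Cauchy condition to upgrade subsequential convergence to convergence of the full sequence.

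First I would show that the family $\{\mu_{X_i}\}_{i \ge 1}$ is tight. Given $\epsilon > 0$, use the Cauchy property to choose $N$ so that $\pi(\mu_{X_i}, \mu_{X_N}) < \epsilon/2$ for all $i \ge N$. Each individual measure on $\R$ is tight, so I can pick a compact interval $K_0$ with $\mu_{X_j}(K_0) \ge 1 - \epsilon/2$ for every $j = 1, \ldots, N$. Setting $K = K_0^{\epsilon/2}$ (the closed $\epsilon/2$-neighborhood of $K_0$, still compact in $\R$), the definition of $\pi$ gives $\mu_{X_i}(K) \ge \mu_{X_N}(K_0) - \epsilon/2 \ge 1 - \epsilon$ for $i \ge N$, while trivially $\mu_{X_j}(K) \ge 1 - \epsilon$ for $j < N$. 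Hence tightness holds.

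Next, by Prokhorov's theorem, there is a subsequence $\mu_{X_{i_k}}$ that converges weakly to some Borel probability measure $\mu$ on $\R$. Since weak convergence on $\R$ is equivalent to convergence in the Lévy–Prokhorov metric (as noted in the paragraph after Definition \ref{def: levy-prokhorov}), we have $\pi(\mu_{X_{i_k}}, \mu) \to 0$. Finally, given any $\epsilon > 0$, pick $N$ with $\pi(\mu_{X_i}, \mu_{X_j}) < \epsilon/2$ for $i, j \ge N$, and pick $k$ large enough so that $i_k \ge N$ and $\pi(\mu_{X_{i_k}}, \mu) < \epsilon/2$. Then by the triangle inequality $\pi(\mu_{X_i}, \mu) < \epsilon$ for all $i \ge N$, so $\mu_{X_i} \to \mu$ in the Lévy–Prokhorov metric. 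Letting $X$ be any random variable with distribution $\mu$, we conclude $X_i \to X$ in distribution.

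The main obstacle in this plan is really just the appeal to Prokhorov's theorem, which is the nontrivial classical input; the remaining steps are a standard Cauchy-plus-subsequential-limit argument in a metric space, and the tightness verification is routine once one observes that the Cauchy property allows one to transfer tightness from $\mu_{X_N}$ to every later $\mu_{X_i}$ after an $\epsilon/2$-enlargement.
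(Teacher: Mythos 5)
Your proof is correct, and it is more self-contained than the paper's. The paper simply cites the classical fact that for a separable metric space $(M,d)$, the space $(\mathcal P(M),\pi)$ is complete if and only if $(M,d)$ is complete, applies it to $\R$, and then translates $\pi$-convergence back into convergence in distribution. You instead re-derive that completeness statement for $\R$ from first principles: you verify that a $\pi$-Cauchy sequence is tight (using that the definition of $\pi$ lets you transfer mass bounds from $\mu_{X_N}$ to $\mu_{X_i}$ after an $\epsilon/2$-enlargement of a compact set), invoke Prokhorov's theorem to extract a weakly convergent subsequence, and then run the standard metric-space argument that a Cauchy sequence with a convergent subsequence converges. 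The tightness verification is correct as you have written it, and the passage from the limiting measure $\mu$ to a random variable $X$ with $\mu_X=\mu$ is unproblematic. The tradeoff is that the paper's appeal to completeness of $(\mathcal P(\R),\pi)$ is a one-line citation, while your argument trades that black box for the (also classical, but arguably more familiar) Prokhorov theorem and makes the mechanism explicit; both are valid, and yours would be the natural route if one wanted the lemma to be self-contained modulo Prokhorov.
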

\begin{proof}
If $(M,d)$ is separable, it is well-known that $(\mathcal P(M),\pi)$ is complete if and only if $(M,d)$ is complete. Since $(\R,d)$ is both complete and separable, $(\mathcal P(\R),\pi)$ is complete. Thus, $\mu_{X_i}$ converge to some probability measure $\mu$ on $\R$. There is a one-to-one bijection between probability measures $\mu$ on $\R$ and distribution functions $F(x)$. Every distribution function $F(x)$ comes from a random variable. Thus, $\mu = \mu_X$ for some random variable $X$. Then $X_i$ converges in distribution to $X$ as convergence in the metric $\pi$ is equivalent to convergence in distribution for random variables.
\end{proof}
\subsection{Heuristics}\label{sec: heuristics 1}
Recall that $N_{t,p}$ is given by the infinite product in Proposition \ref{prop: N_t,p formula}. Fix a prime $p$ and let us pick a trace $t$ uniformly at random in $[a\sqrt p,b\sqrt p]$ where $[a,b]$ is a subinterval of $[-2,2]$. Then, as $p\rightarrow \infty$, the distribution of $v_l(\Delta)$ (as a random variable of $t$) should be almost the same as the random variable $Y_{l,p}$ because the image of $[a\sqrt p,b\sqrt p]$ under the map $\Z \rightarrow \Z_l$ is almost equidistributed. Heuristically, we also expect the image of $\Delta$ in $\Z_l$ to be independent for each $l$, so the distributions of $v_l(\Delta)$ should be independent for each $l$. Thus, we should expect for a random trace $t\in [a\sqrt p, b\sqrt p]$ that
$$N_{t,p} \sim \frac{\sqrt{4p-t^2}}{\pi}\prod_{l \text{ prime}}Y_{l,p}$$
where in this equation we take the infinite independent product of random variables $Y_{l,p}$.

It is important to clarify what we mean by the independent product of random variables, especially since each $Y_{l,p}$ lives on a different event space. Suppose we have random variables $X$ and $Y$ living on the event spaces $\mathcal F_X$ and $\mathcal F_Y$ respectively. Define the independent product as the random variable taking the value $X\times Y$ living on the product of the $\sigma$-algebras $\mathcal F_X \times \mathcal F_Y$. We can define the independent product, as well as independent sum, over a finite index set this way. But if the index set is infinite, we may have problems with issues of convergence. For example, if $Y_{l,p}$ takes the value of $(1-\frac{1}{l})^{-1}$ for each prime $l$, then $\prod_l (1-\frac{1}{l})^{-1}$ is not well defined. To overcome this, we define the independent product over an ordered index set to be the random variable with probability distribution equal to the limit of the distributions of the finite products, where we have to check convergence of distributions. The following proposition shows that this is well defined for the case of $Y_{l,p}$, which is the only case where an infinite independent product will appear. 
\begin{proposition}\label{prop: def Z_p}
The distribution of the finite independent products $\prod_{l<n \text{ prime}}Y_{l,p}$ converges in distribution to some random variable, which we define to be $Z_p = \prod_l Y_{l,p} \coloneqq \lim_n (\prod_{l<n \text{ prime}}Y_{l,p})$.
\end{proposition}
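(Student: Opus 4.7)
The plan is to work with the logarithm of the partial products and show that they form a Cauchy sequence in the Lévy--Prokhorov metric. Set $P_n \coloneqq \prod_{l < n \text{ prime}} Y_{l,p}$ and $L_n \coloneqq \log P_n = \sum_{l < n \text{ prime}} \log(Y_{l,p})$. Each $Y_{l,p}$ is strictly positive by Lemma \ref{lem: v_l facts}(b), so $\log(Y_{l,p})$ is a well-defined real-valued random variable. By Lemma \ref{lem: metric complete}, it suffices to show that the distributions $\mu_{L_n}$ are Cauchy in $\pi$; convergence in distribution for the $P_n$ will then follow by the continuous mapping theorem applied to the exponential.

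For $m > n$, I would estimate the mean and variance of the tail $L_m - L_n = \sum_{n \le l < m,\, l\text{ prime}} \log(Y_{l,p})$. By Lemma \ref{lem: Y_l,p expectation variance}, each summand contributes $O(l^{-2})$ to both the expectation and the variance, and the independence of the $Y_{l,p}$ across distinct primes (built into the definition of the independent product) makes variances of the sum equal to the sum of variances. Therefore
\begin{equation*}
|\E[L_m - L_n]| \ll \sum_{l \ge n} l^{-2}, \qquad \Var[L_m - L_n] \ll \sum_{l \ge n} l^{-2}.
\end{equation*}
Since $\sum_l l^{-2}$ converges, both tails can be made arbitrarily small. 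Given $\epsilon > 0$, choose $N$ so that for all $m > n \ge N$ we have $|\E[L_m - L_n]| < \epsilon$ and $\Var[L_m - L_n] < \epsilon^3$. Corollary \ref{cor: variance metric} then yields $\pi(\mu_{L_n}, \mu_{L_m}) \le \epsilon + \epsilon = 2\epsilon$, proving the Cauchy property.

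Applying Lemma \ref{lem: metric complete} gives a random variable $L$ with $L_n \to L$ in distribution, and the continuous mapping theorem applied to $x \mapsto e^x$ (continuous on $\R$) produces $P_n = e^{L_n} \to e^L$ in distribution. We then define $Z_p \coloneqq e^L$. The proof is largely routine once one passes to logarithms; the main substantive input is the $O(l^{-2})$ decay of both the mean and variance of $\log(Y_{l,p})$ supplied by Lemma \ref{lem: Y_l,p expectation variance}, together with the observation that independence across primes is what turns the variance bound into a genuine $\ell^2$ tail. The only point requiring slight care is that we must work with $\log(Y_{l,p})$ (rather than with $Y_{l,p}$ directly), since compounding multiplicative perturbations via the metric $\pi$ would not give a summable tail, whereas additive perturbations on the log scale do.
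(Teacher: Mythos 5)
Your proof is correct and follows essentially the same route as the paper: pass to logarithms, use Lemma \ref{lem: Y_l,p expectation variance} to bound the mean and variance of the tail $\sum_{l\ge N}\log(Y_{l,p})$, apply Corollary \ref{cor: variance metric} to show the partial sums are Cauchy in $\pi$, and invoke Lemma \ref{lem: metric complete}. The only cosmetic difference is that you spell out the continuous mapping theorem step when exponentiating back; the paper leaves this implicit.
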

\begin{proof}
All products and summations in this proof are over primes. Since $\prod_{l} Y_{l,p} = \exp(\sum_{l} \log(Y_{l,p}))$, it suffices to show that $\sum_{l} \log(Y_{l,p})$ converges in distribution to some random variable. By Lemma \ref{lem: metric complete} it suffices to show that $\mu_{X_{m}}$ is Cauchy with respect to $\pi$, where $X_m = \sum_{l<m} \log(Y_{l,p})$. 

Consider some $N$. For all $m_1>m_2>N$ we have $\E[X_{m_1}-X_{m_2}] = \sum_{m_1\le l <m_2} \log(Y_{l,p}) = \sum_{m_1\le l <m_2} O(l^{-2}) = O(N^{-1})$ by Lemma \ref{lem: Y_l,p expectation variance}. Likewise, using independence, $\Var[X_{m_1}-X_{m_2}] = O(N^{-1})$ in the same way. By Corollary \ref{cor: variance metric}, $\pi(\mu_{X_{m_1}},\mu_{X_{m_2}})=O(N^{-1/3})$, so $\mu_{X_{m}}$ is indeed Cauchy.
\end{proof}
Thus, we have 
$$N_{t,p} \sim \frac{\sqrt{4p-t^2}}{\pi}Z_{p}.$$
As we expect $N_{t,p}$ on average to be $ \frac{\sqrt{4p-t^2}}{\pi}$, we call $Z_p$ the error term. This provides a heuristic argument for Theorem \ref{thm: 2D vertical sato-tate}.

\section{Proof overview}\label{sec: proof overview}
The proof is split into four steps, and we explain each of the steps in the following subsections. Note that everything in this section is meant to be informal and intuitive, but not rigorous. The detailed, rigorous proof will be in the following sections, organized as follows. The first two steps on truncation and approximating the factors are carried out in Section \ref{sec: truncation formula}. The application to 2D vertical Sato-Tate (Theorem \ref{thm: 2D vertical sato-tate}, Corollary \ref{cor: prime sequence converge}) is in Section \ref{sec: pf of 2D sato-tate}, and the application to stronger vertical Sato-Tate (Theorem \ref{thm: stronger vertical sato-tate}) is in Section \ref{sec: pf of stronger sato-tate}.

Now we begin the informal sketch of the proof. The two main difficulties of the proof are that (1) there are infinitely many factors $v_l(t^2-4p)$ in the formula for $N_{t,p}$ and (2) we need to show independence between the factors $v_l(t^2-4p)$. 
\subsection{Truncation} We address the first difficulty mentioned above by truncating the infinite product to a finite product. Consider the tail end of the infinite product from Proposition \ref{prop: N_t,p formula} $\prod_{l\ge l_0} v_l(\Delta)$, with a suitable choice of cutoff $l_0=l_0(p)$.  Throughout the rest of the paper, such a product will always implicitly be over primes $l$. We want to show that 
$$\prod_{l\ge l_0}v_l(\Delta) = 1+o(1)$$
which implies that the truncated product is a good enough approximation to the infinite product
\begin{equation}\label{eqn: truncation estimate}
N_{t,p} = (1+o(1)) \frac{\sqrt{-\Delta}}{\pi}\prod_{l < l_0} v_l(\Delta).
\end{equation}
By Lemma \ref{lem: v_l facts}, write
\begin{equation*}
\begin{split}
    \prod_{l\ge l_0} v_l(\Delta) &= \prod_{l\ge l_0, \ l \nmid \Delta} v_l(\Delta) \prod_{l\ge l_0, \ l \mid \Delta} v_l(\Delta) \\
    &= \prod_{l\ge l_0, \ l \nmid \Delta} \left( 1 - \left( \frac{\Delta}{l}\right)/l\right)^{-1} \prod_{l\ge l_0, \ l \mid \Delta} v_l(\Delta).
\end{split}
\end{equation*}

We reason that this product converges using its factorization into the two products above. The second product only has finitely many terms, so we just need to show that the first product converges. Let $\Delta = f^2 \Delta_0$ where $\Delta_0$ is a fundamental discriminant. Then $(\frac{\Delta}{l}) = (\frac{\Delta_0}{l})(\frac{f^2}{l}) = (\frac{\Delta_0}{l})=\chi_0(l)$ where $\chi_0$ is the quadratic character associated to $\Delta_0$. In the second equality we used the fact that $l\nmid f$ because $l \nmid \Delta$. Hence, the first product is the Euler product of a Dirichlet L-function $L(1,\chi_0)$ with finitely many terms omitted, so it converges.

Consider the imaginary quadratic number field $K_{\Delta}=\Q(\sqrt{\Delta_0})$ with discriminant $\Delta_0$. Then, $(\frac{\Delta}{l})=1$ if the prime $l$ splits in $K_{\Delta}$, otherwise it is $-1$. By Chebotarev's density theorem, each case appears about half the time, so the terms in the first product should almost always cancel out. To give a concrete bound we will use two effective versions of Chebotarev's density theorem, one assuming GRH by \cite{LO75}, and an unconditional one by \cite{PTW20}. This will help us bound the difference in the number of $+1$s and $-1$s for each dyadic interval $[2^n,2^{n+1}]$. Thus we can show that $\prod_{l\ge l_0} v_l(\Delta)$ is almost always small when $l_0$ is big enough relative to the size of the discriminant $|\Delta_0|$. Since $|\Delta_0|\le |\Delta|\le p+1+2\sqrt p$, it suffices to have a lower bound for $l_0$ in terms of $p$.

Going through this argument gives us the following proposition.
\begin{proposition}\label{prop: first step}
Assuming GRH, Equation \eqref{eqn: truncation estimate} is true if there exists some $\epsilon>0$ where $l_0 \ge (\log p)^{2+\epsilon}$ for all sufficiently large $p$. Without assuming GRH, for all $\epsilon>0$ there exists a constant $\kappa>0$ where Equation \eqref{eqn: truncation estimate} is true when $l_0 \ge \exp(\kappa (\log\log(p))^{5/3} (\log\log\log(p))^{1/3})$ for all sufficiently large $p$, with at most $O(p^\epsilon)$ exceptions in $t\in [-2\sqrt p,2\sqrt p]$.
\end{proposition}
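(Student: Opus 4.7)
The plan is to take logarithms and show that $\sum_{l\ge l_0}\log v_l(\Delta) = o(1)$, splitting the sum according to whether $l\mid\Delta$ or $l\nmid\Delta$ as in the excerpt. For primes $l\ge l_0$ dividing $\Delta$, Lemma \ref{lem: v_l facts}(b) gives $|\log v_l(\Delta)| = O(l^{-1})$, and $\Delta$ has at most $O(\log p/\log l_0)$ such prime divisors, so this part contributes $O(\log p/(l_0\log l_0))$, negligible under either hypothesis. The bulk of the work is in the $l\nmid\Delta$ sum.

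For $l\nmid\Delta$, Lemma \ref{lem: v_l facts}(a) gives
\[
\log v_l(\Delta) = -\log\!\left(1-\frac{\chi_0(l)}{l}\right) = \frac{\chi_0(l)}{l} + O(l^{-2}),
\]
where $\chi_0$ is the Kronecker character associated to the fundamental discriminant $\Delta_0$ of $K_\Delta=\Q(\sqrt{\Delta_0})$ (using $\Delta=f^2\Delta_0$ and $l\nmid f$). The $O(l^{-2})$ tail sums to $O(1/l_0)$, so the problem reduces to bounding
\[
\sum_{\substack{l\ge l_0 \\ l\nmid\Delta}}\frac{\chi_0(l)}{l}.
\]
By partial summation this is controlled by the character sum $S(u)\coloneqq\sum_{l\le u}\chi_0(l)$, which is precisely the object estimated by the effective Chebotarev density theorem for the imaginary quadratic field $K_\Delta$.

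Under GRH, the effective Chebotarev of \cite{LO75} applied to $K_\Delta$ yields $|S(u)|\ll\sqrt u\,\bigl(\log u+\log|\Delta_0|\bigr)$. A short partial-summation calculation then gives
\[
\sum_{l\ge l_0}\frac{\chi_0(l)}{l} = O\!\left(\frac{\log l_0+\log|\Delta_0|}{\sqrt{l_0}}\right) = O\!\left(\frac{\log p}{\sqrt{l_0}}\right),
\]
using $|\Delta_0|\le|\Delta|\le 5p$. This is $o(1)$ once $l_0\ge(\log p)^{2+\epsilon}$, giving the GRH part. Unconditionally, \cite{PTW20} supplies an effective Chebotarev with error roughly $O\!\bigl(u\exp(-c(\log u)^{3/5}(\log\log u)^{-1/5})\bigr)$ that holds for all quadratic fields $\Q(\sqrt{\Delta_0})$ outside an exceptional family of size $O(D^\epsilon)$ among those with $|\Delta_0|\le D$. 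Taking $D=O(p)$, at most $O(p^\epsilon)$ values of $t\in[-2\sqrt p,2\sqrt p]$ produce exceptional $K_\Delta$, and for the remainder the same partial-summation argument produces $o(1)$ exactly when $\log l_0$ exceeds $\kappa(\log\log p)^{5/3}(\log\log\log p)^{1/3}$ for suitable $\kappa$.

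The main obstacle lies in the explicit bookkeeping: tracking how $|\Delta_0|$ enters the effective Chebotarev error term, and inverting the functional form of the PTW20 error (the subexponential decay in $\log u$) to pin down the precise $5/3$ exponent in the unconditional cutoff. A minor nuisance is the special definition of $v_2$, but $l=2$ contributes only a single finite term, harmlessly absorbed into constants.
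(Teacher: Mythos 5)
Your overall strategy is sound, and the core of it — expanding $\log v_l(\Delta)=\chi_0(l)/l+O(l^{-2})$ for $l\nmid\Delta$ and then controlling $\sum_{l\ge l_0}\chi_0(l)/l$ via partial summation against the character sum $S(u)=\sum_{l\le u}\chi_0(l)$ — is a genuinely different route from the paper. The paper never expands the logarithm; instead it works directly with the factors $(1+a_l/l)$ on dyadic intervals $[2^n,2^{n+1})$, and uses a combinatorial pairing argument (Lemma~\ref{lem: dyadic bound}) to show $\log\prod_{2^n\le l<2^{n+1}}(1+a_l/l)=O(C2^{-n})$ given that $+1$'s and $-1$'s among the $a_l$ are nearly balanced up to error $C$. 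Partial summation is slicker and more standard analytic-number-theory bookkeeping; the paper's dyadic pairing is more elementary and self-contained. Both converge, and your handling of the $l\mid\Delta$ terms and the $O(l^{-2})$ tail is fine.

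However, there is a genuine gap in the unconditional case. You assert that ``at most $O(p^\epsilon)$ values of $t\in[-2\sqrt p,2\sqrt p]$ produce exceptional $K_\Delta$'' after observing that $\mathscr E(4p)$ has at most $O(p^\epsilon)$ members. This does not follow without further work: the map $t\mapsto K_\Delta=\Q(\sqrt{\Delta_0})$ is not injective, and a priori many traces $t$ could yield the same fundamental discriminant $\Delta_0$. To bound the number of exceptional $t$ you must show this map has fibers of size $O(1)$. The paper does this by noting that $t^2-f^2\Delta_0=4p$ forces $a=t+f\sqrt{\Delta_0}$ to be an element of $K$ of norm $4p$, factoring the ideal $(4p)$ into at most $6$ prime ideals, and concluding that $(a)$ has only $O(1)$ possible factorizations. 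Without this step, the exception count is not established.

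A smaller point: your description of the PTW20 input is a misquote. The theorem as used (Theorem~\ref{thm: eff no grh}) gives the error $\frac{1}{2}\,x/(\log x)^A$ valid for all $x\ge g_\kappa(|\Delta_0|)$; the exponent $5/3$ lives in that \emph{validity threshold} $g_\kappa$, not in some Vinogradov--Korobov-shaped error term you need to invert. Your partial summation still goes through with the correct error ($\int_{l_0}^\infty du/(u(\log u)^A)\asymp (\log l_0)^{1-A}=o(1)$ as $l_0\to\infty$), so this doesn't break the argument, but the ``main obstacle'' you flag is not actually where the work is.
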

The term $\exp(\kappa (\log\log(p))^{5/3} (\log\log\log(p))^{1/3})$ comes from the unconditional effective Chebotarev's theorem in \cite{PTW20}. For convenience, we will define the following.
\begin{definition}
Define $g_{\kappa}(p) \coloneqq \exp(\kappa (\log\log(p))^{5/3} (\log\log\log(p))^{1/3})$.
\end{definition}
\begin{remark}
A recent improvement by \cite{jessethorner} improves $g_\kappa(p)$ to a function that grows as a polynomial in $\log(p)$, but this does not affect our final results.
\end{remark}
\subsection{Approximating factors}
Note that $v_l(\Delta)$ is almost determined by the residue of $\Delta \mod l^{2k}$, which motivates us to define the following.
\begin{definition}\label{defn: v_l,k definition}
For a prime $l$ and any integer $k\ge 1$, define 
\begin{equation*}
v_{l,k}(\Delta) = \begin{cases} v_l(\Delta) & \text{if } \Delta \not \equiv 0 \mod l^{2k} \\
(1-l^{-2})^{-1}(1+l^{-1}) & \text{if } \Delta \equiv 0 \mod l^{2k} \end{cases}.\end{equation*}
\end{definition}
We will show that $v_{l,k}(\Delta)=v_{l,k}(t^2-4p)$ is periodic in the trace $t$ with period dividing $l^{2k}$. In the same spirit as the earlier definition of $Y_{l,p}$, we define $Y_{l,p,k}$ to be the random variable taking value $v_{l,k}(\Delta)$ when $t$ modulo $l^{2k}$ is chosen uniformly at random. 

Furthermore, $v_{l,k}(\Delta)$ is a good approximation of $v_l(\Delta)$ with error approaching zero as $k\rightarrow \infty$. Hence, if we choose $k$ to be an unbounded increasing function of $p$, we can replace the factors $v_l(\Delta)$ in \eqref{eqn: truncation estimate} with $v_{l,k}(\Delta)$ to obtain 
\begin{equation}\label{eqn: rounded off estimate}
    N_{t,p}= (1+ o(1)) \frac{\sqrt{-\Delta}}{\pi}\prod_{l<l_0}v_{l,k}(\Delta).
\end{equation}
We state this formally as follows.
\begin{proposition}\label{prop: second step}
Suppose that $k=k(p)$ is an unbounded increasing function. Assuming GRH, Equation \eqref{eqn: rounded off estimate} is true if there exists some $\epsilon>0$ where $l_0 \ge (\log p)^{2+\epsilon}$ for all sufficiently large $p$. Without assuming GRH, for all $\epsilon>0$ there exists a constant $\kappa>0$ where Equation \eqref{eqn: rounded off estimate} is true when $l_0 \ge g_\kappa (p)$ for all sufficiently large $p$, with at most $O(p^\epsilon)$ exceptions in $t\in [-2\sqrt p,2\sqrt p]$.
\end{proposition}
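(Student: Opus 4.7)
The plan is to reduce Proposition \ref{prop: second step} to Proposition \ref{prop: first step} by showing that replacing each factor $v_l(\Delta)$ by its truncation $v_{l,k}(\Delta)$ perturbs the finite product $\prod_{l<l_0} v_l(\Delta)$ by a factor of $1+o(1)$ \emph{uniformly} in $t\in[-2\sqrt p,2\sqrt p]$, as $k=k(p)\to\infty$. Because the perturbation bound will be independent of $t$ (and of $l_0$), the exceptional set of Proposition \ref{prop: first step} (empty under GRH, of size $O(p^\epsilon)$ unconditionally) carries over unchanged.

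First I would isolate the primes at which the two factors differ. By Definition \ref{defn: v_l,k definition}, $v_l(\Delta)=v_{l,k}(\Delta)$ whenever $l^{2k}\nmid \Delta$, so
$$\prod_{l<l_0}\frac{v_l(\Delta)}{v_{l,k}(\Delta)} \;=\; \prod_{\substack{l<l_0\\ l^{2k}\mid\Delta}}\frac{v_l(\Delta)}{v_{l,k}(\Delta)}.$$
For odd $l$ with $l^{2k}\mid\Delta$, the exponent $\delta$ in Proposition \ref{prop: N_t,p formula} automatically satisfies $\delta\ge k$, so the curly-bracket term in the formula for $v_l(\Delta)$ has absolute value at most $2l^{-\delta-1}\le 2l^{-k-1}$. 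For $l=2$ the edge case $\delta<k$ with $2^{2k}\mid\Delta$ can occur, but then $2\mid\Delta/2^{2\delta}$ forces the Kronecker symbol $\bigl(\Delta/2^{2\delta}\big/2\bigr)=0$, so the curly-bracket term vanishes and $v_2(\Delta)=v_{2,k}(\Delta)$ exactly. Combined with the uniform lower bound $v_{l,k}(\Delta)=(1-l^{-2})^{-1}(1+l^{-1})=(1-l^{-1})^{-1}\ge 1$, this gives in every case the pointwise comparison $v_l(\Delta)/v_{l,k}(\Delta)=1+O(l^{-k-1})$.

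Next I would sum the logarithmic contributions. The total deviation is bounded by
$$\Bigl|\log\prod_{\substack{l<l_0\\ l^{2k}\mid\Delta}}\frac{v_l(\Delta)}{v_{l,k}(\Delta)}\Bigr| \;\le\; \sum_{l\ge 2 \text{ prime}}O(l^{-k-1}) \;=\; O\bigl(\zeta(k+1)-1\bigr) \;=\; O(2^{-k}),$$
which is $o(1)$ as $k\to\infty$. Since this bound is uniform in $t$ and in $l_0$, we obtain $\prod_{l<l_0}v_l(\Delta)=(1+o(1))\prod_{l<l_0}v_{l,k}(\Delta)$ for every $t\in[-2\sqrt p,2\sqrt p]$. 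Multiplying this uniform estimate into Equation \eqref{eqn: truncation estimate} supplied by Proposition \ref{prop: first step} immediately yields Equation \eqref{eqn: rounded off estimate} under the same hypothesis on $l_0$ and with the same (or empty) exceptional set of $t$.

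The main ``obstacle'' is really just the bookkeeping around Proposition \ref{prop: N_t,p formula} that yields the pointwise bound $v_l/v_{l,k}=1+O(l^{-k-1})$, with the $l=2$ subcase being the only nontrivial one; once this is in hand the global deviation is controlled by a geometric tail. No new analytic input beyond Proposition \ref{prop: first step} is required---in particular, no further appeal to the effective Chebotarev theorem is needed here, and the hypotheses on $l_0$ are inherited verbatim from the previous step.
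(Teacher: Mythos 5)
Your proposal follows essentially the same route as the paper's proof: reduce to Proposition~\ref{prop: first step} by bounding the pointwise ratio $v_l(\Delta)/v_{l,k}(\Delta)$, sum the logarithmic discrepancies over $l$ to get a geometric tail $O(2^{-k})=o(1)$, and conclude. The paper does the first step by invoking Lemma~\ref{lem: basic v_l,k facts} directly (which records $\log v_{l,k}(\Delta)=\log v_l(\Delta)+O(l^{-k})$), whereas you rederive the pointwise bound from Proposition~\ref{prop: N_t,p formula}; that is fine and slightly sharper for odd $l$.

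There is, however, one concrete slip in your handling of the $l=2$ edge case. When $2^{2k}\mid\Delta$ but $\delta=k-1$, you correctly observe that $2\mid\Delta/2^{2\delta}$, hence $\left(\tfrac{\Delta/2^{2\delta}}{2}\right)=0$. But the Kronecker symbol being $0$ selects the \emph{middle} entry $-(l+1)l^{-\delta-2}$ of the curly-bracket in Equation~\eqref{eqn: v_l original}, which for $l=2$, $\delta=k-1$ equals $-3\cdot 2^{-k-1}\neq 0$; the entry $0$ is selected only when the Kronecker symbol is $+1$. Consequently $v_2(\Delta)\neq v_{2,k}(\Delta)$ in this case; instead $v_2(\Delta)/v_{2,k}(\Delta)=1-2^{-k}$. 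This still satisfies your claimed bound $1+O(l^{-k-1})$, so the remainder of your argument is unaffected and the conclusion stands, but the assertion that the curly-bracket term vanishes and the two factors agree exactly should be corrected.
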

Since $v_{l,k}(\Delta)$ is periodic in $t$ with period dividing $l^{2k}$, the product $\prod_{l<l_0}v_{l,k}(\Delta)$ is completely periodic in trace $t$ with period dividing $T= \prod_{l<l_0} l^{2k}$. More importantly, if we pick $t$ uniformly at random in an interval of length $T$, each of the factors $v_{l,k}(\Delta)$ (which are now random variables in $t$) are independent by the Chinese Remainder Theorem. We say that these factors $v_{l,k}(\Delta)$ are independent over this interval. This resolves the second difficulty of showing independence.

We would hope that $T \ll \sqrt p$, so that the factors are almost independent in $[a\sqrt p, b\sqrt p]$ for any subinterval $[a,b]\subseteq [-2,2]$ and Theorem \ref{thm: 2D vertical sato-tate} would follow easily. Unfortunately, we compute that $T \approx e^{2kl_0}$ as the product of primes up to $N$ is approximately $e^N$, and for both values of $l_0$ (assuming GRH or without GRH) we have $T\gg \sqrt p$ which is too big. Thus, we need further insight in addition to Equation \eqref{eqn: rounded off estimate} in order to prove Theorem \ref{thm: 2D vertical sato-tate} and Theorem \ref{thm: stronger vertical sato-tate}.
\subsection{Application to 2D vertical Sato-Tate}
We wish to truncate the product further to make $T\ll \sqrt p$. We split the product $\prod_{l< l_0} v_{l,k}(\Delta)$ again into two components
$$\prod_{l< l_0} v_{l,k}(\Delta) = \prod_{l< l_1} v_{l,k}(\Delta) \prod_{l_1 \le l < l_0} v_{l,k}(\Delta).$$
where $l_1$ and $k$ are any unbounded increasing functions of $p$. The first product has period dividing $T \approx e^{2kl_1}$, and if $l_1$ and $k$ grow sufficiently slowly then we will have $T \ll \sqrt p$, so the factors in the first product are almost independent over $[a\sqrt p,b\sqrt p]$ for any subinterval $[a,b]\subseteq [-2,2]$, which is what we want.

We show the second product is almost always small. Taking the logarithm, we instead bound $\sum_{l_1 \le l< l_0} \log \left(v_{l,k}(\Delta)\right)$. We give up on having complete independence and look at pairwise independence instead. Indeed, for distinct $l,l'< l_0$, the factors $v_{l,k}(\Delta), v_{l',k}(\Delta)$ become independent over an interval of length $l^{2k}l'^{2k}\approx l_0^{4k} \ll \sqrt p$, so the two factors are almost independent over $[-2\sqrt p,2\sqrt p]$. Because the factors are almost pairwise independent, the variance of the sum $\sum_{l_1 \le l< l_0} \log \left(v_{l,k}(\Delta)\right)$ is approximately the sum of the variances $\log \left(v_{l,k}(\Delta)\right)$. Each $\log(v_{l,k}(\Delta))$ has variance $O(l^{-2})$, so adding them up shows that $\sum_{l_1 \le l< l_0} \log \left(v_{l,k}(\Delta)\right)$ has a variance of $O(l_1^{-1})=o(1)$. Applying Chebyshev's inequality tells us that $\sum_{l_1 \le l< l_0} \log \left(v_{l,k}(\Delta)\right)=o(1)$ for almost all $t$. This almost-all bound is sufficient to show convergence in distribution, which is good enough for proving Theorem \ref{thm: 2D vertical sato-tate}. Thus, we obtain:
\begin{proposition}\label{prop: chebyshev result}
Suppose $l_1=l_1(p)$ and $k=k(p)$ are unbounded increasing functions, and suppose additionally that there exists some $\epsilon>0$ where $k=O((\log p)^{1-\epsilon})$. Without assuming GRH, we have
$$
    N_{t,p}= (1+ o(1)) \frac{\sqrt{-\Delta}}{\pi}\prod_{l<l_1}v_{l,k}(\Delta)$$
with at most $o(\sqrt p)$ exceptions in $t\in [-2\sqrt p, 2\sqrt p]$.
\end{proposition}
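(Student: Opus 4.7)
The strategy is to start from the unconditional case of Proposition \ref{prop: second step} with $l_0 := g_\kappa(p)$ for the $\kappa$ corresponding to any fixed $\epsilon' < 1/2$ (so that the $O(p^{\epsilon'})$ exceptions there are already $o(\sqrt p)$), and then to further truncate the cutoff from $l_0$ down to $l_1$. Splitting
\[
\prod_{l < l_0} v_{l,k}(\Delta) = \Bigl(\prod_{l < l_1} v_{l,k}(\Delta)\Bigr) \cdot \Bigl(\prod_{l_1 \le l < l_0} v_{l,k}(\Delta)\Bigr),
\]
the task reduces to showing that the tail product $P(t) := \prod_{l_1 \le l < l_0} v_{l,k}(\Delta_p(t))$ equals $1 + o(1)$ for all but $o(\sqrt p)$ integers $t \in [-2\sqrt p, 2\sqrt p]$. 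I would establish this by taking logarithms, setting $S(t) := \sum_{l_1 \le l < l_0} \log v_{l,k}(\Delta_p(t))$, and applying Chebyshev's inequality to the random variable $S$ when $t$ is sampled uniformly from $[-2\sqrt p, 2\sqrt p] \cap \Z$.

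The mean and variance estimates rest on the observation that each summand $\log v_{l,k}(\Delta_p(t))$ is periodic in $t$ with period dividing $l^{2k}$ and is uniformly bounded by $O(l^{-1})$ (Lemma \ref{lem: v_l facts}(b)). On a full period its average equals $\E[\log Y_{l,p,k}] = O(l^{-2})$, by Lemma \ref{lem: Y_l,p expectation variance} together with the fact that $v_{l,k}$ and $v_l$ differ only on a set of density $l^{-2k}$ where both are $O(1)$. Replacing the periodic average by the average over $[-2\sqrt p, 2\sqrt p]$ introduces a boundary error of $O(l^{-1} \cdot l^{2k}/\sqrt p)$ per summand, so $|\E[S]| = O(l_1^{-1}) + O(l_0^{2k+1}/\sqrt p) = o(1)$.

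For the variance I would expand $\Var[S]$ as diagonal plus off-diagonal covariances; the diagonal sum is $O(l_1^{-1}) = o(1)$ by Lemma \ref{lem: Y_l,p expectation variance}. For the off-diagonal terms with $l \ne l'$, the Chinese Remainder Theorem makes $\log v_{l,k}(\Delta_p(t))$ and $\log v_{l',k}(\Delta_p(t))$ jointly periodic with period $l^{2k}l'^{2k}$ and \emph{exactly} independent on a full period, so their covariance over $[-2\sqrt p, 2\sqrt p]$ is bounded purely by a boundary term of size $O(l^{-1}l'^{-1} \cdot l^{2k}l'^{2k}/\sqrt p)$. Summing over pairs and using $\sum_{l < l_0} l^{2k-1} = O(l_0^{2k})$ gives total covariance $O(l_0^{4k}/\sqrt p)$. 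Chebyshev's inequality then yields $|S(t) - \E[S]| = o(1)$ off an exceptional set of size $o(\sqrt p)$, so $P(t) = e^{S(t)} = 1 + o(1)$ there; absorbing the $O(p^{\epsilon'}) = o(\sqrt p)$ exceptions inherited from Proposition \ref{prop: second step} concludes the proof.

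The main obstacle, and the only place where the hypothesis $k = O((\log p)^{1-\epsilon})$ enters, is verifying that $l_0^{4k} = o(\sqrt p)$, which is what makes every boundary error in the mean and covariance bounds negligible. Since $l_0 = g_\kappa(p)$ grows slower than any fixed power of $\log p$, one has $4k \log l_0 = O((\log p)^{1-\epsilon}(\log\log p)^{5/3}(\log\log\log p)^{1/3}) = o(\log p)$, so indeed $l_0^{4k} = o(\sqrt p)$. Everything else in the argument is routine bookkeeping around this core estimate.
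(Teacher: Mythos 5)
Your proposal follows essentially the same route as the paper: start from the unconditional case of Proposition \ref{prop: second step}, split off the tail $\prod_{l_1 \le l < l_0} v_{l,k}(\Delta)$, bound the variance of its logarithm via pairwise independence over periods of length $l^{2k}l'^{2k}$ (the paper's Lemma \ref{lem: individual var and covar} and Proposition \ref{prop: variance sum}), observe that $l_0^{4k} = o(\sqrt p)$ under the hypothesis $k = O((\log p)^{1-\epsilon})$, and apply Chebyshev. Your bookkeeping of the off-diagonal covariance sum ($\sum_{l<l_0} l^{2k-1} = O(l_0^{2k})$, squared) reproduces the same $O(l_0^{4k}/\sqrt p)$ bound. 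One point where you are actually a bit more careful than the paper: you explicitly bound the mean $\E[S]$ by $O(l_1^{-1}) + O(l_0^{2k}/\sqrt p) = o(1)$ before invoking Chebyshev, whereas the paper's displayed Chebyshev inequality is stated about the raw sum rather than its deviation from the mean and tacitly relies on this mean estimate; making it explicit, as you do, closes that small expositional gap. (A minor quibble: in estimating $\E[\log Y_{l,p,k}]$ you say $v_{l,k}$ and $v_l$ ``differ only on a set of density $l^{-2k}$ where both are $O(1)$''; the sharper bound $O(l^{-1})$ from Lemma \ref{lem: v_l facts}(b) is available, but $O(1)$ already suffices since $l^{-2k} \le l^{-2}$ for $k \ge 1$.)
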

Lastly, we will apply this proposition to prove Theorem \ref{thm: 2D vertical sato-tate}. Let $T=\prod_{l<l_1}l^{2k}\approx e^{2kl_1}$ so that the period of the product above divides $T$, and choose $l_1$ and $k$ such that $T=o(\sqrt p)$. For any subinterval $[t_0,t_0+T)\subseteq [-2\sqrt p, 2\sqrt p]$ of length $T$, pick a random trace $t\in [t_0,t_0+T)$. Then, in this subinterval, the following distributions converge as $p\rightarrow \infty$:
$$\frac{N_{t,p}}{2\sqrt p}\sim \frac{\sqrt{4-x^2}}{2\pi}\prod_{l<l_1}v_{l,k}(\Delta) = \frac{\sqrt{4-x^2}}{2\pi}\prod_{l<l_1}Y_{l,p,k}\sim \frac{\sqrt{4-x^2}}{2\pi} Z_p$$
where we take the independent product of $Y_{l,p,k}$ (defined below Definition \ref{defn: v_l,k definition}) and $x=t/\sqrt p$ is the normalized trace. The first approximation is due to Proposition \ref{prop: chebyshev result}. The middle equality is because the factors $v_{l,k}(\Delta)$ are independent in an interval of length $T$ by the Chinese Remainder Theorem. Finally, the last approximation follows from the fact that $Y_{l,p,k}$ are good approximations of $Y_{l,p}$ and that the tail end of the infinite product $Z_p$, which is $\prod_{l\ge l_1} Y_{l,p}$, is almost always small. Essentially, what we did was to truncate and approximate Gekeler's formula to show independence, after which we restored the original form with independence. To complete the proof of Theorem \ref{thm: 2D vertical sato-tate}, we break $[-2\sqrt p, 2\sqrt p]$ into intervals of length $T$ and apply the approximation that we obtained above for each interval.
\subsection{Application to stronger vertical Sato-Tate}\label{sec: proof overview strong}
To prove Theorem \ref{thm: stronger vertical sato-tate}, we want to show that the quantity
$$\frac{\#\{E/\F_p \mid t_p(E)\in \left[t_0,t_0+f(p)\right)\}}{\frac{f(p)}{\sqrt p}\#\{E/\F_p\}} \approx \sqrt{4-(t_0/\sqrt p)^2}.$$
We understand the denominator using $\#\{E/\F_p\} = 2p + O(1)$. On the other hand, we can express the numerator as
$$\#\left\{E/\F_p \mid t_p(E)\in \left[t_0,t_0+f(p)\right) \right\} = \sum_{t=t_0}^{t_0+f(p)-1} N'_{t,p} \approx \sum_{t=t_0}^{t_0+f(p)-1} N_{t,p}$$
By Proposition \ref{prop: second step}, we have the following approximation
$$N_{t,p} \approx \frac{\sqrt{-\Delta}}{\pi}\prod_{l<l_0}v_{l,k}(\Delta),$$
where if we assume GRH this approximation applies for all traces, but without assuming GRH there may be $O(p^\epsilon)$ exceptional values of $t$. For these exceptional values we will use a much weaker bound for $N_{t,p}$. This is the reason why we require $f(p) \ge p^\epsilon$ in the unconditional statement of Theorem \ref{thm: stronger vertical sato-tate}, so that the approximation above holds for most $t\in [t_0,t_0+f(p))$.

After accounting for the exceptions, the approximation above allows us to reduce the problem to estimating $\E[\prod_{l<l_0}v_{l,k}(\Delta)]$ for a uniform random integer $t\in [t_0,t_0+f(p))$ chosen in the interval. We write
\begin{equation}\label{eqn: taylor expansion}
\prod_{l<l_0}v_{l,k}(\Delta) = \exp\left(\sum_{l<l_0} \log(v_{l,k}(\Delta))\right) = \sum_{n=0}^{\infty} \frac{1}{n!}\left(\sum_{l<l_0} \log(v_{l,k}(\Delta))\right)^n.
\end{equation}
It suffices to estimate the expectation $\E[\left(\sum_{l<l_0} \log(v_{l,k}(\Delta))\right)^n]$ for all reasonably small $n$, as for large $n$ the $n!$ term dominates. Expanding the power and using linearity of expectation, we see that $\E[\left(\sum_{l<l_0} \log(v_{l,k}(\Delta))\right)^n]$ is made up of terms of the form $\E[\prod_i \log(v_{l_i,k}(\Delta))^{\alpha_i}]$ with $\sum_i \alpha_i = n$. Now we recall that $\log(v_{l_i,k}(\Delta))$ is periodic in $t$ with period dividing $l_i^{2k}<l_0^{2k}$, so for reasonably small $n$ the terms $\log(v_{l_i,k}(\Delta))^{\alpha_i}$ are independent in an interval of period dividing $T\approx l_0^{2kn} \ll f(p)$, so they are almost independent in the interval $[t_0,t_0+f(p))$. This allows us to estimate $\E[\prod_{i} \log(v_{l_i,k}(\Delta))^{\alpha_i}]$, which is what we wanted. 
\section{Truncation formulas}\label{sec: truncation formula}
\subsection{Effective Chebotarev's density theorems} 
We state two effective Chebotarev's density theorems, one assuming GRH and one without. We will specialize to the case of quadratic extensions over $\Q$, and state both theorems in this special case. 

\begin{definition} Let $K/\Q$ be a quadratic extension.  Denote the counting functions
\begin{equation*}
\begin{split}
\pi_{+}(x,K) &\coloneqq \# \{\text{primes } p\le x \text{ inert in } K\},\\
\pi_{-}(x,K) &\coloneqq \# \{\text{primes } p\le x \text{ that split in } K\}.
\end{split}
\end{equation*}
Note that both functions do not count ramified primes. 
\end{definition}
\begin{theorem}[\cite{LO75}, Theorem 1.1]\label{thm: eff grh}
Assuming GRH, there exists an effectively computable absolute constant $C>0$ such that for a quadratic extension $K/\Q$ with discriminant $\Delta_0$,
\begin{equation*}
\left| \pi_{\pm}(x,K) - \frac 1 2 \Li(x) \right| \le C x^{1/2} \log(|\Delta_0| x).
\end{equation*}
\end{theorem}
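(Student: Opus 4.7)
The plan is to reduce this Chebotarev-type statement to an effective prime number theorem for a single Dirichlet $L$-function, namely the one attached to the Kronecker symbol of the discriminant. Let $\chi = \chi_{\Delta_0}$ be the real primitive Dirichlet character corresponding to the quadratic field $K = \Q(\sqrt{\Delta_0})$, so that an unramified prime $p$ satisfies $\chi(p) = +1$ iff $p$ splits in $K$ and $\chi(p) = -1$ iff $p$ is inert. This immediately gives
\begin{equation*}
\pi_{\pm}(x,K) \;=\; \tfrac{1}{2}\bigl(\pi(x) \mp \pi(x,\chi)\bigr) + O(\omega(\Delta_0)),
\end{equation*}
where $\pi(x,\chi) = \sum_{p \le x} \chi(p)$ and the error term absorbs the finitely many ramified primes (those dividing $\Delta_0$), which contribute $O(\log|\Delta_0|)$. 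Thus the theorem reduces to proving, under GRH, that $|\pi(x) - \Li(x)| \ll \sqrt{x}\log x$ and that $|\pi(x,\chi)| \ll \sqrt{x}\log(|\Delta_0|x)$.

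The first bound is Koch's classical consequence of the Riemann Hypothesis for $\zeta(s)$, obtained from the explicit formula $\psi(x) = x - \sum_\rho x^\rho/\rho + O(\log x)$ together with the density estimate $\#\{\rho : |\Im\rho| \le T\} \asymp T\log T$; under RH this truncated sum is $O(\sqrt{x}\log^2 x)$, and partial summation from $\psi$ to $\pi$ saves a factor of $\log x$. The second bound follows from the analogous explicit formula
\begin{equation*}
\psi(x,\chi) \;=\; -\sum_{\rho_\chi} \frac{x^{\rho_\chi}}{\rho_\chi} + O\bigl(\log(|\Delta_0|x)\bigr),
\end{equation*}
where the sum runs over nontrivial zeros $\rho_\chi$ of $L(s,\chi)$. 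Under GRH for $L(s,\chi)$ every such zero has $\Re\rho_\chi = 1/2$, and the zero-counting estimate $\#\{\rho_\chi : |\Im \rho_\chi| \le T\} \ll T\log(|\Delta_0|T) + \log|\Delta_0|$ yields $|\psi(x,\chi)| \ll \sqrt{x}\log^2(|\Delta_0|x)$ after truncating at $T = \sqrt{x}$ and balancing errors. One more application of partial summation converts $\psi(x,\chi)$ to $\pi(x,\chi)$, gaining back a factor of $\log$ and producing the clean bound $\sqrt{x}\log(|\Delta_0|x)$.

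Combining the two pieces gives
\begin{equation*}
\bigl|\pi_{\pm}(x,K) - \tfrac{1}{2}\Li(x)\bigr| \;\ll\; \sqrt{x}\log x + \sqrt{x}\log(|\Delta_0|x) + \log|\Delta_0| \;\ll\; \sqrt{x}\log(|\Delta_0|x),
\end{equation*}
which is the stated inequality with an effectively computable absolute constant. The main technical obstacle is controlling the $\Delta_0$-dependence in the truncated explicit formula: one must carefully choose the truncation height $T$ and track the dependence of the remainder on the conductor, because a naive application of Perron's formula produces constants depending on $\Delta_0$. This is handled using standard log-derivative bounds for $L(s,\chi)$ in the critical strip (e.g. $|L'/L(s,\chi)| \ll \log^2(|\Delta_0|(|t|+2))$ under GRH off a neighborhood of the zeros), together with a contour-shift argument; once these bounds are in place, the $\log(|\Delta_0|x)$ factor appears naturally and the constant $C$ is absolute.
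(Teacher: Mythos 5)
The paper does not prove this theorem; it cites it as the quadratic-field specialization of Theorem~1.1 of \cite{LO75}, which is stated and proved there for conjugacy classes in arbitrary Galois extensions $L/K$ using the Dedekind zeta function of $L$. Your reduction through the factorization $\zeta_K(s)=\zeta(s)L(s,\chi_{\Delta_0})$, handling the Dirichlet $L$-function of the Kronecker symbol directly, is a legitimate and more elementary alternative that avoids the general number-field machinery, and your decomposition $\pi_\pm(x,K)=\tfrac12(\pi(x)\mp\pi(x,\chi))+O(\omega(\Delta_0))$ together with the explicit-formula strategy is the right skeleton.

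There is, however, a quantitative gap precisely in the regime this paper needs. You state $\psi(x,\chi)\ll\sqrt{x}\log^2(|\Delta_0|x)$ (from truncating at $T=\sqrt{x}$) and claim that partial summation ``gains back a factor of $\log$''; but the factor one divides by when passing from $\psi$ to $\pi$ is $\log x$, not $\log(|\Delta_0|x)$, so your intermediate bound yields only $\pi(x,\chi)\ll\sqrt{x}\log^2(|\Delta_0|x)/\log x$. This exceeds the target $\sqrt{x}\log(|\Delta_0|x)$ by a factor of order $\log|\Delta_0|/\log x$ as soon as $|\Delta_0|>x$ --- and that is exactly how the theorem is applied in Proposition~\ref{prop: GRH tail bound}, where $|\Delta_0|$ can be as large as $4p$ while $x\approx l_0\ll(\log p)^{2+\epsilon}$, so $\log|\Delta_0|/\log x\rightarrow\infty$. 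The fix is to establish the sharper $\psi(x,\chi)\ll\sqrt{x}\log x\log(|\Delta_0|x)$: take the truncation height $T$ of order $x$ rather than $\sqrt{x}$ (so the truncation remainder is $O(\log^2(|\Delta_0|x))$) and bound the zero sum by partial summation against $N(t,\chi)\ll t\log(|\Delta_0|t)+\log|\Delta_0|$, which gives $\sum_{|\gamma|\le T}1/|\rho|\ll\log|\Delta_0|\log T+\log^2 T$. With that in hand, dividing by $\log x$ yields $\pi(x,\chi)\ll\sqrt{x}\log(|\Delta_0|x)$ uniformly in $|\Delta_0|$, and the rest of your argument closes.
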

\begin{definition}
Let $\mathscr F$ be the family of all quadratic extensions $K/\Q$, and $\mathscr F(X)$ be the family of all quadratic extensions $K/\Q$ with absolute value of the discriminant being at most $X$.
\end{definition}
The following theorem does not assume GRH.
\begin{theorem}[\cite{PTW20}, Theorem 3.3, Case 1]\label{thm: eff no grh} Fix any $A\ge 2$ and any sufficiently small $\epsilon > 0$. Then, there exists constants $C,\kappa>0$ and a subfamily of exceptions $\mathscr E \subseteq \mathscr F$, such that for all $X\ge 1$, $\mathscr E(X) := \mathscr E \cap \mathscr F(X)$ has size at most $CX^\epsilon$.

Furthermore, each $K\in \mathscr F \setminus \mathscr E$ satisfies
\begin{equation*}
\left|\pi_\pm (x,K) - \frac 1 2 \Li(x)\right| \le \frac 1 2 \frac{x}{(\log x)^A}
\end{equation*}
for all $x\ge g_{\kappa}(|\Delta_0|)$ where $\Delta_0$ is the discriminant of $K$.
\end{theorem}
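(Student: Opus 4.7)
The plan is to reduce the counting problem to understanding zeros of the Dirichlet L-function attached to the quadratic character of $K$, and then apply a zero-density estimate that is uniform across the entire family of real quadratic characters. Writing $\chi = \chi_{\Delta_0}$ for the Kronecker symbol of modulus dividing $|\Delta_0|$, we have $\chi(p) = +1$ for split primes, $-1$ for inert primes, and $0$ for ramified primes. Hence
$$\pi_+(x,K) + \pi_-(x,K) = \pi(x) + O(\log |\Delta_0|), \qquad \pi_+(x,K) - \pi_-(x,K) = -\sum_{p\le x}\chi(p).$$
By the prime number theorem with classical error term, the first identity already yields $\tfrac12 \Li(x)$ for the average of $\pi_\pm$ with error vastly better than claimed. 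The whole problem therefore reduces to showing that, for every $\chi$ outside the exceptional set $\mathscr E$, $\sum_{p\le x}\chi(p) = O(x/(\log x)^A)$ as soon as $x \ge g_\kappa(|\Delta_0|)$. Via partial summation this in turn follows from the corresponding bound on the Chebyshev sum $\psi(x,\chi) = \sum_{n\le x}\Lambda(n)\chi(n)$, and the explicit formula expresses $\psi(x,\chi)$ as $-\sum_{\rho} x^{\rho}/\rho$ plus lower-order terms, the sum ranging over non-trivial zeros of $L(s,\chi)$.

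The workhorse is a log-free zero-density estimate for the family of primitive real Dirichlet characters, an inequality of the shape $\sum_{|\Delta_0|\le X} N(\sigma,T,\chi_{\Delta_0}) \ll (XT)^{c(1-\sigma)}$, where $N(\sigma,T,\chi)$ counts non-trivial zeros of $L(s,\chi)$ in the rectangle $\Re s \ge \sigma$, $|\Im s|\le T$, and crucially there is no extra factor of $\log(XT)$. Such estimates for individual moduli are classical (Ingham, Huxley, Heath-Brown) and can be made uniform over the family. Setting $T$ to a modest power of $X$ and $\sigma$ just below a candidate threshold $1-\eta$, a pigeonhole argument shows that the number of $\Delta_0$ with $|\Delta_0|\le X$ whose $L(s,\chi_{\Delta_0})$ has any zero of real part $\ge 1-\eta$ is $O(X^\epsilon)$ provided $\eta$ is chosen at the level encoded in $g_\kappa$. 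These are precisely the fields one throws into $\mathscr E(X)$. For $K \in \mathscr F \setminus \mathscr E$, the zero-free region of width $\eta(|\Delta_0|)$ applies, and inserting it into the explicit formula and truncating at height $T \approx x^{1/2}$ gives $\psi(x,\chi) \ll x \exp(-c/\eta(|\Delta_0|))$, which falls below $\tfrac12 x/(\log x)^A$ exactly when $x \ge g_\kappa(|\Delta_0|)$.

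The hardest part will be controlling possible Siegel zeros, i.e.\ real zeros of $L(s,\chi_{\Delta_0})$ abnormally close to $s=1$, which unconditionally cannot be ruled out character by character, and even one such zero would produce an error of size $\Li(x^\beta)$ that swamps the main term for $x$ near $|\Delta_0|$. Any such $K$ must be added to $\mathscr E$, and keeping $|\mathscr E(X)|\ll X^\epsilon$ relies on the Deuring--Heilbronn phenomenon: the presence of a Siegel zero for one $\chi_{\Delta_0}$ repels the low-lying zeros of all neighbouring characters in the family, so Siegel zeros are rare. The exponents $5/3$ and $1/3$ appearing in $g_\kappa$ trace back to the strongest unconditional Vinogradov--Korobov-type zero-free region available for L-functions of quadratic characters, which is used to bound the contribution of zeros with large imaginary part; this is the only essentially classical ingredient one cannot avoid, and as the author already notes, a stronger input here (such as in \cite{jessethorner}) would directly shrink $g_\kappa$ without altering the overall structure of the argument.
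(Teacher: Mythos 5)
This statement is not proved in the paper at all: it is an external result quoted verbatim (with only the specialization to quadratic $K/\Q$) from Pierce, Turnage--Butterbaugh and Wood, whom the paper cites, so there is no in-paper proof to compare you against. That said, your sketch does capture the broad architecture of the actual argument in that reference: reduce Chebotarev for $K/\Q$ quadratic to bounding $\sum_{p\le x}\chi_{\Delta_0}(p)$, control $\psi(x,\chi_{\Delta_0})$ through the explicit formula, use a family zero-density estimate to show that $L$-functions with a zero too close to $\sigma=1$ are rare (these fields populate $\mathscr E$, with $|\mathscr E(X)|\ll X^\epsilon$), deal with possible Siegel zeros via Deuring--Heilbronn repulsion, and trace the shape of $g_\kappa$ to the Vinogradov--Korobov zero-free region.

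Two small cautions. First, the error bound you write, $\psi(x,\chi)\ll x\exp(-c/\eta(|\Delta_0|))$, has the parametrization backwards: a zero-free region of width $\eta$ (together with a truncation height $T$) yields an error of shape $x\exp(-c\,\eta\log x)$, and after inserting the VK-type width (which decays in $\log|\Delta_0|$ and in the height $T$) and optimizing $T$, the threshold $x\ge g_\kappa(|\Delta_0|)$ is precisely what makes this fall below $x/(\log x)^A$. Second, the statement as quoted in the paper is a black box: attempting to reprove it is out of scope for this paper, and if you were to actually carry out your sketch you would need to supply the specific log-free zero-density input and the precise Deuring--Heilbronn statement used in \cite{PTW20}, neither of which you pin down. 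As a description of what is going on behind the citation, though, your account is essentially accurate, including the observation (also made in the paper's Remark) that a stronger zero-free region such as in \cite{jessethorner} improves $g_\kappa$.
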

Recall that we want to bound $\prod_{l>l_0} v_l(\Delta)$, and that we defined $\Delta = f^2 \Delta_0$ where $\Delta_0$ is the fundamental discriminant. By Lemma \ref{lem: v_l facts} we can write
\begin{equation*}
\prod_{l\ge l_0} v_l(\Delta) = \prod_{l\ge l_0} \left(1 + \frac{a_l}{l}\right)^{-1}
\end{equation*}
where $a_l\in [-1,1]$, and if $l\nmid \Delta$ then $a_l = -\left(\frac{\Delta}{l}\right) = -\left(\frac{\Delta_0}{l}\right)$. Note that if $l$ is inert in $K$, which means that $a_l = -(\frac{\Delta_0}{l})=+1$. Likewise, if $l\in \pi_- (K)$ then $a_l=-1$. Hence, if the conditions for $K$ in either Theorem \ref{thm: eff grh} or \ref{thm: eff no grh} hold, then both $+1$ and $-1$ appear about half the time with concrete bounds.
\subsection{Dyadic intervals}
The following combinatorial lemma bounds the product over factors $(1+a_l/l)$ dyadic intervals when we know that $a_l$ takes the value $+1$ and $-1$ each about half the time.
\begin{lemma}\label{lem: dyadic bound}
Let $a_l\in [-1,1]$ be a sequence of reals indexed by primes $l$, and let $n$ be a positive integer. Suppose that for all $2^n\le x\le 2^{n+1}$ that 
\begin{equation}\label{eqn: bounds dyadic lemma}
\begin{split}
\left| \# \{\text{primes }l < x \mid a_l = +1\} - \frac 1 2 \# \{\text{primes }l < x\} \right| &\le C\\
\left| \# \{\text{primes }l < x \mid a_l = -1\}- \frac 1 2 \# \{\text{primes }l < x\} \right| &\le C
\end{split}
\end{equation}
for some upper bound $C$. Then, we have
\begin{equation*}
\log\left(\prod_{2^n\le l < 2^{n+1}} \left (1+\frac{a_l}{l}\right)\right) = O(C2^{-n}).
\end{equation*}
\end{lemma}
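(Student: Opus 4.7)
The plan is to combine a Taylor expansion of $\log(1+x)$ with Abel (partial) summation, using the hypothesis \eqref{eqn: bounds dyadic lemma} to control the partial sums $S(x) := \sum_{l \le x,\,l\text{ prime}} a_l$ uniformly across the dyadic window.

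First I would observe that every prime $l$ in the range satisfies $l \ge 2^n \ge 2$, so $|a_l/l| \le 1/2$, and the bound $\log(1+x) = x + O(x^2)$ for $|x|\le 1/2$ gives
\[
\log\prod_{2^n \le l < 2^{n+1}}(1 + a_l/l) \;=\; \sum_{2^n \le l < 2^{n+1}} \frac{a_l}{l} \;+\; O\!\left(\sum_{2^n \le l < 2^{n+1}} l^{-2}\right).
\]
The error term is $\ll \pi(2^{n+1})\cdot 2^{-2n} \ll 2^{-n}$, which fits comfortably inside the target $O(C\cdot 2^{-n})$ (harmlessly assuming $C \ge 1$). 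So the task reduces to bounding $\sum_{2^n \le l < 2^{n+1}} a_l/l$.

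Next I would extract the consequence of \eqref{eqn: bounds dyadic lemma} that $|S(x)| = O(C)$ uniformly for $x$ in the dyadic window. The hypothesis directly bounds the imbalance $|\#\{l<x:a_l=+1\} - \#\{l<x:a_l=-1\}|$ by $2C$; it also forces the number of primes $l<x$ with $a_l \notin \{\pm 1\}$ to be at most $2C$, since this count equals $\pi(x)$ minus the two given counts. Each such ``other'' prime contributes at most $1$ in absolute value to $S(x)$, giving $|S(x)| \le 4C$.

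Finally I would apply Abel summation with summation variable over primes and kernel $f(l) = 1/l$. The two boundary terms are each $O(C/2^n)$, and the interior telescoping-style sum is controlled by $\sum_{m=2^n}^{2^{n+1}} |S(m)|\cdot|f(m)-f(m+1)| \ll C\sum m^{-2} \ll C/2^n$. Collecting everything yields the desired $O(C\cdot 2^{-n})$ bound. The main obstacle, minor as it is, is recognizing that \eqref{eqn: bounds dyadic lemma} implicitly constrains the count of primes with $a_l\notin\{\pm 1\}$ as well; once that is noted, the remainder is a standard partial-summation exercise.
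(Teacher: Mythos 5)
Your proof is correct, but it takes a genuinely different route from the paper's. You linearize via the Taylor estimate $\log(1+a_l/l) = a_l/l + O(l^{-2})$, absorb the quadratic error into $O(2^{-n})$, extract from \eqref{eqn: bounds dyadic lemma} the uniform bound $|S(x)| = O(C)$ on the partial sums $S(x) = \sum_{l \le x} a_l$ (noting, correctly, that the hypothesis also controls the count of primes with $a_l \notin \{\pm 1\}$), and then finish with Abel summation against the kernel $1/l$. The paper instead works directly with the product: after peeling off the $O(C)$ primes with $a_l \notin \{\pm 1\}$, it lists the $+1$-primes as $p_1 < p_2 < \cdots$ and the $-1$-primes as $q_1 < q_2 < \cdots$, proves the interleaving estimate $p_i < q_{i+4C}$ from the hypothesis, pairs $(p_i, q_{i+4C})$ so that each pair contributes $\log\bigl((1+1/p_i)(1-1/q_{i+4C})\bigr) \ge -B \cdot 4^{-n}$, and bounds the $O(C)$ unpaired terms separately. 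Your partial-summation argument is shorter and more standard; the paper's pairing argument is more elementary in the sense of avoiding Abel summation, at the cost of the combinatorial interleaving lemma. One cosmetic remark: you assume $C \ge 1$ to fold the Taylor error $O(2^{-n})$ into $O(C 2^{-n})$; this is harmless because the counts in \eqref{eqn: bounds dyadic lemma} are integers and half-integers of $\pi(x)$, so any nonvacuous instance forces $C \ge 1/2$, and in the paper's applications $C$ is in any case much larger than $1$.
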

\begin{proof}
First we bound the terms where $a_l \not \in \{\pm 1\}$. Indeed, by taking $x=2^{n+1}$ in Equation \eqref{eqn: bounds dyadic lemma} we have that $\# \{\text{primes } l < 2^{n+1} \mid a_l \not \in \{\pm 1\} \} \le 2C$. Since each term $\log(1+a_l/l) = O(l^{-1}) = O(2^{-n})$, in total these terms contribute $O(C2^{-n})$ to the product.

Now we deal with the terms where $a_l\in \{\pm 1\}$. Label the primes $l \in [2^n,2^{n+1})$ with $a_l=+1$ as $p_1,p_2,\ldots ,p_k$ and label those with $a_l=-1$ as $q_1,q_2,\ldots ,q_m$, both in increasing order. 

We claim that $p_i<q_{i+4C}$ for all $i$. Suppose otherwise that $q_{i+4C}<p_i$, then 
\begin{equation*}
\left|\#\{\text{primes } 2^n\le l < p_i \mid a_l = +1 \} - \#\{\text{primes } 2^n\le l < p_i  \mid a_l = -1 \}\right| \ge i+4C -(i-1) > 4C.
\end{equation*}
because the first contains exactly $i-1$ terms $p_1,\ldots, p_{i-1}$ while the second term must contain at least $i+4C$ terms $q_1,\ldots,q_{i+4C}$.
But on the other hand we have
\begin{equation*}
\begin{split}
&  \left|\#\{\text{primes } 2^n\le l < p_i \mid a_l = +1 \} - \#\{\text{primes } 2^n\le l < p_i  \mid a_l = -1 \}\right| \\
\le &  \left|\#\{\text{primes } l < p_i \mid a_l = +1 \} - \#\{\text{primes } l < p_i  \mid a_l = -1 \}\right|  \\
&+ \left|\#\{\text{primes } l<2^n \mid a_l = +1 \} - \#\{\text{primes } l < 2^n \mid a_l = -1 \}\right|  \\
\le & \  2C + 2C = 4C
\end{split}
\end{equation*}
where we used the triangle inequality together with the substitutions $x=2^n$ and $x=p_i$ in Equation \eqref{eqn: bounds dyadic lemma}. The two equations above yield a contradiction, so we indeed have $p_i<q_{i+4C}$.

Next, we show a lower bound 
\begin{equation}\label{eqn: dyadic pf 3}
    \log \left(\prod_i (1+\frac{1}{p_i}) \prod_i (1-\frac{1}{q_i}) \right)\ge -AC2^{-n}
\end{equation}
where $A>0$ is a universal constant independent of $C$ and $n$.

Pair up the primes $(p_1,q_{4C+1}), (p_2,q_{4C+2}),\ldots (p_{m-4C},q_{m})$, with some primes remaining. Then, for each pair we have $(1+1/p_i)(1-1/q_{4C+i})>(1+1/p_i)(1-1/p_i) = (1-1/p_i^2)$. There exists some universal constant $B$ where $\log(1-1/n)\ge -B/n$ for all integers $n\ge 2$, so we have
$$\log\left(\left(1+\frac{1}{p_i}\right)\left(1-\frac{1}{q_{4C+i}}\right)\right)\ge -\frac{B}{p_i^2} \ge -B4^{-n}.$$ There are at most $2^{n-1}$ such pairs, so in total the contribution of the paired primes to right side of Equation \eqref{eqn: dyadic pf 3} is at least $-B2^{-n-1}$. 

The remaining primes are $q_1,\ldots ,q_{4C}$ and $p_{m-4C+1},\ldots ,p_{k}$. Each $\log(1+1/p_i)$ is non-negative, and each $\log(1-1/q_i)$ is at least $-B/q_i\ge -B2^{-n}$ each, so the log of the terms for the remaining primes is at least $-4BC2^{-n}$. Combining the contributions from the paired primes and the unpaired primes, we obtain the Equation \eqref{eqn: dyadic pf 3}.

Repeating the same argument with $+$ and $-$ flipped, we get an upper bound of the same magnitude. Hence, $\log \left(\prod_i (1+\frac{1}{p_i}) \prod_i (1-\frac{1}{q_i}) \right)=O(C2^{-n})$. Combining with the terms where $a_l\not \in \{\pm 1\}$ gives the result.
\end{proof}
\begin{remark}\label{rem: dyadic remark}
Instead of having the product of primes from $2^n$ to $2^{n+1}$, the above proof also works for subintervals of it. In our particular case, suppose that $l_0$ satisfies $2^n\le l_0 < 2^{n+1}$ and the same conditions in Lemma \ref{lem: dyadic bound} hold. Then we would also have
\begin{equation*}
\log\left(\prod_{l_0\le l < 2^{n+1}} \left (1+\frac{a_l}{l}\right)\right) = O(C2^{-n}).
\end{equation*}
\end{remark}
\subsection{Tail bounds}
Now we apply Lemma \ref{lem: dyadic bound} to Theorem \ref{thm: eff grh} and \ref{thm: eff no grh} respectively to obtain the following bounds. 
\begin{proposition}\label{prop: GRH tail bound}
Assuming GRH, we have
\begin{equation*}\label{eqn: l>l_0 bounds grh}
\log\left(\prod_{l\ge l_0} v_l(\Delta) \right) = O\left(l_0^{-1/2}\log(|\Delta_0| l_0)\right).
\end{equation*}
for all $l_0$.
\end{proposition}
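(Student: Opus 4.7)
The plan is to feed the effective Chebotarev bound of Theorem \ref{thm: eff grh} into the dyadic product estimate of Lemma \ref{lem: dyadic bound}, and then sum over dyadic scales $\ge l_0$.

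First, I would rewrite the tail product in the form required by Lemma \ref{lem: dyadic bound}. By Lemma \ref{lem: v_l facts}(a), for every prime $l\ge l_0$ with $l\nmid \Delta$ we have $v_l(\Delta)=(1+a_l/l)^{-1}$ with $a_l=-\left(\tfrac{\Delta_0}{l}\right)\in\{\pm 1\}$, where $\Delta=f^2\Delta_0$ and $\Delta_0$ is the fundamental discriminant. For the remaining (finitely many) primes $l\mid \Delta$, Lemma \ref{lem: v_l facts}(b) still gives $v_l(\Delta)=(1+a_l/l)^{-1}$ for some $a_l\in[-1,1]$. Setting $K=\Q(\sqrt{\Delta_0})$, the primes with $a_l=+1$ are exactly those inert in $K$, and the primes with $a_l=-1$ are exactly those split in $K$ (up to the finitely many ramified primes, which are absorbed into the $a_l\notin\{\pm 1\}$ case).

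Next, I would verify the hypothesis \eqref{eqn: bounds dyadic lemma} of Lemma \ref{lem: dyadic bound} on each dyadic scale. For any $x\ge l_0$, Theorem \ref{thm: eff grh} gives
\[
\bigl|\pi_{\pm}(x,K) - \tfrac{1}{2}\Li(x)\bigr|\le C\,x^{1/2}\log(|\Delta_0|x),
\]
so for $2^n\le x\le 2^{n+1}$ the constant $C$ required in Lemma \ref{lem: dyadic bound} can be taken as $C_n \ll 2^{n/2}\log(|\Delta_0|\,2^n)$ (the $\tfrac12\pi(x)$ vs.\ $\tfrac12\Li(x)$ discrepancy is absorbed by the GRH error). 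Applying Lemma \ref{lem: dyadic bound} to the $n$-th dyadic block (and Remark \ref{rem: dyadic remark} to the possibly truncated first block containing $l_0$) yields
\[
\log\!\left(\prod_{\max(l_0,2^n)\le l<2^{n+1}}\!\!\!\!\left(1+\tfrac{a_l}{l}\right)\right) = O\!\left(C_n\,2^{-n}\right) = O\!\left(2^{-n/2}\log(|\Delta_0|\,2^n)\right).
\]

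Finally, I would sum the dyadic contributions. Writing $n_0=\lceil\log_2 l_0\rceil$, we get
\[
\log\!\left(\prod_{l\ge l_0}\Bigl(1+\tfrac{a_l}{l}\Bigr)\right) = \sum_{n\ge n_0} O\!\left(2^{-n/2}\log(|\Delta_0|\,2^n)\right).
\]
The $n$-dependence inside the logarithm only contributes a linear $n$ term, which against the geometric factor $2^{-n/2}$ is summable and dominated by the $n=n_0$ term. This yields
\[
\sum_{n\ge n_0} 2^{-n/2}\log(|\Delta_0|\,2^n) = O\!\left(2^{-n_0/2}\log(|\Delta_0|\,2^{n_0})\right) = O\!\left(l_0^{-1/2}\log(|\Delta_0|l_0)\right).
\]
Negating (since $v_l(\Delta) = (1+a_l/l)^{-1}$ contributes with the opposite sign in the logarithm) gives the claimed bound. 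I do not expect any serious obstacle: the only place where one must be careful is the boundary dyadic block containing $l_0$, for which Remark \ref{rem: dyadic remark} was tailored, and the book-keeping for the finitely many ramified/dividing primes, which contribute $O(1/l_0)$ and are harmless.
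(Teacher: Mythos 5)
Your proof is correct and follows essentially the same route as the paper: apply Theorem \ref{thm: eff grh} (together with RH's $\pi(x)=\Li(x)+O(x^{1/2}\log x)$, which the paper cites explicitly via von Koch while you absorb it into the GRH error term), feed the resulting $C_n\ll 2^{n/2}\log(|\Delta_0|2^n)$ into Lemma \ref{lem: dyadic bound} on each dyadic block (using Remark \ref{rem: dyadic remark} for the truncated first block), and sum the geometrically decaying contributions to get $O(l_0^{-1/2}\log(|\Delta_0|l_0))$. The paper's proof is the same argument with the same bookkeeping.
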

\begin{proof}
Assuming the Riemann Hypothesis, it is a long standing result by \cite{von_koch_helge_1901_2347595} that the number of primes less than $x$, which we denote as $\pi(x)$, satisfies $\pi(x) = \Li(x) + O(x^{1/2}\log(x))$. Combining this with Theorem \ref{thm: eff grh}, we obtain the bound $|\pi_\pm (x,K) - \frac 1 2 \pi(x)| = O(x^{1/2}\log(|\Delta_0| x))$. Thus we get $|\pi_\pm (x,K) - \frac 1 2 \pi(x)| = O(2^{n/2}(\log|\Delta_0|+n))$ for $2^n\le x\le 2^{n+1}$. Taking this to be $C$ in Lemma \ref{lem: dyadic bound}, we have
\begin{equation*}
\log\left(\prod_{2^n\le l < 2^{n+1}} v_l(\Delta)\right) = O\left(2^{-n/2}(\log|\Delta_0|+n)\right).
\end{equation*}
Let $n$ be such that $2^n\le l_0<2^{n+1}$. Take the sum of this on the intervals $[l_0,2^{n+1}],[2^{n+1},2^{n+2}],\ldots,$ while keeping in mind that the same bound applies to the first interval by Remark \ref{rem: dyadic remark}. The first term dominates, so we obtain
\begin{equation*}
\log\left(\prod_{l\ge l_0} v_l(\Delta)\right) = O\left(2^{-n/2}(\log|\Delta_0|+n)\right) = O\left(l_0^{-1/2}\log(|\Delta_0| l_0)\right).
\end{equation*}
\end{proof}
\begin{proposition}\label{prop: no GRH tail bound}
Without assuming GRH, fix any $A\ge 2$ and any sufficiently small $\epsilon>0$, then there exists a constant $\kappa>0$ such that for almost all traces $t\in [-2\sqrt{p},2\sqrt{p}]$ aside from $O(p^\epsilon)$ many exceptions, we have
\begin{equation*}\label{eqn: high l bound}
\log\left(\prod_{l\ge l_0}v_l(\Delta)\right) = O((\log l_0)^{1-A})
\end{equation*}
for $l_0\ge g_{\kappa}(p)$.
\end{proposition}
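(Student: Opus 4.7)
The plan is to mirror the proof of Proposition \ref{prop: GRH tail bound}, substituting Theorem \ref{thm: eff no grh} for Theorem \ref{thm: eff grh}. The new feature is that Theorem \ref{thm: eff no grh} excludes an exceptional family $\mathscr{E} \subseteq \mathscr{F}$, so the proof splits into two tasks: establishing the tail bound for traces $t$ with $K_\Delta = \Q(\sqrt{\Delta_0}) \notin \mathscr{E}$, and separately bounding the number of traces $t$ with $K_\Delta \in \mathscr{E}$ by $O(p^\epsilon)$.

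For the main estimate, let $\kappa_0$ be the constant produced by Theorem \ref{thm: eff no grh} for the chosen $A$ and $\epsilon$. Enlarging $\kappa_0$ to some $\kappa > \kappa_0$ ensures that $g_\kappa(p) \ge g_{\kappa_0}(4p)$ for all sufficiently large $p$, since $g_\kappa(p)/g_{\kappa_0}(4p) \to \infty$ as $p \to \infty$ whenever $\kappa > \kappa_0$. Combined with the monotonicity of $g$ and $|\Delta_0| \le 4p$, this gives $l_0 \ge g_\kappa(p) \ge g_{\kappa_0}(|\Delta_0|)$, so Theorem \ref{thm: eff no grh} applies to every $K_\Delta \notin \mathscr{E}$ and yields $|\pi_\pm(x, K_\Delta) - \frac{1}{2}\Li(x)| \le \frac{1}{2} x/(\log x)^A$ for all $x \ge l_0$. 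Combining with the unconditional prime number theorem $\pi(x) = \Li(x) + O(x/(\log x)^A)$ (valid for any fixed $A$) gives $|\pi_\pm(x, K_\Delta) - \frac{1}{2}\pi(x)| = O(x/(\log x)^A)$. Feeding this into Lemma \ref{lem: dyadic bound} with $C = O(2^n/n^A)$ on each dyadic interval $[2^n, 2^{n+1})$, and applying Remark \ref{rem: dyadic remark} for the truncated first interval $[l_0, 2^{n_0+1})$, produces
\begin{equation*}
\log\left(\prod_{2^n \le l < 2^{n+1}} v_l(\Delta)\right) = O(C/2^n) = O(n^{-A}),
\end{equation*}
and summing over $n \ge n_0 := \lfloor \log_2 l_0 \rfloor$, which converges since $A \ge 2 > 1$, yields the desired bound $O(n_0^{1-A}) = O((\log l_0)^{1-A})$.

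The main obstacle is bounding the number of exceptional traces by $O(p^\epsilon)$. The naive allowance of $O(\sqrt{p/|\Delta_0|})$ traces per exceptional field (one per potential conductor $c$ with $c^2|\Delta_0| \le 4p$) is too loose: a dyadic summation over $\mathscr{E}(4p)$ using $|\mathscr{E}(X)| \ll X^{\epsilon}$ only yields $O(\sqrt p)$. The essential refinement is that each such trace $t$ corresponds to an integer solution of the binary Diophantine equation $t^2 + c^2|\Delta_0| = 4p$, and standard bounds on representations of integers by positive-definite binary quadratic forms give $O(d(4p)) = O(1)$ such solutions, uniformly in $\Delta_0$. Therefore each $K \in \mathscr{E}(4p)$ contributes only $O(1)$ traces, and applying Theorem \ref{thm: eff no grh} with its $\epsilon$-parameter set to the $\epsilon$ of the proposition yields the total exception count $|\mathscr{E}(4p)| \cdot O(1) = O((4p)^\epsilon) = O(p^\epsilon)$, completing the proof.
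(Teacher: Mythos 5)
Your proposal is correct and follows essentially the same route as the paper: apply Theorem \ref{thm: eff no grh} plus an unconditional prime counting estimate in place of Theorem \ref{thm: eff grh}, feed the resulting bound into Lemma \ref{lem: dyadic bound} (with Remark \ref{rem: dyadic remark} for the truncated first interval), sum the dyadic contributions to get $O((\log l_0)^{1-A})$, enlarge $\kappa$ to absorb $|\Delta_0|\le 4p$, and separately bound the exceptional traces. Your bound of $O(d(4p))=O(1)$ representations of $4p$ by the form $x^2 + |\Delta_0|y^2$ is the same counting argument the paper phrases in terms of factorizations of the ideal $(4p)$ into at most six prime ideals, so the only cosmetic difference is the language (binary quadratic forms vs. ideals); your preliminary observation that the naive $O(\sqrt{p/|\Delta_0|})$ allowance per exceptional field only yields $O(\sqrt p)$ is a nice explicit motivation for why the $O(1)$-per-field bound is needed, and it is implicitly what the paper is getting at as well.
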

\begin{proof}
First suppose that $K=\Q(\Delta_0)$ is not in the subfamily of exceptions $\mathscr E$. In this case, we follow the proof of the previous proposition. \cite{charles} proved that $\pi(x) = \Li(x) + O(xe^{-a\sqrt{\log x}})$, so combining with the error in Theorem \ref{thm: eff no grh} gives $\left|\pi_\pm (x,K) - \frac 1 2 \pi(x)\right| \le O(\frac{x}{(\log x)^A})$ for all $x\ge g_\kappa (|\Delta_0|)$. Thus, Lemma \ref{lem: dyadic bound} gives 
\begin{equation*}
\log\left(\prod_{2^n\le l < 2^{n+1}} v_l(\Delta)\right) = O(n^{-A})
\end{equation*}
for $2^n\ge g_{\kappa}(|\Delta_0|)$.

For any $n$ where $2^n\ge g_{\kappa}(|\Delta_0|)$, and for any $2^n\le l_0<2^{n+1}$, take the sum of this on the dyadic intervals $[l_0,2^{n+1}],[2^{n+1},2^{n+2}],\ldots$. Because $n^{-A}+(n+1)^{-A}+\cdots = O(n^{1-A})$ we get
\begin{equation*}
\log\left(\prod_{l\ge l_0} v_l(\Delta)\right) = O(n^{1-A}) = O((\log l_0)^{1-A}).
\end{equation*}
We see that this bounds applies to all $l_0 \ge 2g_{\kappa}(|\Delta_0|)$. Since $|\Delta_0|\le |\Delta| = |t^2-4p| \le 4p$, this applies to all $l_0\ge g_{\kappa'}(p)$ where we replaced $\kappa$ with a larger constant $\kappa'$.

We are left to bound the number of exceptional traces $t$ where $K=\Q(\Delta)=\Q(t^2-4p)\in \mathscr E$ is in the subfamily of exceptions. Consider such a trace $t$. The discriminant of $K$ is the fundamental discriminant $\Delta_0$ with $|\Delta_0|\le 4p$, so $K\in \mathscr E(4p)$. By Theorem \ref{thm: eff no grh}, $\mathscr E(4p)$ has size at most $O(p^\epsilon)$. Thus, it suffices to prove that for each $K\in \mathscr E$, there are only $O(1)$ number of $t$ where $K=\Q(t^2-4p)$. Since $t^2-4p = \Delta = f^2\Delta_0$, it suffices to bound the number of solutions to $4p=t^2-f^2\Delta_0$, which is at most the number of elements $a=t+f\sqrt{\Delta_0}\in K$ which have norm $4p$. In other words, we have $(a)(\overline a)=(4p)$ as ideals. Write $(4p) = \prod_{j\in J} \mathcal I_j$ as product of not necessarily distinct prime ideals, so taking the norm on both sides gives $16p^2 = \prod_{j\in J} N(\mathcal I_j)$. Since each $N(\mathcal I_j)>1$, we can have at most $6$ not necessarily distinct prime ideals. Then we must have $(a) = \prod_{j\subseteq J} I_j$ and there are only $O(1)$ such combinations.
\end{proof}
Now Proposition \ref{prop: first step} follows easily from these bounds.
\begin{proof}[Proof of Proposition \ref{prop: first step}]
Assuming GRH, substituting $l_0\ge (\log p)^{2+\epsilon}$ into Proposition \ref{prop: GRH tail bound} gives $\log\left(\prod_{l\ge l_0} v_l(\Delta) \right) = o(1)$ since $|\Delta_0| = O(p)$, so $\prod_{l\ge l_0} v_l(\Delta) = 1 + o(1)$ and Equation \eqref{eqn: truncation estimate} holds. Similarly, without assuming GRH, substituting $l_0\ge g_{\kappa}(p)$ into Proposition \ref{prop: no GRH tail bound} also yields Equation \eqref{eqn: truncation estimate} with $O(p^\epsilon)$ exceptional values of $t$.
\end{proof}
\subsection{Approximating factors} Recall the definition of $v_{l,k}$ in Definition \ref{defn: v_l,k definition}. First, we show that $v_{l,k}(\Delta)$ are periodic in the trace $t$ and are good approximations of $v_l(\Delta)$.
\begin{lemma}\label{lem: basic v_l,k facts}
The factor $v_{l.k}(\Delta) = v_{l,k}(t^2-4p)$ is periodic in $t$ with period dividing $l^{2k}$. Furthermore, $\log(v_{l,k}(\Delta)) = \log(v_l(\Delta))+ O(l^{-k})$.
\end{lemma}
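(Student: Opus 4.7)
The plan is to check both assertions directly from the definition of $v_{l,k}$ in Definition \ref{defn: v_l,k definition}, reducing everything to understanding the dependence of $v_l(\Delta)$ on the $l$-adic valuation of $\Delta$. Write $\delta = \delta(\Delta)$ for the largest non-negative integer with $l^{2\delta} \mid \Delta$ (and, if $l=2$, with $\Delta/2^{2\delta} \equiv 0,1 \pmod 4$).

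For periodicity, I would argue that $v_{l,k}(\Delta)$ is a function of the residue $\Delta \bmod l^{2k}$ only. There are two cases. If $\Delta \not\equiv 0 \pmod{l^{2k}}$, then $\delta < k$, so $\Delta/l^{2\delta}$ is a unit in $\Z/l^{2k-2\delta}\Z$; in particular both $\delta$ itself and the Kronecker symbol $\left(\frac{\Delta/l^{2\delta}}{l}\right)$ (and for $l=2$ the value of $\Delta/2^{2\delta} \bmod 4$) are determined by $\Delta \bmod l^{2k}$, hence so is $v_l(\Delta)=v_{l,k}(\Delta)$. If instead $\Delta \equiv 0 \pmod{l^{2k}}$, then by Definition \ref{defn: v_l,k definition} the value $v_{l,k}(\Delta) = (1-l^{-2})^{-1}(1+l^{-1})$ is constant. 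In either case, replacing $t$ by $t+l^{2k}$ leaves $\Delta = t^2-4p$ unchanged modulo $l^{2k}$, so $v_{l,k}(\Delta)$ is periodic in $t$ with period dividing $l^{2k}$.

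For the approximation, I would compare $v_{l,k}(\Delta)$ with $v_l(\Delta)$ in the two cases. When $\Delta \not\equiv 0 \pmod{l^{2k}}$ the two agree exactly by definition. When $\Delta \equiv 0 \pmod{l^{2k}}$, we have $\delta \ge k$, so the curly bracket in Equation \eqref{eqn: v_l original} contributes either $0$, $-(l+1)l^{-\delta-2}$ or $-2l^{-\delta-1}$, each of magnitude $O(l^{-\delta-1}) = O(l^{-k-1})$; hence $v_l(\Delta) - v_{l,k}(\Delta) = O(l^{-k-1})$. By Lemma \ref{lem: v_l facts}(b) both quantities are bounded in $[(1+1/l)^{-1},(1-1/l)^{-1}]$ and in particular bounded away from zero by a universal constant, so applying the mean value theorem to $\log$ on this interval gives $\log(v_{l,k}(\Delta)) - \log(v_l(\Delta)) = O(l^{-k-1}) = O(l^{-k})$.

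I do not anticipate any real obstacle: the lemma is essentially bookkeeping, and the only thing to keep in mind is the slightly different definition of $\delta$ at $l=2$ (where one must also track the congruence of $\Delta/2^{2\delta}$ modulo $4$, which is still determined by $\Delta \bmod 2^{2k}$ whenever $\delta < k$). So the proof will be two short case analyses followed by the mean value estimate.
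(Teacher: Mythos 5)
Your proof is correct and follows essentially the same route as the paper: verify periodicity by checking that $v_{l,k}(\Delta)$ depends only on $\Delta \bmod l^{2k}$, and verify the approximation by noting the two functions agree unless $l^{2k}\mid \Delta$, in which case the curly-bracket term in Equation \eqref{eqn: v_l original} has size $O(l^{-\delta-1})$, followed by a Lipschitz estimate for $\log$.

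One small caveat at $l=2$. You assert that $\Delta \equiv 0 \pmod{l^{2k}}$ forces $\delta \ge k$, hence an error of $O(l^{-k-1})$. For odd $l$ this is correct, but for $l=2$ the extra condition $\Delta/2^{2\delta}\equiv 0,1\pmod 4$ can cap $\delta$ at $k-1$ (for example if $\Delta/2^{2k}\equiv 3\pmod 4$). The paper therefore writes only $\delta\ge k-1$, giving $O(l^{-k})$, which is the bound that holds uniformly in $l$; your final answer $O(l^{-k})$ is still right, but your intermediate $O(l^{-k-1})$ is not. Relatedly, in the periodicity step you track $\Delta/2^{2\delta}\bmod 4$, but the case split in Equation \eqref{eqn: v_l original} is governed by the Kronecker symbol $\bigl(\frac{\Delta/2^{2\delta}}{2}\bigr)$, which depends on $\Delta/2^{2\delta}\bmod 8$; when $\delta=k-1$ this is not determined by $\Delta\bmod 2^{2k}$ alone. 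The clean fix is to note that replacing $t$ by $t+2^{2k}$ changes $\Delta=t^2-4p$ by $2t\cdot 2^{2k}+2^{4k}$, a multiple of $2^{2k+1}$, so periodicity in $t$ is still safe---but this deserves a word; the paper's own proof passes over the same point silently.
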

\begin{proof}
It is easy to check both statements. For the first it suffices to show that $v_{l,k}(\Delta) = v_{l,k}(\Delta + l^{2k}) \mod {l^{2k}}$. This is clear when $l^{2k}\mid \Delta$ by the definition of $v_{l,k}$. Otherwise, when $l^{2k}\nmid \Delta$, we want to show that $v_l(\Delta) = v_l (\Delta + l^{2k})$. This is easy to see from Equation \eqref{eqn: v_l original}, as both $\delta$s are the same for both $\Delta$ and $\Delta + l^{2k}$, and we also have $\left(\frac{\Delta/l^{2\delta}}{l}\right) = \left(\frac{(\Delta+l^{2k})/l^{2\delta}}{l}\right)$.

For the second statement, by the definition above, $v_{l,k}(\Delta)\neq v_l(\Delta)$ only when $\Delta \equiv 0\mod l^{2k}$. In this case, by Equation \eqref{eqn: v_l original} we see that $\delta \ge k-1$ and so the error is $O(l^{-k})$.
\end{proof}
Because $v_{l,k}(\Delta)$ are good approximations of $v_l(\Delta)$, we replace the factors $v_l(\Delta)$ in Proposition \ref{prop: first step} with $v_{l,k}(\Delta)$ to obtain Proposition \ref{prop: second step} as follows.
\begin{proof}[Proof of Proposition \ref{prop: second step}]
Note that for $k>1$,
\begin{equation}\label{eqn: prod v_l,k approximate prod v_l}
\log \left(\prod_{l<l_0} v_{l,k}(\Delta)\right) = \log \left(\prod_{l<l_0} v_{l}(\Delta)\right) + O(2^{-k})
\end{equation}
by summing up the second statement of Lemma \ref{lem: basic v_l,k facts} over $l< l_0$, and noting that $2^{-k}+3^{-k}+\ldots = O(2^{-k})$ for $k>1$. As $k$ is an unbounded increasing function we have $O(2^{-k})=o(1)$, so $$\prod_{l<l_0} v_{l}(\Delta) = (1+o(1))\prod_{l<l_0} v_{l,k}(\Delta).$$ The proposition then follows directly from Proposition \ref{prop: first step} by substituting the above equation into Equation \eqref{eqn: truncation estimate} to obtain Equation \eqref{eqn: rounded off estimate}.
\end{proof}
The periodicity of the factors $v_{l,k}(\Delta)$ in the trace $t$ leads to independence of the factors $v_{l,k}(\Delta)$ over a large enough interval for the trace $t$.
\begin{proposition}\label{prop: period is T}
The product
$$\prod_{l<l_0} v_{l,k}(\Delta) = \prod_{l<l_0} v_{l,k}(t^2-4p)$$
is periodic in $t$ with period dividing $T = \left(\prod_{l< l_0} l\right)^{2k} = O(2^{4kl_0})$. Furthermore, the factors $\{v_{l,k}(\Delta)\}_{l<l_0}$ are independent in any interval of length $T$.
\end{proposition}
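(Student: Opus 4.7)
The plan is to combine the per-factor periodicity of Lemma \ref{lem: basic v_l,k facts} with the Chinese Remainder Theorem; both the periodicity claim and the independence claim should then follow immediately, together with a standard elementary bound on primorials for the size estimate on $T$.

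First, I would handle periodicity. Lemma \ref{lem: basic v_l,k facts} says each factor $v_{l,k}(\Delta)$ is periodic in $t$ with period dividing $l^{2k}$. The product over distinct primes $l<l_0$ is therefore periodic in $t$ with period dividing $\mathrm{lcm}\{\,l^{2k}:l<l_0\,\}$, and since the prime powers $l^{2k}$ are pairwise coprime this least common multiple equals $\prod_{l<l_0}l^{2k}=T$. For the bound $T=O(2^{4kl_0})$, I would invoke the elementary Chebyshev estimate $\prod_{l<l_0}l\le 4^{l_0}$ (provable by induction using central binomial coefficients, with no analytic input required), then raise to the $2k$-th power to get $T\le 4^{2kl_0}=2^{4kl_0}$.

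For independence on an interval of length $T$, the key observation is that by Lemma \ref{lem: basic v_l,k facts} each $v_{l,k}(\Delta)$ depends on $t$ only through the residue $t\bmod l^{2k}$. Since the moduli $l^{2k}$ for distinct primes $l<l_0$ are pairwise coprime, the Chinese Remainder Theorem gives a bijection between $\Z/T\Z$ and $\prod_{l<l_0}\Z/l^{2k}\Z$. Consequently, if $t$ is chosen uniformly at random in any interval of length $T$, then the tuple $(t\bmod l^{2k})_{l<l_0}$ is uniformly distributed on the product space, and in particular its coordinates are mutually independent. Since $v_{l,k}(\Delta)$ is a deterministic function of $t\bmod l^{2k}$, the collection $\{v_{l,k}(\Delta)\}_{l<l_0}$ is independent.

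I do not anticipate any genuine obstacle here: the proposition is essentially a bookkeeping consequence of the per-factor periodicity that has already been established, combined with CRT. The only point that requires a small separate ingredient is the primorial bound $\prod_{l<l_0}l\le 4^{l_0}$, which is elementary and well-known. Everything else is immediate from the definitions.
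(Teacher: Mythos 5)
Your proof is correct and follows exactly the same route as the paper's: the per-factor periodicity from Lemma \ref{lem: basic v_l,k facts} combined with the Chinese Remainder Theorem, plus the elementary primorial bound for the size estimate. The paper states this in one sentence; you have simply filled in the details.
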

\begin{proof}
This follows directly from the first part of Lemma \ref{lem: basic v_l,k facts} and by the Chinese Remainder Theorem.
\end{proof}
Unfortunately, as explained in the proof overview, given the values of $l_0$ in Proposition \ref{prop: second step}, $T$ is much greater than $\sqrt p$, so we cannot apply this directly.
\section{2D vertical Sato-Tate}\label{sec: pf of 2D sato-tate}
We will not require GRH for the results we derive in this section. Hence, for this section, we fix any $\epsilon>0$ and set $l_0=g_\kappa (p)$ for some $\kappa$ such that the unconditional statement in Proposition \ref{prop: second step} is true.
\subsection{Further truncation}
Recall that the truncation we obtained in the previous section was not enough for complete independence. We will use pairwise independence to further truncate the product. Notice that any two variables $v_{l,k}$ and $v_{l',k}$ are independent over an interval $(ll')^{2k}\le l_0^{4k} \ll p$. This leads to almost independence over $t\in [-2\sqrt{p},2\sqrt{p}]$. We quantify this in the following lemma.
\begin{lemma}\label{lem: individual var and covar}
Suppose we choose $t\in [-2\sqrt{p},2\sqrt{p}]$ uniformly at random. Then, for any prime $l$ we have
\begin{equation*}
\Var\left[\log(v_{l,k}(\Delta))\right] = O(l^{-2}).
\end{equation*}
Furthermore, for distinct primes $l,l' <l_0$ we have
\begin{equation*}
\Cov\left(\log(v_{l,k}(\Delta)), \log(v_{l',k}(\Delta))\right) = O(l_0^{4k-2}p^{-1/2}).
\end{equation*}
\end{lemma}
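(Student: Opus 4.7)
The variance bound is essentially pointwise. Combining Lemma~\ref{lem: v_l facts}(b) with Definition~\ref{defn: v_l,k definition} shows $v_{l,k}(\Delta) \in [(1+l^{-1})^{-1},(1-l^{-1})^{-1}]$ for every $\Delta$: if $\Delta \not\equiv 0 \pmod{l^{2k}}$ this is just the lemma applied to $v_l$, and if $\Delta \equiv 0 \pmod{l^{2k}}$ the explicit value $(1-l^{-2})^{-1}(1+l^{-1})$ simplifies to $(1-l^{-1})^{-1}$. Hence $\log v_{l,k}(\Delta) = O(l^{-1})$ pointwise, and $\Var[\log v_{l,k}(\Delta)] \le \E[(\log v_{l,k}(\Delta))^2] = O(l^{-2})$ irrespective of the distribution of $t$.

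For the covariance, the plan is to exploit the Chinese Remainder Theorem independence recorded in Proposition~\ref{prop: period is T}. Set $f(t) := \log v_{l,k}(t^2-4p)$, $g(t) := \log v_{l',k}(t^2-4p)$, and $T := (ll')^{2k}$. By Lemma~\ref{lem: basic v_l,k facts} both $f$ and $g$ are $T$-periodic, and over any length-$T$ interval they are independent, so the period-average satisfies $\E_T[fg] = \E_T[f]\E_T[g]$. I would then compare this to the average over $I := [-2\sqrt p,2\sqrt p]\cap\Z$: writing $N := |I|$ and $N = qT + r$ with $0 \le r < T$, I partition $I$ into $q$ full periods plus a length-$r$ remainder, and decompose each of $\E_I[fg],\E_I[f],\E_I[g]$ as $(1-\rho)\E_T[\cdot] + \rho\,\E_{\mathrm{rem}}[\cdot]$ with $\rho := r/N$. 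The pointwise bound from the first paragraph gives $\E_T[f], \E_{\mathrm{rem}}[f] = O(l^{-1})$, and analogously $O(l'^{-1})$ for $g$ and $O((ll')^{-1})$ for $fg$.

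Substituting into $\Cov_I(f,g) = \E_I[fg] - \E_I[f]\E_I[g]$ and collecting, the coefficient $(1-\rho)-(1-\rho)^2 = \rho(1-\rho)$ of $\E_T[f]\E_T[g]$ is $O(\rho)$, and each of the remaining cross-terms carries either $\rho$ or $\rho^2$ together with pointwise-bounded factors of order $l^{-1}$ and $l'^{-1}$. Hence every term is $O(\rho/(ll'))$, yielding $\Cov_I(f,g) = O(\rho/(ll'))$. Since $\rho \le T/N = (ll')^{2k}/\Theta(\sqrt p)$ and the hypothesis $l,l'<l_0$ forces $ll' < l_0^2$, this is $O((ll')^{2k-1}p^{-1/2}) = O(l_0^{4k-2}p^{-1/2})$, as claimed. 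The only minor subtlety is alignment: $I$ need not begin at a multiple of $T$, but $T$-periodicity of $f,g$ lets me translate so it does, with any misalignment absorbed into the length-$r$ remainder. I do not expect any conceptual obstacle beyond this bookkeeping.
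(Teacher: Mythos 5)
Your proof is correct and follows essentially the same route as the paper: the variance bound is the pointwise bound $\log v_{l,k}(\Delta)=O(l^{-1})$ squared, and the covariance bound comes from CRT independence of $v_{l,k}$ and $v_{l',k}$ over full periods of length $T=(ll')^{2k}$, with the remainder interval of length $<T$ contributing an error of order $\rho\cdot(ll')^{-1}$ where $\rho=O(T/\sqrt p)$. Your explicit $(1-\rho)/\rho$ decomposition of all three of $\E_I[fg]$, $\E_I[f]$, $\E_I[g]$ is in fact a bit more careful than the paper's phrasing (which loosely asserts that $fg-\E[f]\E[g]$ averages to zero on full periods, conflating the period-average $\E_T[f]$ with the full-interval average $\E_I[f]$), but the conclusion and the orders of magnitude are identical.
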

\begin{proof}
The first statement is clear by the definition of $v_{l,k}$ and the fact that $\log(v_l(\Delta))=O(l^{-1})$ by Lemma \ref{lem: v_l facts}(b). For the second statement,
by the first part of Lemma \ref{lem: basic v_l,k facts} and the Chinese Remainder Theorem, $v_{l,k}(\Delta)$ and $v_{l',k}(\Delta)$ are independent over any interval of length $T=(ll')^{2k}$. Split the interval $t\in [-2\sqrt{p},2\sqrt{p}]$ into intervals of length $T$, with one interval $I$ of length $0\le L <T$ remaining. Note that
$$\Cov\left(\log(v_{l,k}(\Delta)), \log(v_{l',k}(\Delta))\right) = \E[\log(v_{l,k}(\Delta))\log(v_{l',k}(\Delta))-\E[\log(v_{l,k}(\Delta)]\E[\log(v_{l',k}(\Delta))]].$$
On the intervals with length $T$, $v_{l,k}(\Delta)$ and $v_{l',k}(\Delta)$ are independent, which implies that the average of $\log(v_{l,k}(\Delta))\log(v_{l',k}(\Delta))-\E[\log(v_{l,k}(\Delta)]\E[\log(v_{l',k}(\Delta))]$ is $0$ on the interval. Thus, the only contribution to the covariance is when $t\in I$, which happens with probability $O((ll')^{2k}p^{-1/2})$. Furthermore, by definition of $v_{l,k}$, both $\log(v_{l,k}(\Delta))$ and $\log(v_{l',k}(\Delta))$ are $O(l^{-1})$ and $O(l'^{-1})$ respectively. Combining both facts gives $$\Cov\left(\log(v_{l,k}(\Delta)), \log(v_{l',k}(\Delta))\right) = O((ll')^{2k-1}p^{-1/2}) = O(l_0^{4k-2}p^{-1/2}).$$
\end{proof}
Next, we use each individual variance and covariance to bound the variance of the following sum.
\begin{proposition}\label{prop: variance sum}
Suppose we choose $t\in [-2\sqrt{p},2\sqrt{p}]$ uniformly at random. Then
\begin{equation*}
\Var\left[\log\left(\prod_{l_1\le l < l_0} v_{l,k}(\Delta) \right)\right] = O(l_1^{-1}+l_0^{4k}p^{-1/2}).
\end{equation*}
\end{proposition}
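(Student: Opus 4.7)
The plan is to expand the logarithm of the product into a sum, $\log\prod_{l_1\le l<l_0} v_{l,k}(\Delta) = \sum_{l_1\le l<l_0} \log v_{l,k}(\Delta)$, and apply the standard decomposition of the variance of a sum into diagonal and off-diagonal contributions:
\begin{equation*}
\Var\left[\sum_{l_1\le l<l_0} \log v_{l,k}(\Delta)\right] = \sum_{l_1\le l<l_0} \Var[\log v_{l,k}(\Delta)] + \sum_{\substack{l_1\le l,l'<l_0 \\ l\ne l'}} \Cov\bigl(\log v_{l,k}(\Delta),\log v_{l',k}(\Delta)\bigr).
\end{equation*}
Both pieces are then controlled directly by Lemma \ref{lem: individual var and covar}.

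For the diagonal, the first part of Lemma \ref{lem: individual var and covar} gives $\Var[\log v_{l,k}(\Delta)] = O(l^{-2})$, and summing over primes $l\ge l_1$ yields $\sum_{l\ge l_1} O(l^{-2}) = O(l_1^{-1})$ by comparison with the tail of $\sum n^{-2}$. This produces the first term in the claimed bound.

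For the off-diagonal contribution, I would use the uniform covariance bound $\Cov(\log v_{l,k}(\Delta), \log v_{l',k}(\Delta)) = O(l_0^{4k-2}p^{-1/2})$ from Lemma \ref{lem: individual var and covar} and count pairs. Since $\pi(l_0) = O(l_0/\log l_0)$, there are $O(l_0^2/(\log l_0)^2)$ ordered pairs of distinct primes less than $l_0$, so summing the uniform bound gives
\begin{equation*}
\sum_{\substack{l_1\le l,l'<l_0 \\ l\ne l'}} O(l_0^{4k-2} p^{-1/2}) = O\bigl(l_0^{4k} p^{-1/2}\bigr),
\end{equation*}
which is the second term. (One can slightly sharpen this by using the finer $(ll')^{2k-1}p^{-1/2}$ bound implicit in the proof of Lemma \ref{lem: individual var and covar} together with $\sum_{l<l_0} l^{2k-1}=O(l_0^{2k})$, but the cruder estimate already suffices.) Adding the two contributions gives the proposition.

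There is no real obstacle here; the lemma has already isolated the two key phenomena (almost sure cancellation at large primes, and pairwise near-independence from the Chinese Remainder Theorem inside $[-2\sqrt p,2\sqrt p]$), and the proposition is just their bookkeeping. The only care needed is to ensure one counts pairs correctly and uses the tail estimate $\sum_{l\ge l_1} l^{-2} = O(l_1^{-1})$ rather than a bound depending on $l_0$, so that the first term does not inherit any $l_0$-dependence.
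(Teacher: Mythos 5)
Your proof is correct and follows essentially the same route as the paper: decompose the variance of the sum into diagonal and off-diagonal parts, bound the diagonal by $\sum_{l\ge l_1} O(l^{-2}) = O(l_1^{-1})$, and bound the $O(l_0^2)$ covariance terms uniformly by $O(l_0^{4k-2}p^{-1/2})$ via Lemma \ref{lem: individual var and covar}. The paper uses the crude count of $O(l_0^2)$ pairs rather than the prime-counting refinement, but the bookkeeping is otherwise identical.
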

\begin{proof}
We have
\begin{equation*}
\begin{split}
&\ \Var\left[\log\left(\prod_{l_1\le l < l_0} v_{l,k}(\Delta) \right)\right] \\
=&\ \sum_{l_1\le l \le l_0} \Var[\log(v_{l,k}(\Delta))] +  2\sum_{l_1\le l < l' \le l_0} \Cov\left(\log(v_{l,k}(\Delta)), \log(v_{l',k}(\Delta))\right) 
\end{split}
\end{equation*}
where by Lemma \ref{lem: individual var and covar} the first sum is $O(l_1^{-1})$ as $l_1^{-2}+(l_1+1)^{-2}+\cdots = O(l_1^{-1})$, and the second sum has $O(l_0^2)$ terms each of size $O(l_0^{4k-2}p^{-1/2})$.
\end{proof}
\begin{remark}
We note that the initial truncation using the effective Chebotarev's density theorems was essential for this step as the variance has an $l_0^{4k}$ term in it. Intuitively, this is because if we did not truncate the infinite product, we would not have pairwise independence between $v_{l,k}(\Delta)$ and $v_{l',k}(\Delta)$ if $l$ and $l'$ are too big.
\end{remark}
We apply Chebyshev's inequality to prove Proposition \ref{prop: chebyshev result}.
\begin{proof}[Proof of Proposition \ref{prop: chebyshev result}] 
Define $v(p)=\Var\left[\log\left(\prod_{l_1\le l < l_0} v_{l,k}(\Delta) \right)\right]$. Since $k = O((\log p)^{1-\epsilon})$, $l_0 = g_{\kappa}(p)$ and $l_1$ is an unbounded increasing function of $p$, by substitution we see that $l_1^{-1}+l_0^{4k}p^{-1/2}=o(1)$, so Proposition \ref{prop: variance sum} tells us that $v(p)=o(1)$. By Chebyshev's inequality,
\begin{equation*}
\mathbb P \left[\log\left(\prod_{l_1\le l < l_0} v_{l,k}(\Delta) \right) \ge v(p)^{1/3} \right] \le v(p)^{1/3}.
\end{equation*}
Since $v(p)^{1/3} = o(1)$, this says that for almost all $t\in [-2\sqrt p, 2\sqrt p]$ we have $\log\left(\prod_{l_1\le l < l_0} v_{l,k}(\Delta) \right) = o(1)$ and thus $$\prod_{l_1\le l < l_0} v_{l,k}(\Delta) = 1+o(1).$$ By the unconditional statement of Proposition \ref{prop: second step}, we substitute the above equation into Equation \eqref{eqn: rounded off estimate} to obtain
$$N_{t,p}= (1+ o(1)) \frac{\sqrt{-\Delta}}{\pi}\prod_{l<l_0}v_{l,k}(\Delta) = (1+ o(1)) \frac{\sqrt{-\Delta}}{\pi}\prod_{l<l_1}v_{l,k}(\Delta)$$
for almost all $t\in [-2\sqrt p, 2\sqrt p]$.
\end{proof}
\subsection{Proof of 2D vertical Sato-Tate}
Fix $l_1$ and $k$ to satisfy the conditions in Proposition \ref{prop: chebyshev result} and let $T\coloneqq \prod_{l<l_1}l^{2k} = o(\sqrt p)$. Let the number of exceptions in Proposition \ref{prop: chebyshev result} be $e(p)=o(\sqrt p)$. For each $p$, choose $T'$ to be a multiple of $T$, and do this such that the growth rate of $T'$ is between $e(p)$ and $\sqrt p$, i.e. $e(p) = o(T')$ and $T' = o(\sqrt p)$. We will use this as our new period.

We wish to prove that $(t/\sqrt p, N_{t,p}/2\sqrt p)\sim (X,\frac{1}{2\pi}\sqrt{4-X^2}Z_p)$ converges in the Lévy-Prokhorov metric. To do this, we first look at both distributions on small vertical slices $[t_0/\sqrt p, (t_0+T')/\sqrt p)\times [0,\infty)$. On such a small vertical slice, we prove that the second coordinates of both distributions, $N_{t,p}/2\sqrt p$ and $\frac{1}{2\pi}\sqrt{4-X^2}Z_p$, get closer as $p\rightarrow \infty$.
\begin{proposition}\label{prop: small vertical slices}
For a subinterval $[t_0,t_0+T')\subseteq [-2\sqrt p, 2\sqrt p]$ of length $T'$, let $\alpha_{p,t_0}$ be the distribution of $N_{t,p}/2\sqrt p$ where the trace $t$ is chosen uniformly at random in $[t_0,t_0+T')$. Also, let $\alpha_{p,t_0}^{heu}$ be the distribution of $\frac{1}{2\pi}\sqrt{4-X^2}Z_p$ as an independent product where $X$ is the uniform real random variable in $\left[t_0/\sqrt p,(t_0+T')/\sqrt p\right)$. Then, we have
$$\sup_{t_0}\left(\pi(\alpha_{p,t_0},\alpha_{p,t_0}^{heu})\right)\rightarrow 0$$
as $p\rightarrow \infty$.
\end{proposition}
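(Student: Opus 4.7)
The overall strategy is to bound $\pi(\alpha_{p,t_0},\alpha_{p,t_0}^{heu})$ by the triangle inequality for the Lévy–Prokhorov metric along a short chain of intermediate distributions, each differing from the next by $o(1)$ uniformly in $t_0$. Throughout, I will use Lemma \ref{lem: bound metric} and Corollary \ref{cor: variance metric} as the basic tools to translate pointwise or moment-level closeness into closeness in $\pi$.

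The chain I have in mind is the following. First, by Proposition \ref{prop: chebyshev result}, for $t$ uniform on $[t_0,t_0+T')$ we have $N_{t,p}/2\sqrt p = (1+o(1))\frac{\sqrt{4-(t/\sqrt p)^2}}{2\pi}\prod_{l<l_1}v_{l,k}(\Delta)$ outside at most $e(p) = o(\sqrt p)$ exceptional traces in $[-2\sqrt p,2\sqrt p]$; since $T'$ was chosen with $e(p) = o(T')$, the fraction of exceptions inside $[t_0,t_0+T')$ is at most $e(p)/T' = o(1)$ uniformly in $t_0$, so by Lemma \ref{lem: bound metric} the exceptional traces contribute only $o(1)$ to $\pi$. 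Second, since $T'/\sqrt p = o(1)$, the factor $\sqrt{4-(t/\sqrt p)^2}$ varies by at most $O(T'/\sqrt p) = o(1)$ across the interval, so we may replace it by the constant $\sqrt{4-(t_0/\sqrt p)^2}$ at cost $o(1)$ uniformly in $t_0$. Third, because $T'$ is a multiple of the period $T = \prod_{l<l_1}l^{2k}$ from Proposition \ref{prop: period is T}, the Chinese Remainder Theorem gives that the distribution of $\prod_{l<l_1}v_{l,k}(\Delta)$ with $t$ uniform on $[t_0,t_0+T')$ is exactly equal to that of the independent product $\prod_{l<l_1}Y_{l,p,k}$, uniformly in $t_0$. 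Fourth, $\prod_{l<l_1}Y_{l,p,k}$ is $o(1)$-close in $\pi$ to $Z_p$: by Lemma \ref{lem: basic v_l,k facts}, $\log Y_{l,p,k} - \log Y_{l,p} = O(l^{-k})$, so summing over $l<l_1$ shifts the log by $O(2^{-k}) = o(1)$, while the tail $\prod_{l\ge l_1}Y_{l,p}$ is $o(1)$-close to $1$ by the same variance calculation as in the proof of Proposition \ref{prop: def Z_p} (using Lemma \ref{lem: Y_l,p expectation variance} and then Corollary \ref{cor: variance metric}). Finally, the replacement of $\sqrt{4-(t_0/\sqrt p)^2}$ by $\sqrt{4-X^2}$ on the heuristic side, where $X$ is uniform on an interval of length $T'/\sqrt p = o(1)$, is symmetric to step two and costs $o(1)$.

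The main thing to be careful about is uniformity in $t_0$. Proposition \ref{prop: chebyshev result} gives a $(1+o(1))$ bound whose rate does not depend on $t$ (outside the exceptional set), the fraction-of-exceptions bound $e(p)/T'$ is also independent of $t_0$, the CRT step is an exact identity, the comparison between $\prod_{l<l_1}Y_{l,p,k}$ and $Z_p$ is intrinsic (not depending on $t_0$ at all), and the two $\sqrt{4-(\cdot)^2}$ replacements are controlled by $T'/\sqrt p$ independently of $t_0$. A secondary technical point is that although $\prod_{l<l_1}v_{l,k}(\Delta)$ may take large values, Corollary \ref{cor: variance metric} lets us convert the mean-plus-variance control on the log into the desired control in the Lévy–Prokhorov metric; it is precisely this feature that makes the Lévy–Prokhorov (rather than uniform) formulation of convergence natural here. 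I expect the most delicate bookkeeping step to be the first one, where one has to show that the at most $e(p)$ "bad" traces really do contribute only $o(1)$ to $\pi$ uniformly in $t_0$, since this is where the choice of $T'$ satisfying $e(p) = o(T') = o(\sqrt p)$ is used essentially.
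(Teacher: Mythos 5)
Your proposal matches the paper's proof in both structure and technique: the same chain of intermediate distributions (exceptional-trace bound from Proposition~\ref{prop: chebyshev result} via $e(p)=o(T')$, the square-root replacement costing $T'/\sqrt p = o(1)$, the exact CRT identity $\prod_{l<l_1}v_{l,k}(\Delta) \overset{d}{=} \prod_{l<l_1}Y_{l,p,k}$ since $T\mid T'$, the $O(2^{-k})$ coupling between $Y_{l,p,k}$ and $Y_{l,p}$, and the $O(l_1^{-1})$ variance bound on the tail of $Z_p$), with Lemma~\ref{lem: bound metric} doing the conversion to the Lévy--Prokhorov metric, exactly as in the paper. The one place the paper is more careful than you are is the ``large values of $\prod v_{l,k}$'' point you flag at the end: rather than invoking Corollary~\ref{cor: variance metric} there, the paper applies Chebyshev directly to $\sum_{l<l_1}\log v_{l,k}(\Delta)$ to get $\prod_{l<l_1}v_{l,k}(\Delta)\le f_2(p)$ off an $o(1)$-fraction of traces, then chooses $f_2(p)\to\infty$ slowly enough that the multiplicative $(1+o(1))$ and additive $o(1)$ errors times $f_2(p)$ are still $o(1)$; you should spell that coupling of rates out, since it is what actually absorbs the large values, but this is the same mechanism you have in mind.
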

\begin{proof}
Define the intermediate distribution $\alpha_{p,t_0}'$ to be the distribution of the random variable $$\frac{\sqrt{4-(t_0/\sqrt p)^2}}{2\pi}  \prod_{l<l_1}Y_{l,p,k},$$
where we take the independent product of $Y_{l,p,k}$. 

Note that $\alpha_{p,t_0}'$ is also the distribution of $\frac{\sqrt{4-(t_0/\sqrt p)^2}}{2\pi}  \prod_{l<l_1}v_{l,k}(\Delta)$ where $t$ is chosen uniformly at random in $[t_0,t_0+T')$. This is because the factors $v_{l,k}(\Delta)$ are independent over any interval of length $T$ by Proposition \ref{prop: period is T}, and hence are independent over any interval of length $T'$ as well. Now, it suffices to show that both $\pi(\alpha_{p,t_0},\alpha_{p,t_0}') \rightarrow 0$ and $\pi(\alpha_{p,t_0}',\alpha_{p,t_0}^{heu}) \rightarrow 0$ uniformly in $t_0$ as $p\rightarrow \infty$.

For the first, note that by Proposition \ref{prop: chebyshev result} we have 
\begin{equation}\label{eqn: pf 2D eqn 1}
N_{t,p}= (1+ o(1)) \frac{\sqrt{4-(t/\sqrt p)^2}}{2\pi}\prod_{l<l_1}v_{l,k}(\Delta)
\end{equation}
with at most $e(p)=o(T')$ exceptions by our choice of $T'$. Furthermore, since $|t/\sqrt p-t_0/\sqrt p|\le T'/\sqrt p = o(1)$, we get 
\begin{equation}\label{eqn: pf 2D eqn 2}
\sqrt{4-\left(\frac{t}{\sqrt p}\right)^2} = \sqrt{4-\left(\frac{t_0}{\sqrt p}\right)^2} + o(1),
\end{equation}
and this bound is clearly uniform in $t_0$. Since $v_{l,k}(\Delta)$ is independent in the interval $[t_0,t_0+T')$, we have
$$\Var\left[\sum_{l<l_1} \log(v_{l,k}(\Delta))\right] = \sum_{l<l_1} \Var[\log(v_{l,k}(\Delta))].$$ 
By definition of $v_{l,k}$, we have $\Var[\log(v_{l,k}(\Delta))] = O(l^{-2})$. Furthermore, since $\sum_{l} l^{-2}$ converges, by the above equation there exists a universal constant $C$ independent of $p$ where $\Var[\sum_{l<l_1} \log(v_{l,k}(\Delta))] \le C$. Then, for any increasing unbounded function $f_1(p)$, we have by Chebyshev's inequality that almost all $t\in [t_0,t_0+T')$ (with at most $o(T')$ exceptional values of $t$) satisfy
$$\sum_{l<l_1} \log(v_{l,k}(\Delta)) \le f_1(p),$$
thus implying that for any increasing unbounded function $f_2(p)$ that almost all $t\in [t_0,t_0+T')$ satisfy
\begin{equation}\label{eqn: pf 2D eqn 3}
\prod_{l<l_1} v_{l,k}(\Delta) \le f_2(p).
\end{equation}
Combining Equations \eqref{eqn: pf 2D eqn 1}, \eqref{eqn: pf 2D eqn 2} and \eqref{eqn: pf 2D eqn 3}, we obtain that for almost all $t\in [t_0,t_0+T')$ that
\begin{equation*}
\begin{split}
N_{t,p} &= (1+o(1))\left(\frac{\sqrt{4-(t_0/\sqrt p)^2}+o(1)}{2\pi}\right)\prod_{l<l_1}v_{l,k}(\Delta) \\
&= \frac{\sqrt{4-(t_0/\sqrt p)^2}}{2\pi}\prod_{l<l_1}v_{l,k}(\Delta) + o(1) \\
\end{split}
\end{equation*}
by choosing the function $f_2(p)$ to grow slow enough such that both $o(1)$ terms decrease faster than $\frac{1}{f_2(p)}$. Rephrasing this, for any $\epsilon$, we have for all sufficiently large $p$ that $|\alpha_{p,t_0}-\alpha_{p,t_0}'|<\epsilon$ with probability greater than $1-\epsilon$. Thus, by Lemma \ref{lem: bound metric} we conclude that $\pi(\alpha_{p,t_0},\alpha_{p,t_0}') \rightarrow 0$ uniformly in $t_0$ as $p\rightarrow \infty$.

Now we show that $\pi(\alpha_{p,t_0}',\alpha_{p,t_0}^{heu}) \rightarrow 0$ uniformly in $t_0$ as $p\rightarrow \infty$. For this proof, recall that $Y_{l,p,k}$ is dependent on $Y_{l,p}$ for each $l$ and $p$ as follows as both share the same event space which is $\Z_l$. More explicitly, $Y_{l,p}$ is the random variable taking values $v_l(t^2-4p)$ where $t\in \Z_l$ is picked uniformly according to the Haar measure. Using the same random $t$, let $Y_{l,p,k}$ be the random variable taking values $v_{l,k}(t^2-4p)$. This is well-defined because the value of $t$ in $\Z_l$ determines its value modulo $l^{2k}$ via the map $\Z_l\rightarrow \Z/l^{2k}\Z$. Note that apart from this dependence, all other random variables live on different event spaces, and are thus independent (on the product event space).

By Lemma \ref{lem: basic v_l,k facts} and the fact that $Y_{l,p,k}$ and $Y_{l,p}$ stem from the same trace $t$, we have $\log(Y_{l,p,k}) = \log(Y_{l,p}) + O(l^{-k})$. Thus, in the same way as the proof of Proposition \ref{prop: second step}, we obtain for $k>1$ that
\begin{equation}\label{eqn: pf 2D eqn 4}
\log \left(\prod_{l<l_1} Y_{l,p,k}\right) = \log \left(\prod_{l<l_1} Y_{l,p}\right) + O(2^{-k}).
\end{equation}
By Lemma \ref{lem: Y_l,p expectation variance}, we have $\Var[\log(Y_{l,p})] = O(l^{-2})$, so by independence,
\begin{equation}\label{eqn: pf 2D eqn 5}
\Var\left[\log\left(\prod_{l\ge l_0}Y_{l,p}\right)\right] = \Var\left[\sum_{l\ge l_1}\log(Y_{l,k})\right] = \sum_{l\ge l_1}\Var\left[\log(Y_{l,k})\right] = O(l_1^{-1})
\end{equation}
as $\sum_{l\ge l_1}l^{-2} = O(l_1^{-1})$. In the same way as the earlier part of the proof, by combining Equations \eqref{eqn: pf 2D eqn 2}, \eqref{eqn: pf 2D eqn 4} and \eqref{eqn: pf 2D eqn 5}, and then using Chebyshev's inequality, we obtain that we almost always have
$$\frac{\sqrt{4-(t_0/\sqrt p)^2}}{2\pi}  \prod_{l<l_1}Y_{l,p,k} = \frac{\sqrt{4-X^2}}{2\pi} Z_p + o(1),$$
so likewise by Lemma \ref{lem: bound metric} we conclude that $\pi(\alpha_{p,t_0}',\alpha_{p,t_0}^{heu}) \rightarrow 0$ uniformly in $t_0$ as $p\rightarrow \infty$.
\end{proof}
To complete the proof of Theorem \ref{thm: 2D vertical sato-tate}, we decompose $[-2,2]\times [0,\infty)$ into small vertical slices $[t_0/\sqrt p,(t_0+T')/\sqrt p)\times [0,\infty)$ and then use Proposition \ref{prop: small vertical slices} which says that we have good convergence on these small vertical slices.
\begin{proof}[Proof of Theorem \ref{thm: 2D vertical sato-tate}]
We wish to show that for every $\epsilon>0$ that for sufficiently large $p$, $\pi(\rho_p,\rho_p^{heu})\le \epsilon$. Fix some $\epsilon>0$. By the definition of the Lévy-Prokhorov metric, we need to show that for all $A\in \mathcal B(\R^2)$ that both $\rho_p(A)\le \rho_p^{heu}(A^{\epsilon})+\epsilon$ and $\rho_p^{heu}(A)\le \rho_p(A^{\epsilon})+\epsilon$.

Both distributions are supported on $[-2,2]\times [0,\infty)$, which we split into small vertical slices as follows. Let $N=N(p) =\lfloor \frac{4\sqrt p}{T'}\rfloor$, and note that $N\rightarrow \infty$ as $p\rightarrow \infty$ because $T'=o(\sqrt p)$. Define the intervals $I_n = \left[\frac{nT'}{\sqrt p}-2, \frac{(n+1)T'}{\sqrt p}-2\right)$ and regions $C_n = I_n \times [0,\infty)$ for integers $0\le n \le N-1$. For convenience, denote the incomplete interval that was left over to be $I_N = \left[\frac{NT'}{\sqrt p}-2, 2\right]$ with the incomplete region $C_N = I_N\times [0,\infty)$. Note that $\{C_n\}_{0\le n\le N}$ are disjoint and cover all of $[-2,2]\times [0, \infty)$.

Fix any $A\in \mathcal B(\R^2)$. Let $q$ be the projection to the second component. Then, let $L_n = q(A\cap C_n)$ for $0\le n\le N$, so we have 
\begin{equation}\label{eqn: slices to 2D proof eqn 1}
A\cap C_n \subseteq I_n \times q(A\cap C_n).
\end{equation}
On the other hand, we claim that for sufficiently large $p$ that 
\begin{equation}\label{eqn: slices to 2D proof eqn 2}
A^\epsilon \cap C_n \supseteq I_n \times (q(A\cap C_n))^{\epsilon/2}
\end{equation} for all $0\le n\le N$. Indeed, take $(x,y) \in I_n \times (q(A\cap C_n))^{\epsilon/2}$, then there exists some $y'$ with $|y'-y|\le \epsilon /2$ where $y'\in q(A\cap C_n)$. By definition, this means that there is a point $(x',y') \in A\cap C_n$. Since both $x,x'\in I_n$, $|x-x'|\le T'/\sqrt p = o(1)$. Thus, the distance between $(x,y)$ and $(x',y')$ is at most $\sqrt{o(1)^2+(\epsilon/2)^2}<\epsilon$ for sufficiently large $p$. Hence, $(x,y)\in A^\epsilon$ and so $(x,y)\in A^\epsilon \cap C_n$. 

Thus, we obtain the following for sufficiently large $p$.
\begin{equation*}
\begin{split}
\rho_p(A) &= \sum_{n=0}^N \rho_p(A\cap C_n) \\
&\le \sum_{n=0}^{N} \rho_p(I_n \times q(A\cap C_n)) \\
&= \rho_p(I_N \times q(A\cap C_N)) + \sum_{n=0}^{N-1} \frac{T'}{4\sqrt p}\alpha_{p,nT'}(q(A\cap C_n))\\
&\le \frac{T'}{4\sqrt p} + \sum_{n=0}^{N-1} \frac{T'}{4\sqrt p} \left[\alpha_{p,nT'}^{heu}\left((q(A\cap C_n))^{\epsilon/2}\right)+\epsilon/2\right]\\
&< \epsilon + \sum_{n=0}^{N-1} \frac{T'}{4\sqrt p} \alpha_{p,nT'}^{heu}\left((q(A\cap C_n))^{\epsilon/2}\right)\\
&= \epsilon + \sum_{n=0}^{N-1}\rho_p^{heu}\left(I_n \times (q(A\cap C_n))^{\epsilon/2}\right)\\
&\le \epsilon + \sum_{n=0}^{N-1}\rho_p^{heu}\left(A^{\epsilon}\cap C_n\right)\\
&\le \epsilon + \rho_p^{heu}(A^\epsilon).
\end{split}
\end{equation*}
The first line follows as $A$ is a disjoint union of $A\cap C_n$, and the second follows from Equation \eqref{eqn: slices to 2D proof eqn 1}. 
For the third line, we have $\rho_p(I_n \times q(A\cap C_n)) = \frac{T'}{4\sqrt p}\alpha_{p,nT'}(q(A\cap C_n))$ because there is a $T'/4\sqrt p$ probability that the normalized trace $t/\sqrt p$ lies in $I_n$, and by definition, $\alpha_{p,nT'}(q(A\cap C_n))$ is the probability that $N_{t,p}/2\sqrt p$ lies in $q(A\cap C_n)$ given that $t/\sqrt p$ lies in $I_n$. For the fourth line, we use a trivial bound for $\rho_p(I_N\times q(A\cap C_n))$, and we invoke Proposition \ref{prop: small vertical slices} which says that for sufficiently large $p$, $\pi(\alpha_{p,nT'},\alpha_{p,nT'}^{heu})<\epsilon/2$ for all $n$. Unravelling the definition of the Lévy-Prokhorov metric gives us the inequality. In the fifth line, we combine all the error terms as $T'/4\sqrt p<\epsilon/2$ for sufficiently large $p$, and the other errors are $(\sum_{n=0}^N \frac{T'}{4\sqrt p})\frac{\epsilon}{2}\le  \epsilon/2$. The explanation for the sixth line is similar to that for the third line, and the remaining lines follow from Equation \eqref{eqn: slices to 2D proof eqn 2} and the fact that $A^{\epsilon}$ is a disjoint union of $A^{\epsilon}\cap C_n$.

Lastly, we see that we can swap $\rho_p$ with $\rho_p^{heu}$, as well as $\alpha_{p,nT'}$ with $\alpha_{p,nT'}^{heu}$ to get $\rho_p^{heu}(A)\le \rho_p(A^{\epsilon})+\epsilon$ for all sufficiently large $p$. This is because the statement of Proposition \ref{prop: small vertical slices} is symmetric in $\alpha_{p,t_0}$ and $\alpha_{p,t_0}^{heu}$. 
\end{proof}
\subsection{Convergent subsequences of measures}\label{sec: convergent subsequences}
Now that we have proven Theorem \ref{thm: 2D vertical sato-tate}, we know that the distributions $\rho_p$ and $\rho_p^{heu}$ are close as $p\rightarrow \infty$. Since there is an explicit formula for $\rho_p^{heu}$, this tells us what $\rho_p$ look like in the $p\rightarrow \infty$ limit. In this section we discuss the convergence of the distributions $\rho_p$, or more generally for subsequences $\rho_{p_i}$. We will show that the distributions $\rho_p$ do not converge, but also that some subsequences $\rho_{p_i}$ do. Furthermore, we give a sufficient condition on subsequences $p_i$ such that the distributions $\rho_{p_i}$ converge. 

The first step to proving these results is to use Theorem \ref{thm: 2D vertical sato-tate} to reduce the convergence of $\rho_{p_i}$ to the convergence of $Z_{p_i}$.
\begin{corollary}\label{cor: converge equivalence lemma}
Let $p_1,p_2,\ldots$ be an increasing sequence of primes. The following are equivalent:
\begin{enumerate}[(\alph*)]
\item The sequence of distributions $\rho_{p_i}$ converges.
\item The sequence of distributions $\rho_{p_i}^{heu}$ converges.
\item The sequence of distributions $Z_{p_i}$ converges.
\end{enumerate}
\end{corollary}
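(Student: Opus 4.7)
The plan is to prove the chain (a) $\Leftrightarrow$ (b) $\Leftrightarrow$ (c). The first equivalence is essentially immediate from Theorem \ref{thm: 2D vertical sato-tate}, which asserts that $\rho_p$ and $\rho_p^{heu}$ become arbitrarily close in the Lévy--Prokhorov metric. The second equivalence relies on the structural fact that $\rho_p^{heu}$ is a continuous deterministic transformation of $Z_p$ paired with an independent uniform random variable on $[-2,2]$, so convergence of one distribution translates to convergence of the other via the continuous mapping theorem.

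For (a) $\Leftrightarrow$ (b), I would apply the triangle inequality for $\pi$. Theorem \ref{thm: 2D vertical sato-tate} gives $\pi(\rho_{p_i}, \rho_{p_i}^{heu}) \to 0$, so the sequence $\rho_{p_i}$ is Cauchy in $\pi$ if and only if $\rho_{p_i}^{heu}$ is. Since the Lévy--Prokhorov metric on the space of Borel probability measures on $\R^2$ is complete (Lemma \ref{lem: metric complete} applies verbatim in $\R^2$, since $\R^2$ is complete and separable), Cauchy and convergent sequences agree, and one sequence converges if and only if the other does, necessarily to the same limit.

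For (b) $\Rightarrow$ (c), I would use the continuous mapping theorem applied to the map $g(x,y) = \tfrac{2\pi y}{\sqrt{4-x^2}}$ defined on $\{(x,y) : |x| < 2\}$ and extended arbitrarily on $\{|x| = 2\}$. This map is continuous off the set $\{x = \pm 2\}$, and this discontinuity set has measure zero under the first marginal of every $\rho_{p_i}^{heu}$ (which is uniform on $[-2,2]$) and hence under any weak limit. If $(X,Y) \sim \rho_p^{heu}$ then $g(X,Y) = Z_p$ almost surely, so weak convergence of $\rho_{p_i}^{heu}$ forces weak convergence of $Z_{p_i}$. For (c) $\Rightarrow$ (b), suppose $Z_{p_i}$ converges in distribution to some $Z$. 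Since $X$ is independent of each $Z_{p_i}$ and has a fixed distribution, the joint distributions $(X, Z_{p_i})$ converge to $(X, Z)$ where $X$ and $Z$ are taken independent (a standard consequence of convergence of characteristic functions). Applying the continuous map $h(x,z) = (x, \tfrac{1}{2\pi}\sqrt{4-x^2}\, z)$ yields convergence of $\rho_{p_i}^{heu} = h_*(X, Z_{p_i})$.

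There is no real obstacle here; the corollary is essentially bookkeeping that combines Theorem \ref{thm: 2D vertical sato-tate} with the continuous mapping theorem and the coupling between $\rho_p^{heu}$ and $Z_p$. The only subtlety to be careful about is verifying that the discontinuity set of $g$ is null under the candidate weak limit, which is automatic because the first marginal is always uniform on $[-2,2]$ and hence places no mass on $\{\pm 2\}$.
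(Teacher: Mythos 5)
Your proof is correct and takes essentially the same route as the paper: (a) $\Leftrightarrow$ (b) from Theorem \ref{thm: 2D vertical sato-tate} and the metric structure, and (b) $\Leftrightarrow$ (c) from the definition of $\rho_p^{heu}$; the paper simply declares the latter ``clear'' while you spell it out via the continuous mapping theorem. (One tiny simplification: for (a) $\Leftrightarrow$ (b) the triangle inequality already transfers a limit directly, so invoking completeness/Cauchyness is unnecessary, though harmless.)
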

\begin{proof}
(a) and (b) are equivalent as by Theorem \ref{thm: 2D vertical sato-tate} we have $\pi(\rho_{p_i},\rho_{p_i}^{heu}) \rightarrow 0$ as $p_i\rightarrow \infty$, and because the Lévy-Prokhorov metric metricizes convergence in distribution for random variables. It is also clear that (b) and (c) are equivalent because $v_{p_i}^{heu}$ is defined to be the joint distribution of $(X,\frac{1}{2\pi}\sqrt{4-X^2}Z_p)$.
\end{proof}
Recall that $Z_{p} \coloneqq \prod_l Y_{l,p}$ is the infinite independent product of $Y_{l,p}$. For each $l$, there are only a few possible distributions for $Y_{l,p}$ as seen in Lemma \ref{lem: Y_l,p explicit distributions}. This motivates the following notation.
\begin{definition}
For odd $l$, define $Y_{l,0}'$, $Y_{l,-1}'$ and $Y_{l,+1}'$ to be random variables that have the distribution stated in part (a), (b) and (c) of the odd $l$ part of Lemma \ref{lem: Y_l,p explicit distributions} respectively. By this definition, the distribution of $Y_{l,p}$ is the same as the distribution of $Y'_{l,(\frac p l)}$.

For $l=2$, define $Y_{2,a}', Y_{2,b}', Y_{2,c}',Y_{2,d}'$ to be random variables corresponding to the distributions stated in part (a), (b), (c) and (d) of the $l=2$ part of Lemma \ref{lem: Y_l,p explicit distributions} respectively. These correspond to distributions of $Y_{2,p}$ for the $p=2$, $p\equiv 3\pmod 4$, $p \equiv 5\mod 8$ and $p\equiv 1\mod 8$ cases respectively.
\end{definition}
For convenience, introduce the function $f_2\colon \{\text{primes}\}\rightarrow \{a,b,c,d\}$ which sends the prime $p$ to the respective case as described above. We also extend this to other primes by setting $f_l\colon \{\text{primes}\} \rightarrow \{-1,0,+1\}$ sending $p$ to $(\frac p l)$. Then, by the above definition we have
$$Z_p = \prod_l Y'_{l,f_l(p)}$$
which is an infinite independent product.
\begin{proposition}\label{prop: Z_p_i converges}
Let $p_1,p_2,\ldots$ be an increasing sequence of primes. Suppose that for each prime $l$ that the value of $f_l(p_i)$ is eventually constant, and let this value be $e_l$. Then, 
$$Z_{p_i}\rightarrow \prod_l Y'_{l,e_l}$$
converges to the infinite independent product in distribution as $i\rightarrow \infty$.
\end{proposition}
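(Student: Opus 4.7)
The plan is to work in log-space: since exponentiation is continuous, convergence $Z_{p_i} \to \prod_l Y'_{l,e_l}$ in distribution will follow from $W_{p_i} := \log Z_{p_i} \to W_\infty := \sum_l \log Y'_{l,e_l}$ by the continuous mapping theorem, and this in turn is equivalent to $\pi(\mu_{W_{p_i}}, \mu_{W_\infty}) \to 0$ since the Lévy--Prokhorov metric metricizes weak convergence on $\R$. The argument will truncate each infinite sum at a large prime threshold $N$, show that the truncated heads eventually agree in distribution, and control both tails uniformly in the prime parameter.

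For each integer $N \geq 2$, write $W_p = S_N(p) + T_N(p)$ with $S_N(p) = \sum_{l < N} \log Y_{l,p}$ and $T_N(p) = \sum_{l \geq N} \log Y_{l,p}$, and analogously $W_\infty = S_N^\infty + T_N^\infty$ in terms of the variables $Y'_{l,e_l}$. Because every $Y'_{l,e_l}$ coincides in law with $Y_{l,p'}$ for some prime $p'$, Lemma \ref{lem: Y_l,p expectation variance} supplies the uniform-in-$p$ bounds $|\E[\log Y_{l,p}]| = O(l^{-2})$ and $\Var[\log Y_{l,p}] = O(l^{-2})$ for both collections. Summing over $l \geq N$ and exploiting independence across $l$ gives
$$|\E[T_N(p)]| = O(N^{-1}), \qquad \Var[T_N(p)] = O(N^{-1}),$$
with identical estimates for $T_N^\infty$ and implicit constants that do not depend on $p$. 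Corollary \ref{cor: variance metric} then yields
$$\pi(\mu_{W_p}, \mu_{S_N(p)}) = O(N^{-1/3}), \qquad \pi(\mu_{W_\infty}, \mu_{S_N^\infty}) = O(N^{-1/3}),$$
again uniformly in $p$.

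Because there are only finitely many primes below $N$ and $f_l(p_i) \to e_l$ is eventually constant for each $l$, there exists $i_0 = i_0(N)$ such that $f_l(p_i) = e_l$ for every $l < N$ and every $i \geq i_0$. For such $i$, each factor $Y_{l, p_i}$ has the same distribution as $Y'_{l,e_l}$ by the definition of the latter; combined with independence across $l$, the truncated heads $S_N(p_i)$ and $S_N^\infty$ are equal in law. The triangle inequality in the Lévy--Prokhorov metric then gives $\pi(\mu_{W_{p_i}}, \mu_{W_\infty}) = O(N^{-1/3})$ for all $i \geq i_0$. Given $\epsilon > 0$, pick $N$ so that the right side is $< \epsilon$ and then any $i \geq i_0(N)$; this closes out weak convergence of $W_{p_i}$, and hence of $Z_{p_i}$.

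The only substantive input is the uniform-in-$p$ tail estimate, which is essentially the calculation that made Proposition \ref{prop: def Z_p} itself go through; the rest is bookkeeping with the triangle inequality. I therefore do not anticipate a genuine obstacle, since Lemma \ref{lem: Y_l,p expectation variance} already produces moment bounds that are uniform over all primes $p$, and this uniformity is precisely what makes the simultaneous truncation across the sequence $p_i$ work.
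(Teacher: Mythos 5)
Your proof is correct and follows essentially the same strategy as the paper: pass to logarithms, split at a threshold prime, control both tails uniformly via the $O(l^{-2})$ moment bounds of Lemma \ref{lem: Y_l,p expectation variance} and Corollary \ref{cor: variance metric}, and use the eventual constancy of $f_l(p_i)$ to make the finite heads match. The only difference is bookkeeping: the paper couples $\sum_l \log Y'_{l,f_l(p_i)}$ and $\sum_l \log Y'_{l,e_l}$ on a common probability space so that matching head terms literally cancel, and applies Corollary \ref{cor: variance metric} once to the residual tail difference, whereas you bound $\pi(\mu_{W_{p_i}}, \mu_{S_N(p_i)})$ and $\pi(\mu_{W_\infty}, \mu_{S_N^\infty})$ separately, observe that the heads are \emph{equal in law} (so their L\'evy--Prokhorov distance is zero), and close with the triangle inequality. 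Your variant is marginally cleaner in that it never needs to specify a coupling between the two infinite families of random variables; the substance of the argument is identical.
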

\begin{proof}
It suffices to show that the logarithm of both sides converge, that is
$$\sum_{l} \log(Y'_{l,f_l(p_i)}) \rightarrow \sum_l \log(Y_{l,e_l}'),$$
where we use the infinite independent sum on both sides.
Note that by Lemma \ref{lem: Y_l,p expectation variance}, both $\Var[\log(Y'_{l,f_l(p_i)})]=O(l^{-2})$ and $\Var[\log(Y'_{l,e_l})]=O(l^{-2})$. Since the sum $\sum_l l^{-2}$ converges, there exists some $l'$ where 
$$\Var\left[\sum_{l\ge l'} \log(Y'_{l,f_l(p_i)}) - \sum_{l\ge l'} \log(Y_{l,e_l}')\right] = \sum_{l\ge l'} \Var[\log(Y'_{l,f_l(p_i)})] + \sum_{l\ge l'} \Var[\log(Y_{l,e_l}')] < \epsilon^3/8.$$
Again by Lemma \ref{lem: Y_l,p expectation variance}, we can repeat the same for expectation (adjusting $l'$ if necessary) to obtain 
$$\left| \E\left[\sum_{l\ge l'} \log(Y'_{l,f_l(p_i)}) - \sum_{l\ge l'} \log(Y_{l,e_l}')\right]\right|
< \epsilon /2.$$
By our assumption, there exists an $N$ such that for each $l<l'$, the term $f_l(p_i)=e_l$ for all $i\ge N$. Thus, for $i\ge N$,
$$\sum_l \log(Y'_{l,f_l(p_i)}) - \sum_l \log(Y_{l,e_l}') = \sum_{l\ge l'} \log(Y'_{l,f_l(p_i)}) - \sum_{l\ge l'} \log(Y_{l,e_l}').$$
Then, applying Corollary \ref{cor: variance metric} to the above three equations, we obtain that 
$$\pi\left(\mu_{(\sum_{l} \log(Y_{l,p_i}))}, \mu_{(\sum_l \log(Y_{l,e_l}'))} \right)< \epsilon$$
for all $i\ge N$.
\end{proof}
This directly implies Corollary \ref{cor: prime sequence converge}.
\begin{proof}[Proof of Corollary \ref{cor: prime sequence converge}]
By Corollary \ref{cor: converge equivalence lemma}, it suffices to show that $Z_{p_i}$ converges when $p_i$ satisfies the conditions. But the conditions imply that $f_l(p_i)$ is eventually constant, so Proposition \ref{prop: Z_p_i converges} implies that $Z_{p_i}$ converges in distribution.
\end{proof}
Note that in the statement of Corollary \ref{cor: prime sequence converge} we left the $p=2$ case out because an increasing sequence of primes cannot be eventually $2$. We also note that for each odd prime $l$, the Legendre symbols $(\frac{p_i}{l})$ cannot eventually be $0$. Next, we prove that the sequence $\rho_p$ for all primes does not converge.
\begin{proposition}
The sequence of distributions $\rho_p$ do not converge.
\end{proposition}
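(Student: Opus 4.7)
The plan is to reduce via Corollary \ref{cor: converge equivalence lemma} to showing that $Z_p$ does not converge in distribution as $p$ ranges over all primes, and then to exhibit two subsequences of primes along which $Z_p$ converges to two different limits. Suppose for contradiction that $Z_p \to Z^\ast$ in distribution. Then every subsequence also converges to $Z^\ast$, so any two subsequential limits must agree.

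I will construct two sequences of primes $(p_i)$ and $(q_i)$, each tending to infinity and each satisfying the hypothesis of Corollary \ref{cor: prime sequence converge}, for which the limiting distributions computed by Proposition \ref{prop: Z_p_i converges} differ. For any finite set of constraints of the form $p \equiv 1 \pmod 8$ together with prescribed values of $\left(\frac{p}{l}\right)$ for finitely many odd primes $l$, quadratic reciprocity translates these into a single congruence condition modulo $8 \prod l$, and Dirichlet's theorem supplies infinitely many primes meeting it. Using a diagonal construction, I pick $p_i$ satisfying $p_i \equiv 1 \pmod 8$ and $\left(\frac{p_i}{l}\right) = +1$ for every odd prime $l \le l_i$, where $l_i \to \infty$, and I pick $q_i$ satisfying the same conditions except that $\left(\frac{q_i}{3}\right) = -1$. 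Then for every fixed odd prime $l \ne 3$, $f_l(p_i) = f_l(q_i) = +1$ eventually, while $f_3(p_i) = +1$ and $f_3(q_i) = -1$ eventually, and $f_2(p_i) = f_2(q_i) = d$ eventually. Proposition \ref{prop: Z_p_i converges} then gives
\begin{equation*}
Z_{p_i} \;\longrightarrow\; Y'_{2,d} \prod_{l \text{ odd}} Y'_{l,+1}, \qquad Z_{q_i} \;\longrightarrow\; Y'_{2,d} \cdot Y'_{3,-1} \prod_{\substack{l \text{ odd}\\ l \ne 3}} Y'_{l,+1},
\end{equation*}
as independent products.

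If both limits equal $Z^\ast$, then writing $W$ for the (independent) product of all factors other than the $l = 3$ one, which is common to both expressions, I would have $Y'_{3,+1} \cdot W$ and $Y'_{3,-1} \cdot W$ equidistributed. By Lemma \ref{lem: v_l facts}(b) each factor lies in a bounded interval bounded away from $0$, so $W$ is bounded and strictly positive with $\E[W^k] > 0$ for every $k \ge 1$. Independence and moment matching then force $\E[(Y'_{3,+1})^k] = \E[(Y'_{3,-1})^k]$ for every $k$, and since both random variables are supported on a bounded interval, this would make $Y'_{3,+1}$ and $Y'_{3,-1}$ identically distributed.

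The remaining step, which is the main concrete check, is to verify from the explicit tables in Lemma \ref{lem: Y_l,p explicit distributions} in the Appendix that $Y'_{3,+1}$ and $Y'_{3,-1}$ are in fact \emph{not} equidistributed, for example by computing the second moment or a single atom of each. The conceptual difficulty of the proof is ensuring the infinite product argument is clean, namely that $W$ is bounded with nonvanishing moments and independent of the $l=3$ factor, but these are immediate from the uniform $O(l^{-1})$ bounds in Lemma \ref{lem: v_l facts} together with the product construction of $Z_p$; the concrete calculation at $l = 3$ is the only input that must be checked by hand, and it is routine.
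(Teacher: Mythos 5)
Your overall strategy matches the paper's: reduce via Corollary \ref{cor: converge equivalence lemma} to the non-convergence of $Z_p$, then exhibit two sequences of primes (constructed via Dirichlet) whose limiting distributions of $Z_{p_i}$, computed by Proposition \ref{prop: Z_p_i converges}, differ. The only structural difference is that you distinguish the two sequences at the odd prime $l=3$, whereas the paper distinguishes them at $l=2$ (taking $p_i\equiv 3\pmod 4$ versus $q_i\equiv 5\pmod 8$); both choices are legitimate, and your choice requires the additional, but easy, check that $Y'_{3,+1}\ne Y'_{3,-1}$, which is indeed true (e.g.\ $Y'_{3,+1}$ has an atom at $1$ of mass $4/9$, while $Y'_{3,-1}$ is supported on $\{3/4,3/2\}$).

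However, your justification of the ``cancellation of $W$'' step contains a genuine error. You write that by Lemma \ref{lem: v_l facts}(b) each factor lies in a bounded interval bounded away from $0$, and infer that the infinite independent product $W$ is bounded. That inference fails: the per-factor upper bound is $(1-1/l)^{-1}$ and the per-factor lower bound is $(1+1/l)^{-1}$, and the infinite products of these, $\prod_l(1-1/l)^{-1}$ and $\prod_l(1+1/l)^{-1}$, diverge to $\infty$ and $0$ respectively. So the naive bound on $W$ is vacuous, and in fact the support of $W$ need not be bounded. What you actually need is that $\E[W^k]$ exists, is finite, and is nonzero for every $k$; this can be recovered, but not the way you state it. The correct route is to note that $\E[(Y'_{l,e_l})^k]=1+O_k(l^{-2})$ (using $\E[Y'_{l,e_l}]=1$ from Lemma \ref{lem: Y_l,p expectation variance} and that the deviations are $O(l^{-1})$), so the infinite product $\prod_l\E[(Y'_{l,e_l})^k]$ converges to a positive finite limit; one then has to justify that this limit equals $\E[W^k]$ where $W$ is the weak limit of the partial products, which requires a uniform-integrability argument (for instance, the $L^2$-bounded martingale $W_n=\prod_{l<n}Y'_{l,e_l}$ converges in $L^k$ once $\prod_l\E[(Y'_{l,e_l})^{2k}]<\infty$). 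Once that is in place, the rest of your argument -- equating moments, using boundedness of the $l=3$ factor to deduce equidistribution, and contradicting the explicit tables -- is correct. Note that the paper's own proof at this point simply asserts the two limiting distributions are ``clearly'' different, so it is no more rigorous on this specific issue than your attempt; but the particular reasoning you offer to close the gap does not work as written.
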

\begin{proof}
It suffices to exhibit two different sequences of primes that converge to different distributions. Let $p_1,p_2,\ldots$ be an increasing sequence of primes such that $f_2(p_i)=b$ and for each $l$ we eventually have $f_l(p_i) = 1$ -- this is possible by Dirichlet's theorem. Likewise, let $q_1,q_2,\ldots$ be an increasing sequence of primes such that $f_2(p_i)=c$ and for each $l$ we eventually have $f_l(p_i) = 1$. By Proposition \ref{prop: Z_p_i converges}, $Z_{p_i}$ converges in distribution to $Y_{2,b}'\prod_{l>2}Y_{l,1}'$ while $Z_{q_i}$ converges in distribution to $Y_{2,c}'\prod_{l>2}Y_{l,1}'$. It is clear that these two distributions are different.
\end{proof}
Now, we discuss the converse of Corollary \ref{cor: prime sequence converge}. We conjecture the following.
\begin{conjecture}\label{conj: prod Y not same}
For any two distinct sequences $\{e_l\}_{l \text{ prime}}$ and $\{e'_l\}_{l \text{ prime}}$, the distributions of the infinite independent products $\prod_l Y'_{l,e_l}$ and $\prod_l Y'_{l,e_l'}$ are distinct.
\end{conjecture}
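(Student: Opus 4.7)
The plan is to pass to the Mellin transform (equivalently, the characteristic function of the logarithm) in order to convert the statement into a non-vanishing / linear-independence assertion about an explicit family of functions, and then attempt to establish that assertion either asymptotically or arithmetically. Since the $Y'_{l,e_l}$ are positive and independent, the product's Mellin transform factors as
\[
M(s)\;=\;\E\!\left[\left(\prod_l Y'_{l,e_l}\right)^{\!s}\right]\;=\;\prod_l M_l(s;e_l),\qquad M_l(s;e):=\E\bigl[(Y'_{l,e})^s\bigr],
\]
wherever the product converges. If the two infinite products coincide in distribution, then their Mellin transforms agree on a suitable vertical strip, so $\prod_l M_l(s;e_l)=\prod_l M_l(s;e'_l)$ there.

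First I would compute each factor $M_l(s;e)$ explicitly from Lemma \ref{lem: Y_l,p explicit distributions}: each $Y'_{l,e}$ takes countably many specific rational values indexed by a non-negative integer $\delta$ and a Kronecker-symbol value, each occurring with probability $\Theta(l^{-2\delta})$, so $M_l(s;e)$ is a rational function of $l^{-s}$ whose coefficients depend on $e$. The bounds in Lemma \ref{lem: Y_l,p expectation variance} yield $\log M_l(s;e)=O(l^{-2})$ uniformly on compacta, so the identity above can be rewritten in logarithmic form as
\[
\Phi(s)\;:=\;\sum_{l\,:\,e_l\neq e'_l}\bigl[\log M_l(s;e_l)-\log M_l(s;e'_l)\bigr]\;\equiv\;0,
\]
an identity between meromorphic functions on a half-plane. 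The reduction step is thus to show that $\Phi\equiv 0$ forces every individual summand to vanish, which in particular forces $\{l:e_l\neq e'_l\}=\varnothing$.

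I would attack this in two complementary ways. The asymptotic route: as $\re(s)\to+\infty$, the factor $M_l(s;e)$ is dominated by $(\sup Y'_{l,e})^s$, and the support maxima are specific rationals of the form $(1-l^{-2})^{-1}(1+l^{-1})$ and variants. If the family $\{\log\sup Y'_{l,e}\}_{l,e}$ is $\Q$-linearly independent, the leading exponential of $\Phi(s)$ must be nonzero, contradicting $\Phi\equiv 0$. The arithmetic route: specialize $s$ to positive integers, where each $M_l(s;e)\in\Q$ has denominator a pure power of $l$; clearing denominators and reducing modulo the smallest prime $l_0$ at which $e_{l_0}\neq e'_{l_0}$, one hopes to isolate the $l_0$-contribution and derive a contradiction using the fact that the other primes contribute units modulo $l_0$.

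The main obstacle, and the reason the author states this as a conjecture rather than proving it, is precisely the step of ruling out the "conspiracy" of the infinitely many remaining primes. The asymptotic route requires a $\Q$-linear independence statement for numbers like $\log(l/(l-1))$ across all primes $l$, which appears to demand a Baker-type transcendence input; the support maxima cluster near $1$, so their logarithms are tiny and genuine independence is delicate. The arithmetic route requires controlling the tail product modulo $l_0$, which is hampered by the fact that the tail is an infinite product of rationals whose convergence is only in distribution, not pointwise. I expect that closing the gap will require either a transcendence result specific to the values appearing in Gekeler's formula, or a clever $\ell$-adic / support-structure argument exploiting the explicit rational shape of $Y'_{l,e}$; this is the step I would expect to consume the bulk of the work.
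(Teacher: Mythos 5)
This statement is not a theorem in the paper: it is stated as a Conjecture, the author offers no proof, and the surrounding text explicitly says it ``still seems difficult to prove as we are dealing with infinite products.'' There is therefore no paper proof to compare your sketch against; you were, in effect, asked to attempt something the paper regards as open, and your submission --- which ends by honestly acknowledging the unresolved ``conspiracy'' of the remaining primes --- is correspondingly not a proof either.

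That said, the Mellin-transform reduction is a reasonable first step, but both concrete routes you propose hit obstructions beyond the one you flag. The asymptotic route fails at its opening move: for each fixed odd prime $l$, the supremum of the support is $\sup Y'_{l,e} = (1-\tfrac 1l)^{-1}$ for all three values $e\in\{-1,0,+1\}$ (inspect cases (a)--(c) of Lemma~\ref{lem: Y_l,p explicit distributions}); thus the leading exponentials cancel identically between the two sequences, and the family $\{\log\sup Y'_{l,e}\}$ has repeated elements and is trivially $\Q$-dependent, so the independence hypothesis you invoke is false and in any case irrelevant. Distinguishing $e=+1$ from $e=-1$ would require comparing the constant multiplying $(\sup)^s$, and moreover $\prod_l \sup Y'_{l,e_l} = \prod_l (1-\tfrac 1l)^{-1}$ diverges, so the Mellin transform of the full infinite product has no clean single-exponential asymptotic as $\re(s)\to\infty$ in the first place. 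The arithmetic route also rests on a false premise: in case (c) (and in the $l=2$, $p\equiv 1\pmod 8$ case) the moment $M_l(n;e)$ at a positive integer $n$ is an infinite series whose closed form has denominators involving factors $l^{j}-1$, which are coprime to $l$ and can be divisible by $l_0$ or other primes, so clearing denominators and reducing mod $l_0$ does not isolate the $l_0$-contribution. Your instinct to reduce to a non-vanishing or linear-independence statement is sound, but both concrete paths need repair, and the conjecture remains open.
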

Surprisingly, even though the distributions of $Y'_{l,e_l}$ are given explicitly in Lemma \ref{lem: Y_l,p explicit distributions}, this still seems difficult to prove as we are dealing with infinite products. This conjecture is very likely to be true as it would be a big coincidence for two infinite products of different distributions to be the same. We show that Conjecture \ref{conj: prod Y not same} implies the converse of Corollary \ref{cor: prime sequence converge}.
\begin{proposition}
Suppose that Conjecture \ref{conj: prod Y not same} is true. Then, the converse to Corollary \ref{cor: prime sequence converge} is true, i.e. $\rho_{p_i}$ converges in distribution only if for each odd prime $l$ the Legendre symbols $(\frac {p_i} l)$ is eventually constant, and additionally $p_i$ is eventually $3 \pmod 4$, $1 \pmod 8$, or $5 \pmod 8$.
\end{proposition}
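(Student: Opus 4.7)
The plan is to prove the contrapositive. Suppose that $\{p_i\}$ does \emph{not} satisfy the stated condition. This means there is some prime $l$ (possibly $l = 2$) for which $f_l(p_i)$ is not eventually constant; indeed, for odd $l$ this is the failure of the Legendre symbol condition, while failure of the mod $8$ condition is exactly the failure of $f_2(p_i)$ to stabilize (recall $p_i \ne 2$ for $i$ large, so $f_2$ only takes the three values $b,c,d$). The goal is to produce two subsequences along which $Z_{p_i}$ converges to \emph{different} limits, which, combined with Corollary~\ref{cor: converge equivalence lemma}, shows $\rho_{p_i}$ does not converge.

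Since $f_l$ takes only finitely many values, there exist two distinct values $e \neq e'$ each attained infinitely often. First I would extract subsequences $S^{(1)}, S^{(2)} \subseteq \{p_i\}$ on which $f_l$ is identically $e$ and $e'$ respectively. Next, I apply a standard diagonal argument: enumerate the primes as $l_1, l_2, \ldots$, and successively refine $S^{(1)}$ to a subsequence on which $f_{l_1}$ is constant, then a further subsequence on which $f_{l_2}$ is constant, and so on; taking the diagonal yields a subsequence $T^{(1)} \subseteq S^{(1)}$ along which $f_{l'}(p_i)$ is eventually constant for every prime $l'$ (call this limiting value $e_{l'}$, with $e_l = e$). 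Repeat the same procedure inside $S^{(2)}$ to obtain $T^{(2)}$ with eventual constants $e'_{l'}$ satisfying $e'_l = e'$.

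Now apply Proposition~\ref{prop: Z_p_i converges} on each of $T^{(1)}$ and $T^{(2)}$: the former gives
\[
Z_{p_i} \xrightarrow{i\in T^{(1)}} \prod_{l' \text{ prime}} Y'_{l', e_{l'}},
\qquad
Z_{p_i} \xrightarrow{i\in T^{(2)}} \prod_{l' \text{ prime}} Y'_{l', e'_{l'}},
\]
both as infinite independent products. The two coefficient sequences $\{e_{l'}\}$ and $\{e'_{l'}\}$ disagree at the prime $l$, so they are distinct as sequences. Invoking Conjecture~\ref{conj: prod Y not same} then yields that the two limiting distributions are distinct. Hence $Z_{p_i}$ has two different subsequential limits and so cannot converge in distribution. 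By Corollary~\ref{cor: converge equivalence lemma} this forces $\rho_{p_i}$ to fail to converge, completing the contrapositive.

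I do not anticipate a genuine obstacle under the hypothesis of the conjecture; the only technical point to watch is the diagonalization, where one must verify that the resulting $T^{(1)}, T^{(2)}$ are infinite and that $f_{l'}$ does indeed stabilize along each $T^{(j)}$ for \emph{every} prime $l'$ (not merely for finitely many). This is standard once one fixes a concrete enumeration of primes, since at each stage the finite-valuedness of $f_{l_j}$ guarantees that some value is attained infinitely often in the current subsequence. Apart from this bookkeeping, all the analytic content is packaged into Proposition~\ref{prop: Z_p_i converges} and Conjecture~\ref{conj: prod Y not same}.
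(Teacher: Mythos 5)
Your proof is correct and follows the same overall strategy as the paper: find a prime $l$ where $f_l(p_i)$ fails to stabilize, extract two subsequences realizing two distinct eventual values at $l$, and use Conjecture~\ref{conj: prod Y not same} together with Corollary~\ref{cor: converge equivalence lemma} to conclude $Z_{p_i}$ (hence $\rho_{p_i}$) cannot converge. The one place you diverge technically is in how you make the limiting sequences $\{e_{l'}\}$, $\{e'_{l'}\}$ actionable. You run a full diagonal argument to produce single subsequences $T^{(1)}, T^{(2)}$ along which \emph{every} $f_{l'}$ is eventually constant, which lets you cite Proposition~\ref{prop: Z_p_i converges} verbatim to get the two subsequential limits, then invoke the conjecture. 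The paper instead stops short of the diagonal: it only extracts, for each finite $s$, a subsequence $q_i^s$ on which $f_{l'}$ is constant for all $l' < s$, and then reruns the variance/expectation tail estimate from the proof of Proposition~\ref{prop: Z_p_i converges} to get each $\mu_{\log Z_{q_i^s}}$ within $\epsilon$ of the candidate limit for $s$ large, concluding $\log Z_{p_i}$ is not Cauchy. Your route is slightly cleaner in that it reuses Proposition~\ref{prop: Z_p_i converges} as a black box rather than partially reproving it; the paper's route avoids constructing an actual diagonal subsequence, which is a matter of taste. Both are valid, and the bookkeeping concern you flag (that the diagonal $T^{(j)}$ is infinite and stabilizes every $f_{l'}$) does check out by the standard nested-subsequence argument.
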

\begin{proof}
Suppose to the contrary that there exists some prime $m$ where $f_{m}(p_i)$ is not eventually constant. Then there exist two different values of $f_{m}(p_i)$ that appear infinitely often, say $e_{m}\neq e_{m}'$, so there exist disjoint infinite subsequences $q_i$ and $r_i$ of $p_i$ where $f_{m}(q_i)=e_{m}$ and $f_{m}(r_i)=e_{m}'$ for all $i$. 

By the pigeonhole principle, there exists a further subsequence $q_i'$ of $q_i$, where $f_{2}(q_i') = e_2$ for some $e_2$, a further subsequence of $q_i''$ of $q_i'$ where $f_{3}(q_i'')=e_3$ for some $e_3$, and so on. Iterating this argument, we can obtain an infinite sequence $e_l$ indexed by primes $l$ satisfying the following condition: for any $s$, there exists an infinite subsequence $q_i^{s}$ of $q_i$ where $f_l(q_i)=e_l$ for all $l<s$. We repeat the same for $r_i$ to obtain a sequence $e_l'$ with the same property and define $r_i^s$ in the same way.

Since $e_{m}\neq e_{m}'$, the distributions of $\sum_l \log(Y_{l,e_l}')$ and $\sum_l \log(Y_{l,e_l'}')$ differ by Conjecture \ref{conj: prod Y not same}. Let the Lévy-Prokhorov distance between these two distributions be $3 \epsilon>0$. Repeating the argument in the proof of Proposition \ref{prop: Z_p_i converges}, pick $s$ to be sufficiently large such that the following bounds hold for all sufficiently large $i$:
$$\pi\left(\mu_{(\sum_{l} \log(Y_{l,q_i^{s}}))}, \mu_{(\sum_l \log(Y_{l,e_l}'))} \right)< \epsilon,$$
$$\pi\left(\mu_{(\sum_{l} \log(Y_{l,r_i^{s}}))}, \mu_{(\sum_l \log(Y_{l,e_l'}'))} \right)< \epsilon.$$
By the triangle inequality, 
$$\pi(\mu_{\log Z_{q_i^{s}}}, \mu_{\log Z_{r_i^{s}}}) = \pi\left(\mu_{(\sum_{l} \log(Y_{l,q_i^{s}}))}, \mu_{(\sum_{l} \log(Y_{l,r_i^{s}}))} \right) > 3\epsilon -\epsilon -\epsilon = \epsilon,$$
so $\log Z_{p_i}$ is not Cauchy in the metric $\pi$ and hence does not converge in distribution. Thus, $Z_{p_i}$ does not converge in distribution, and by Corollary \ref{cor: converge equivalence lemma}, $\rho_{p_i}$ does not converge in distribution.
\end{proof}

\subsection{Distribution of size of isogeny classes}\label{sec: corollaries}
Recall that the isogeny classes of elliptic curves over finite fields correspond to their Frobenius trace. Thus, the sizes of isogeny classes over $\F_p$ satisfy $N_{t,p}' \approx N_{t,p}$ for $t\in [-2\sqrt p, 2\sqrt p]$. This appears as the second component in $\rho_p$, so projecting the results of Theorem \ref{thm: 2D vertical sato-tate} to the second component directly gives us corresponding results on the distribution of size of isogeny classes. 
\begin{corollary}\label{cor: isogeny classes}
Let $s_p$ be the distribution of the size of isogeny classes of elliptic curves over $\F_p$. Also, let $X$ be the uniform random variable on $[-2,2]$ and suppose that $X$ and $Z_p$ are independent random variables. Then, $$\pi\left(\frac{s_p}{\sqrt p},\frac{\sqrt{4-X^2}}{\pi}Z_p\right)\rightarrow 0$$ 
as $p\rightarrow \infty$, where $\pi$ is the Lévy-Prokhorov metric. 

Furthermore, $s_p/\sqrt p$ do not converge in distribution as $p\rightarrow \infty$. For an increasing sequence of primes $p_i$, $s_{p_i}/\sqrt{p_i}$ converges in distribution if for each odd prime $l$ the Legendre symbols $(\frac{p_i}{l})$ is eventually constant, and $p_i$ is either eventually $3\pmod 4$, eventually $1\pmod 8$ or eventually $5\pmod 8$.
\end{corollary}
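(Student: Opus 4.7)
The plan is to deduce Corollary \ref{cor: isogeny classes} by projecting Theorem \ref{thm: 2D vertical sato-tate} and Corollary \ref{cor: prime sequence converge} onto their second coordinate. Isogeny classes of elliptic curves over $\F_p$ are parametrized by the integer Frobenius traces $t \in [-2\sqrt p, 2\sqrt p]$, with the class of trace $t$ having size $N_{t,p}'$. By Deuring's theorem every such integer trace is realized with only $O(1)$ exceptions, so sampling a uniformly random isogeny class is, up to a Lévy–Prokhorov perturbation of size $o(1)$, the same as sampling $N_{t,p}'$ for $t$ uniform in $\Z \cap [-2\sqrt p, 2\sqrt p]$.

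Next, I would observe that the projection $p_2 \colon \R^2 \to \R$ onto the second coordinate is $1$-Lipschitz and hence contracts the Lévy–Prokhorov metric: for any $A \in \mathcal B(\R)$ one has $(p_2^{-1}(A))^\epsilon \subseteq p_2^{-1}(A^\epsilon)$, giving $\pi((p_2)_*\mu, (p_2)_*\nu) \le \pi(\mu, \nu)$. Applying $p_2$ to Theorem \ref{thm: 2D vertical sato-tate} and then rescaling by $2$ (which at most doubles the Lévy–Prokhorov distance) shows that the law of $N_{t,p}'/\sqrt p$ and the law of $\tfrac{\sqrt{4-X^2}}{\pi}Z_p$ approach each other in Lévy–Prokhorov metric. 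Combined with the previous paragraph, this yields the main convergence claim $\pi\bigl(s_p/\sqrt p,\ \tfrac{\sqrt{4-X^2}}{\pi}Z_p\bigr) \to 0$.

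For the sufficient condition for subsequence convergence, I would invoke Proposition \ref{prop: Z_p_i converges}: under its hypotheses the distributions of $Z_{p_i}$ converge. Using independence of $X$ and $Z_{p_i}$ together with the continuous mapping theorem applied to $(x,z)\mapsto \sqrt{4-x^2}\,z/\pi$, the law of $\tfrac{\sqrt{4-X^2}}{\pi}Z_{p_i}$ converges in distribution as well; the main convergence statement then transfers this convergence to $s_{p_i}/\sqrt{p_i}$.

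Non-convergence of the full sequence is the most delicate part, since multiplying $Z_p$ by the independent factor $\sqrt{4-X^2}/\pi$ could in principle wash out a difference between limiting $Z$-distributions. Following the argument used to show $\rho_p$ does not converge, I would use Dirichlet's theorem to construct two subsequences: $p_i \equiv 3 \pmod 4$ and $q_i \equiv 1 \pmod 8$, both with $(\tfrac{p_i}{l}) = (\tfrac{q_i}{l}) = 1$ for every odd $l$. Proposition \ref{prop: Z_p_i converges} then gives $Z_{p_i} \to Y'_{2,b}\prod_{l>2}Y'_{l,1} =: A$ and $Z_{q_i} \to Y'_{2,d}\prod_{l>2}Y'_{l,1} =: B$. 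To verify that the limits $\tfrac{\sqrt{4-X^2}}{\pi}A$ and $\tfrac{\sqrt{4-X^2}}{\pi}B$ are genuinely distinct, I would compare second moments: by independence of $X$ and the infinite-product factors, together with $\E[(4-X^2)/\pi^2] = 8/(3\pi^2)$, the comparison reduces to showing $\E[(Y'_{2,b})^2] \ne \E[(Y'_{2,d})^2]$, a routine check from the explicit distributions in Lemma \ref{lem: Y_l,p explicit distributions}. This final second-moment verification — i.e., ruling out an accidental collapse after convolving with the semicircular weight — is the main obstacle; the rest of the proof is essentially a formal projection from the two-dimensional theorem.
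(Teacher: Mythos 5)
Your proposal is correct in outline, and it actually goes beyond what the paper supplies: the paper states Corollary~\ref{cor: isogeny classes} with only the one-line justification that one should ``project the results of Theorem~\ref{thm: 2D vertical sato-tate} to the second component.'' For the main convergence statement and the sufficient condition for subsequence convergence, this really is all that is needed, and your handling of those parts is right. Projection onto the second coordinate is $1$-Lipschitz and hence nonexpansive for the L\'evy--Prokhorov metric, dilation by $2$ at most doubles it (this follows from $(B/c)^\epsilon = (B^{c\epsilon})/c$ applied to Definition~\ref{def: levy-prokhorov}), and replacing ``uniform over realized isogeny classes'' by ``uniform over integer traces in $[-2\sqrt p,2\sqrt p]$'' costs only $O(1)$ out of $\approx 4\sqrt p$ sample points (for $p$ prime every such trace is realized), which is an $o(1)$ perturbation in total variation and hence in $\pi$.

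The place where you genuinely add something is the non-convergence claim. You are correct that this does \emph{not} follow formally by projection: non-convergence of a sequence of joint laws does not imply non-convergence of a marginal, and the marginal here is a product $\frac{\sqrt{4-X^2}}{\pi}Z_p$ in which the non-convergent part $Z_p$ is ``smeared'' by multiplication with a fixed independent random factor. The paper implicitly elides this point, and your second-moment argument is a clean way to close the gap: take the two subsequences as in the paper's proof that $\rho_p$ diverges (say $p_i\equiv 3\pmod 4$ and $q_i\equiv 1\pmod 8$, both eventually with $(\tfrac{\cdot}{l})=1$), identify the limits as $\tfrac{\sqrt{4-X^2}}{\pi}A$ and $\tfrac{\sqrt{4-X^2}}{\pi}B$ with $A=Y'_{2,b}\prod_{l>2}Y'_{l,1}$, $B=Y'_{2,d}\prod_{l>2}Y'_{l,1}$, and then use independence to factor the second moment. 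From Lemma~\ref{lem: Y_l,p explicit distributions} one gets $\E[(Y'_{2,b})^2]=\tfrac{43}{36}\approx 1.194$, while $\E[(Y'_{2,d})^2]\approx 1.175$, so the two limits indeed have different second moments.

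Two minor technical points you should flag when writing this up. First, to pass from $\E[(Y'_{2,b})^2]\ne\E[(Y'_{2,d})^2]$ to $\E[A^2]\ne\E[B^2]$ you need $\E[\bigl(\prod_{l>2}Y'_{l,1}\bigr)^2]$ to be a finite nonzero product of second moments; this holds because the finite partial products have uniformly bounded fourth moments (using $|Y'_{l,1}-1|=O(1/l)$ from Lemma~\ref{lem: v_l facts}(b), so $\E[(Y'_{l,1})^4]=1+O(l^{-2})$), giving the uniform integrability needed to exchange limit and expectation. Second, you also need that along each subsequence the second moments of $\tfrac{\sqrt{4-X^2}}{\pi}Z_{p_i}$ converge to those of the limit; the same uniform fourth-moment bound gives the requisite uniform integrability. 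With these two observations supplied, your argument is complete, and it repairs a step the paper leaves to the reader.
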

\section{Stronger vertical Sato-Tate}\label{sec: pf of stronger sato-tate}
The goal of this section is to prove Theorem \ref{thm: stronger vertical sato-tate}. Throughout this section, we fix a function $f(p)$, then we choose a prime $p$ and an interval $I=[t_0,t_0+f(p))\subseteq [-2\sqrt p, 2\sqrt p]$. Pick a random trace $t\in I$, and we carry out the method described in Section \ref{sec: proof overview strong} in reverse order. For the statement assuming GRH we will use $l_0 = \log(p)^{2+\epsilon}$, and for the unconditional statement we will use $l_0 = g_\kappa(p)$ for a suitable choice of $\kappa$ as in Proposition \ref{prop: second step}.

\begin{lemma}\label{lem: expanded product}
Suppose we are given distinct primes $l_1,l_2,\ldots ,l_m<l_0$, and positive integers $\alpha_i$ where $\sum_{i=1}^m \alpha_i = n$. Then there exists some universal constant $C$ where
\begin{equation}\label{eqn: expanded product}
\E\left[\prod_{i=1}^m \log(v_{l_i,k}(\Delta))^{\alpha_i}\right] = \E\left[\prod_{i=1}^m \log(Y_{l_i,p,k})^{\alpha_i}\right] + O\left(\frac{C^n l_0^{2kn}}{f(p)}\right)
\end{equation}
where we take the infinite independent product on the right.
\end{lemma}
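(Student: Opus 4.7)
The plan is to use periodicity together with the Chinese Remainder Theorem to reduce the left-hand expectation in \eqref{eqn: expanded product} to the independent-product expectation on the right, incurring an error only from the fact that the interval length $f(p)$ need not be a multiple of the joint period. Set $g(t) := \prod_{i=1}^{m} \log(v_{l_i,k}(t^2 - 4p))^{\alpha_i}$, so that the left-hand side equals $\frac{1}{f(p)}\sum_{t\in I} g(t)$. By the first part of Lemma \ref{lem: basic v_l,k facts}, $g$ is periodic in $t$ with period dividing $T := \prod_{i=1}^{m} l_i^{2k}$; since $\alpha_i\ge 1$ forces $m\le n$ and each $l_i<l_0$, this gives $T\le l_0^{2km}\le l_0^{2kn}$.

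The first step would be to identify the period average $\bar g := \frac{1}{T}\sum_{t=0}^{T-1} g(t)$ with the right-hand side of \eqref{eqn: expanded product}. By the Chinese Remainder Theorem, the map $\Z/T\Z \to \prod_i \Z/l_i^{2k}\Z$ is a bijection, so sampling $t$ uniformly modulo $T$ induces jointly independent uniform samples on each $\Z/l_i^{2k}\Z$. Since $v_{l_i,k}(t^2-4p)$ depends only on $t \bmod l_i^{2k}$, combining this with the definition of $Y_{l_i,p,k}$ gives
$$\bar g = \prod_{i=1}^{m} \E\left[\log(Y_{l_i,p,k})^{\alpha_i}\right] = \E\left[\prod_{i=1}^{m}\log(Y_{l_i,p,k})^{\alpha_i}\right],$$
where the final equality uses independence of the $Y_{l_i,p,k}$ as random variables on the product space.

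The second step would be to bound the defect $|\E_{t\in I}[g(t)] - \bar g|$. Writing $f(p) = qT + r$ with $0 \le r < T$, the sum $\sum_{t\in I} g(t)$ decomposes as $qT\bar g$ from the $q$ complete periods plus a remainder sum of at most $r$ terms, each of absolute value at most $\max_t |g(t)|$. Hence
$$\left|\,\E_{t\in I}[g(t)] - \bar g\,\right| \le \frac{r\,|\bar g| + r\,\max|g|}{f(p)} \le \frac{2T\,\max|g|}{f(p)}.$$
To bound $\max|g|$, I would use Lemma \ref{lem: v_l facts}(b) together with the explicit value $(1-1/l)^{-1}$ of $v_{l,k}$ in the exceptional case $l^{2k}\mid\Delta$ to extract a universal constant $C_1$ with $|\log(v_{l,k}(\Delta))|\le C_1/l$ for every $l,\Delta$. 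Then $|g(t)| \le \prod_i (C_1/l_i)^{\alpha_i}\le C_1^n$, and combining with $T\le l_0^{2kn}$ yields the stated error $O(C^n l_0^{2kn}/f(p))$ with $C=2C_1$. The argument is essentially a direct averaging computation and there is no serious obstacle; the only bookkeeping point worth flagging is that $m$ can range up to $n$, which is why the exponent in the final bound is $2kn$ rather than $2km$, a form that is convenient for the application in Theorem \ref{thm: stronger vertical sato-tate}.
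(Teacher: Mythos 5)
Your proof is correct and follows essentially the same approach as the paper: exploit periodicity of $\prod_i \log(v_{l_i,k}(\Delta))^{\alpha_i}$ with joint period $T=\prod_i l_i^{2k}\le l_0^{2kn}$, use CRT to identify the average over a full period with the independent-product expectation, and bound the error by the leftover piece of $I$ not filling a full period. The only (inessential) cosmetic difference is that you record the sharper pointwise bound $|\log(v_{l,k}(\Delta))|\le C_1/l$ rather than just $O(1)$; both give the required $C^n$.
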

\begin{proof}
The proof is similar to the proof of Lemma \ref{lem: individual var and covar}. Since each $\log(v_{l_i,k}(\Delta))^{\alpha_i}$ is periodic in $t$ with period dividing $l_i^{2k}$, the terms $\log(v_{l_i,k}(\Delta))^{\alpha_i}$ are independent in an interval of length $T=\prod_{i=1}^m l_i^{2k}<l_0^{2kn}$. Then, we split the interval $I$ into intervals of length $T$ with a leftover interval of length $<T$. On the intervals of exactly length $T$ the two expectations agree, so the error comes from the leftover interval. The probability that $t$ is in the leftover interval is less than $T/f(p)$. By definition of $v_{l,k}$ the factor $\log(v_{l,k}(\Delta))$ is bounded, so let $C$ be a universal constant such that $\log(v_{l,k}(\Delta))\le C$. Thus, we have both $\prod_{i=1}^m \log(v_{l_i,k}(\Delta))^{\alpha_i} \le C^n$ and $\prod_{i=1}^m \log(Y_{l_i,p,k})^{\alpha_i} \le C^n$. Taking the product of $T/f(p)$ and $2C^n$, we obtain the error term.
\end{proof}
\begin{lemma}\label{lem: power n bound}
There exists some universal constant $C$ where
\begin{equation}\label{eqn: power n bound}
\E\left[\left(\sum_{l<l_0} \log(v_{l,k}(\Delta))\right)^n\right] = \E\left[\left(\sum_{l<l_0} \log(Y_{l,p,k})\right)^n\right] + O\left(\frac{C^nl_0^{(2k+1)n}}{f(p)}\right)
\end{equation}
where we take the independent sum on the right. 
\end{lemma}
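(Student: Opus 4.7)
The plan is to reduce the lemma to Lemma \ref{lem: expanded product} via the multinomial expansion. Writing $X_l = \log(v_{l,k}(\Delta))$ and $\tilde X_l = \log(Y_{l,p,k})$, I expand
\begin{equation*}
\Bigl(\sum_{l<l_0} X_l\Bigr)^n \;=\; \sum_{(l_1,\dots,l_n)} X_{l_1} X_{l_2} \cdots X_{l_n},
\end{equation*}
where the outer sum runs over all ordered $n$-tuples of primes $l_i < l_0$ (with repetition). Doing the same for $\tilde X_l$ and taking expectations term by term, the problem reduces to controlling, for each tuple, the difference $|\E[\prod_i X_{l_i}] - \E[\prod_i \tilde X_{l_i}]|$.

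For a fixed tuple, let $l_1',\dots,l_m'$ be the distinct primes appearing and let $\alpha_i$ be their multiplicities, so $\sum_i \alpha_i = n$ and $\prod_i X_{l_i} = \prod_{i=1}^{m} \log(v_{l_i',k}(\Delta))^{\alpha_i}$. Lemma \ref{lem: expanded product} applies directly, giving
\begin{equation*}
\left| \E\Bigl[\prod_{i=1}^m \log(v_{l_i',k}(\Delta))^{\alpha_i}\Bigr] - \E\Bigl[\prod_{i=1}^m \log(Y_{l_i',p,k})^{\alpha_i}\Bigr] \right| = O\!\left(\frac{C^n l_0^{2kn}}{f(p)}\right)
\end{equation*}
for a universal constant $C$. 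Note the error bound depends on the tuple only through $n$ and the crude bound $l_0$ on the primes involved, not on the specific primes, so it is uniform across all tuples.

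Finally, I sum the error over all ordered tuples. There are $\pi(l_0)^n \le l_0^n$ such tuples, so the total contribution to the difference of expectations is at most $l_0^n \cdot O(C^n l_0^{2kn}/f(p)) = O(C^n l_0^{(2k+1)n}/f(p))$, which matches the claimed bound (absorbing the constant into $C$ if necessary). No step here is difficult — the main point is simply bookkeeping the multinomial expansion and noting that both the number of tuples ($\le l_0^n$) and the per-tuple error ($C^n l_0^{2kn}/f(p)$) multiply to give precisely the exponent $(2k+1)n$ appearing in \eqref{eqn: power n bound}.
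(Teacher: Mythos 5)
Your proof is correct and follows essentially the same route as the paper: expand the $n$-th power of the sum, apply Lemma~\ref{lem: expanded product} termwise, and multiply the per-term error $O(C^n l_0^{2kn}/f(p))$ by the number of terms $\pi(l_0)^n \le l_0^n$. If anything, your phrasing of the expansion as a sum over ordered $n$-tuples (so that the multinomial coefficients are automatically accounted for by repetition) is a bit cleaner than the paper's, but the counting and the resulting bound are identical.
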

\begin{proof}
We obtain this equation by summing up Equation \eqref{eqn: expanded product} over all possible integers $m$, distinct primes $l_1,\cdots l_m < l_0$ and integers $\alpha_1,\ldots, \alpha_m$ such that $\sum_{i=1}^m \alpha_i = n$. There are $\#\{\text{primes } l < l_0\}^n < l_0^n$ such combinations, so the error term is $l_0^n$ multiplied by the error in Equation \eqref{eqn: expanded product}.
\end{proof}
However, this bound is not very good when $n$ is large. This is because the periodicity argument in the proof of Lemma \ref{lem: expanded product} is ineffective when the period $T$ is greater than $f(p)$. Hence, for large $n$ we will use the following bound which is independent of $f(p)$ instead.
\begin{lemma}\label{lem: different power n bound}
There exists some universal constant $C$ where
\begin{equation}\label{eqn: different power n bound}
\E\left[\left(\sum_{l<l_0} \log(v_{l,k}(\Delta))\right)^n\right] = \E\left[\left(\sum_{l<l_0} \log(Y_{l,p,k})\right)^n\right] + O(C^n \log(l_0)^n)
\end{equation}
where we take the independent sum on the right. 
\end{lemma}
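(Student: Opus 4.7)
The plan is to prove this by a simple uniform pointwise bound, avoiding the periodicity argument from Lemma \ref{lem: expanded product} entirely. Since the right-hand side error is independent of $f(p)$, we should not try to match up values trace-by-trace; instead, we bound both $n$-th moments separately by the same quantity and conclude via the triangle inequality.

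First I would observe that each summand admits a uniform pointwise bound. By Lemma \ref{lem: v_l facts}(b) we have $\log(v_l(\Delta)) = O(l^{-1})$, and by Definition \ref{defn: v_l,k definition} the value $v_{l,k}(\Delta)$ is either $v_l(\Delta)$ or the explicit constant $(1-l^{-2})^{-1}(1+l^{-1})$, whose logarithm is also $O(l^{-1})$. Hence there is a universal constant $C_1$ with $|\log(v_{l,k}(\Delta))| \le C_1/l$ for every $\Delta$. The identical estimate $|\log(Y_{l,p,k})| \le C_1/l$ holds, since $Y_{l,p,k}$ takes values in the same range.

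Next I would invoke Mertens' theorem, $\sum_{l<l_0,\, l\text{ prime}} 1/l = \log\log l_0 + O(1)$, to get
\begin{equation*}
\Bigl|\sum_{l<l_0} \log(v_{l,k}(\Delta))\Bigr| \;\le\; \sum_{l<l_0} \frac{C_1}{l} \;=\; O(\log\log l_0),
\end{equation*}
pointwise in $\Delta$, and likewise for the independent sum $\sum_{l<l_0} \log(Y_{l,p,k})$. Raising to the $n$-th power yields the deterministic bounds
\begin{equation*}
\Bigl|\sum_{l<l_0} \log(v_{l,k}(\Delta))\Bigr|^n \le (C_2 \log\log l_0)^n, \quad \Bigl|\sum_{l<l_0} \log(Y_{l,p,k})\Bigr|^n \le (C_2 \log\log l_0)^n
\end{equation*}
for a universal constant $C_2$. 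Taking expectations and applying the triangle inequality gives a difference of at most $2(C_2 \log\log l_0)^n$. Since $(\log\log l_0)^n \le (\log l_0)^n$ for all $l_0 \ge 2$, absorbing the factor of $2$ into $C$ yields the claimed bound $O(C^n \log(l_0)^n)$.

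There is no real obstacle here; the lemma is essentially a bookkeeping statement. The only subtlety is recognizing that in contrast to Lemma \ref{lem: power n bound}, where we exploited periodicity to match expectations up to a small error and consequently picked up a factor $l_0^{(2k+1)n}/f(p)$ that blows up for large $n$, the present bound uses only the trivial triangle inequality on absolute values. The two bounds are thus complementary: Lemma \ref{lem: power n bound} is sharp for small $n$ and Lemma \ref{lem: different power n bound} dominates once $l_0^{(2k+1)n}/f(p)$ exceeds $\log(l_0)^n$, which is exactly the crossover needed to control the Taylor tail in Equation \eqref{eqn: taylor expansion}.
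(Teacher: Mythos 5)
Your proof is correct and follows essentially the same route as the paper's: bound each $\log(v_{l,k}(\Delta))$ and $\log(Y_{l,p,k})$ pointwise by $O(l^{-1})$, sum over $l<l_0$, raise to the $n$-th power, and conclude by the triangle inequality on expectations. The only (cosmetic) difference is that you invoke Mertens' theorem for the sharper intermediate estimate $O(\log\log l_0)$ before weakening to $O(\log l_0)$, while the paper bounds $\sum_{l<l_0} l^{-1} = O(\log l_0)$ directly.
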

\begin{proof}
Since $\log(v_{l,k})(\Delta) = O(l^{-1})$ and $\sum_{l<l_0} l^{-1} = O(\log(l_0))$, we see that 
$\sum_{l<l_0} \log(v_{l,k}(\Delta)) = O(\log(l_0))$. Thus, there exists some constant $C$ where $\left(\sum_{l<l_0} \log(v_{l,k}(\Delta))\right)^n = O(C^n\log(l_0)^n)$, and likewise we have $\left(\sum_{l<l_0} \log(Y_{l,p,k})\right)^n = O(C^n\log(l_0)^n)$, giving us the desired bound.
\end{proof}
Combining both bounds gives us the following proposition.
\begin{proposition}\label{prop: prod v_l,k bound}
There exists some universal constant $C'$ where if $f(p)\ge \exp(C'k(\log(l_0)^2))$ for all sufficiently large $p$, then
\begin{equation}\label{eqn: prod v_l,k bound}
\E\left[\prod_{l<l_0}v_{l,k}(\Delta)\right] = \E\left[\prod_{l<l_0}Y_{l,p,k}\right] + o(1)
\end{equation}
where we take the independent product on the right.
\end{proposition}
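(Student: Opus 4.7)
The plan is to apply the Taylor expansion \eqref{eqn: taylor expansion}, then compare the two resulting series term by term using Lemmas \ref{lem: power n bound} and \ref{lem: different power n bound}, splitting the index $n$ at a carefully chosen threshold. Set $A \coloneqq \sum_{l<l_0}\log(v_{l,k}(\Delta))$ and let $B$ denote the analogous sum with an independent $\log(Y_{l,p,k})$ in place of each $\log(v_{l,k}(\Delta))$, so that $\prod_{l<l_0} v_{l,k}(\Delta) = e^A$ and $\prod_{l<l_0}Y_{l,p,k} = e^B$. Since each logarithm factor is $O(l^{-1})$, both $|A|$ and $|B|$ are almost surely bounded by $D\log(l_0)$ for some universal $D$; in particular both Taylor series converge absolutely and
$$\E[e^A]-\E[e^B] \;=\; \sum_{n=0}^{\infty}\frac{1}{n!}\bigl(\E[A^n]-\E[B^n]\bigr).$$

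I will split this sum at a threshold $N = \lceil C''\log(l_0)\rceil$, where $C''$ is a sufficiently large constant to be chosen. For the \emph{tail} $n \ge N$, I use the trivial bounds $|\E[A^n]|,|\E[B^n]| = O((D\log l_0)^n)$ underlying Lemma \ref{lem: different power n bound}. By Stirling, once $n$ exceeds a constant multiple of $\log(l_0)$ the ratio $(D\log l_0)^n/n!$ decays faster than $2^{-n}$, so choosing $C''$ large enough yields $\sum_{n\ge N}\frac{1}{n!}(|\E[A^n]|+|\E[B^n]|) = o(1)$ as $l_0\to\infty$. For the \emph{head} $n < N$, I use Lemma \ref{lem: power n bound}, which bounds each term of the difference by $O\bigl(C^n l_0^{(2k+1)n}/(n!\,f(p))\bigr)$. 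Applying $x^n/n! \le (ex/n)^n$ with $x = C l_0^{2k+1}$ and $n \le N-1$, the maximum of these terms is at most $(eCl_0^{2k+1})^{N-1} \le \exp\bigl(C''' k (\log l_0)^2\bigr)$ for a constant $C'''$ depending only on $C$ and $C''$; hence the entire head contribution is $O\bigl(\log(l_0)\exp(C''' k(\log l_0)^2)/f(p)\bigr)$.

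Choosing $C' > C'''$, the hypothesis $f(p) \ge \exp(C'k(\log l_0)^2)$ forces the head contribution to also be $o(1)$, and combining with the tail estimate gives Equation \eqref{eqn: prod v_l,k bound}. The main obstacle is the tension that dictates the exact form of the hypothesis: Lemma \ref{lem: power n bound}'s error carries the factor $l_0^{(2k+1)n}$, which is super-exponential in $n$ and hence unusable for large $n$; the saving grace is that the $1/n!$ in the Taylor expansion tames the much milder $(\log l_0)^n$ growth of the crude bound once $n$ surpasses a constant multiple of $\log(l_0)$. The optimal cutoff therefore sits at $N\asymp \log(l_0)$, and plugging this into the head bound is precisely what forces the $\exp\bigl(O(k(\log l_0)^2)\bigr)$ lower bound on $f(p)$.
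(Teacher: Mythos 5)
Your proposal is correct and follows essentially the same strategy as the paper's proof: expand $\E[e^A]-\E[e^B]$ via the Taylor series \eqref{eqn: taylor expansion}, split at a threshold $n\asymp\log(l_0)$, control the head with Lemma \ref{lem: power n bound} and the tail with the crude bound $|A|,|B|=O(\log l_0)$ underlying Lemma \ref{lem: different power n bound}, and let Stirling and the hypothesis on $f(p)$ kill both contributions. The only cosmetic difference is that you bound $|\E[A^n]|$ and $|\E[B^n]|$ separately in the tail rather than invoking Lemma \ref{lem: different power n bound} directly, and you retain the $1/n!$ in the head estimate via $x^n/n!\le(ex/n)^n$ where the paper simply drops it; neither change affects the outcome.
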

\begin{proof}
By Taylor expansion (see Equation \eqref{eqn: taylor expansion}), this equation is obtained by summing up Equation \eqref{eqn: power n bound} or Equation \eqref{eqn: different power n bound} for all integers $n\ge 0$, weighted by $\frac{1}{n!}$. Choose a sufficiently large constant $C$ such that both Equation \eqref{eqn: power n bound} and \eqref{eqn: different power n bound} hold with this $C$. We consider the following two ranges for $n$.
\begin{enumerate}[(\alph*)]
    \item For $n\ge 3C\log(l_0)$, we will use Equation \eqref{eqn: different power n bound}, so the error term coming from this range of $n$ is
    $O\left(\sum_{n=3C\log(l_0)}^\infty \frac{C^n\log(l_0)^n}{n!} \right).$
    However, note that in the sum, each successive term is less than half of the previous term, so the first term dominates and the total error is $O(\frac{(C\log(l_0))^{3C\log(l_0)}}{(3C\log(l_0))!})=o(1)$ as $p\rightarrow \infty$ by Stirling's approximation.
    \item For $n<3C\log(l_0)$, we will use Equation \eqref{eqn: power n bound} instead, so the error term coming from this range of $n$ is
    $$O\left(\frac{\sum_{n=0}^{3C\log(l_0)-1} C^n l_0^{(2k+1)n}}{f(p)}\right) = O\left(3C\log(l_0)\frac{(Cl_0^{2k+1})^{3C\log(l_0)}}{f(p)}\right) = O\left(\frac{e^{C'k(\log(l_0))^2}}{f(p)}\right) = o(1)$$
    for some constant $C'$, and by our assumption on $f(p)$.
\end{enumerate}
As the errors for both ranges of $n$ are $o(1)$, we have the desired result.
\end{proof}
Lastly, combining Proposition \ref{prop: second step} and \ref{prop: prod v_l,k bound} yields Theorem \ref{thm: stronger vertical sato-tate}.
\begin{proof}[Proof of Theorem \ref{thm: stronger vertical sato-tate}]
We first tackle the case assuming GRH. Take $l_0=(\log p)^3$. The condition implies that for all constants $C$ we have $f(p)\ge \exp(C(\log(l_0))^2)$ for all sufficiently large $p$, so we can choose $k=k(p)$ to be a slowly increasing unbounded function such that $f(p)\ge \exp(C'k(\log(l_0)^2))$ for all sufficiently large $p$, where $C'$ is the constant in Proposition \ref{prop: prod v_l,k bound}. Thus, by Proposition \ref{prop: second step} and \ref{prop: prod v_l,k bound} respectively, Equations \eqref{eqn: rounded off estimate} and \eqref{eqn: prod v_l,k bound} are true. Since $t\in [t_0,t_0+f(p))$ and $f(p)=o(\sqrt p)$, we have $$\frac{\sqrt{-\Delta}}{2\pi \sqrt p} \rightarrow \frac{1}{2\pi}\sqrt{4-(t_0/\sqrt p)^2}$$
as $p\rightarrow \infty$, so substituting this into Equation \eqref{eqn: rounded off estimate} and taking expectation yields
$$\E\left[\frac{N_{t,p}}{2\sqrt p}\right] = (1+o(1))\frac{1}{2\pi}\sqrt{4-(t_0/\sqrt p)^2} \ \E\left[\prod_{l<l_0}v_{l,k}(\Delta)\right].$$
Then, using Equation \eqref{eqn: prod v_l,k bound} gives
$$\E\left[\frac{N_{t,p}}{2\sqrt p}\right] = (1+o(1))\frac{1}{2\pi}\sqrt{4-(t_0/\sqrt p)^2} \left(\E\left[\prod_{l<l_0}Y_{l,p,k}\right]+o(1)\right)$$
where we take the independent product on the right. 

Define $Y_{l,p,k}$ to be dependent on $Y_{l,p}$ in the same way as in the proof of Proposition \ref{prop: small vertical slices}, so by Equation \eqref{eqn: pf 2D eqn 4} we have 
$$\E\left[\prod_{l<l_0}Y_{l,p,k}\right] = \E\left[\prod_{l<l_0}Y_{l,p}\right] + o(1).$$By Lemma \ref{lem: Y_l,p expectation variance}, $\E[Y_{l,p}] = 1$ for $p\neq l$ and $\E[Y_{p,p}] = 1+O(p^{-1})$, so taking the product gives $$\E\left[\prod_{l<l_0}Y_{l,p}\right] = 1+o(1).$$
Hence, combining the equations above yields $$\E\left[\frac{N_{t,p}}{2\sqrt p}\right] = (1+o(1)) \frac{1}{2\pi}\sqrt{4-(t_0/\sqrt p)^2},$$
and this directly implies Equation \eqref{eqn: threshold question} because
$$\frac{\#\{E/\F_p \mid t_p(E)\in \left[t,t+f(p)\right)\}}{\frac{f(p)}{\sqrt p}\#\{E/\F_p\}} = \frac{f(p)\E[N_{t,p}] }{\frac{f(p)}{\sqrt p}2p} + o(1)=  \E\left[\frac{N_{t,p}}{2\sqrt p}\right] + o(1),$$
and the error term is uniform across all $t$.

Now, we prove the unconditional statement without assuming GRH in almost the same way. The condition tells us that there is some $\epsilon'>0$ where $f(p)\ge p^{\epsilon'}$ for all sufficiently large $p$. In Proposition \ref{prop: second step}, pick $\epsilon=\epsilon'/3$ and take $l_0 = g_{\kappa}(p)$, so there are at most $O(p^{\epsilon})$ exceptional values of $t$ that do not satisfy Equation \eqref{eqn: rounded off estimate}. Hence, there is a $O(p^{-2\epsilon})$ probability for a random $t\in [t_0,t_0+f(p))$ to be an exception. For these exceptions, we use the following well-known bound that for every $\epsilon>0$ we have $H(\Delta) = O(|\Delta|^{1/2+\epsilon})$, see Corollary 5.2 of \cite{Katz_Lang-Trotter}. Thus, $N_{t,p} = H(\Delta) = O(p^{1/2+\epsilon})$ so the exceptions will only affect $\E\left[\frac{N_{t,p}}{2\sqrt p}\right]$ by at most $O(p^{-\epsilon})=o(1)$. Then, our choice of $f(p)$ and $l_0$ satisfy the conditions of both the unconditional version of Proposition \ref{prop: second step} and \ref{prop: prod v_l,k bound}, so Equation \eqref{eqn: rounded off estimate} and \eqref{eqn: prod v_l,k bound} are true, and we proceed in the same way as above.
\end{proof}
\section{Appendix}
The following lemma states the possible distributions for $Y_{l,p}$:

\begin{lemma}\label{lem: Y_l,p explicit distributions}
For an odd prime $l$, there are the following three cases.
\begin{enumerate}[(\alph*)]
\item If $p=l$, then 
$$Y_{l,p} = \begin{cases}
(1-\frac 1 l)^{-1} & \text{with probability } (l-1)/l \\
1 & \text{with probability } 1/l\\
\end{cases}.$$
\item If $(\frac p l)=-1$, then 
$$
Y_{l,p} = \begin{cases}
(1+\frac 1 l)^{-1} & \text{with probability }(l+1)/2l \\
(1-\frac 1 l)^{-1} & \text{with probability }(l-1)/2l \\
\end{cases}.$$
\item If $(\frac p l)=+1$, then $$
Y_{l,p} = \begin{cases}
(1+\frac 1 l)^{-1} & \text{with probability }(l-1)/2l \\
(1-\frac 1 l)^{-1} & \text{with probability }(l^2-2l-1)/2(l^2+l) \\
(l-\frac{1}{l^{s-1}})/(l-1) & \text{with probability }2(l-1)/l^{2s}\text{ for all } s\ge 1 \\
(l-\frac{2}{l^s+l^{s-1}}))/(l-1) & \text{with probability }(l-1)/l^{2s+1}\text{ for all } s\ge 1 \\
\end{cases}.$$
\end{enumerate}
If $l=2$, there are the following cases.
\begin{enumerate}[(\alph*)]
\item If $p=2$, then 
$$Y_{2,p} = \begin{cases}
1 & \text{with probability } 1/2\\
2 & \text{with probability } 1/2\\
\end{cases}.$$
\item If $p\equiv 3 \pmod 4$, then 
$$Y_{2,p} = \begin{cases}
2/3 & \text{with probability } 1/2\\
1 & \text{with probability } 1/4\\
4/3 & \text{with probability } 1/8\\
2 & \text{with probability } 1/8\\
\end{cases}.$$
\item If $p\equiv 5 \pmod 8$, then
$$Y_{2,p} = \begin{cases}
2/3 & \text{with probability } 1/2\\
1 & \text{with probability } 1/4\\
3/2 & \text{with probability } 1/8\\
5/3 & \text{with probability } 1/16\\
2 & \text{with probability } 1/16\\
\end{cases}.$$
\item If $p\equiv 1 \pmod 8$, then
$$Y_{2,p} = \begin{cases}
2/3 & \text{with probability } 1/2\\
1 & \text{with probability } 1/4\\
3/2 & \text{with probability } 1/8\\
2 & \text{with probability } 1/48\\
2-\frac{1}{2^{s+1}} & \text{with probability } 1/2^{2s+2}\text{ for all } s\ge 1\\
2-\frac{1}{3\cdot 2^{s}} & \text{with probability } 1/2^{2s+4} \text{ for all } s \ge 1\\
\end{cases}.$$
\end{enumerate}
\end{lemma}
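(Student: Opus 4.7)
The plan is to directly compute the joint Haar-measure distribution of the pair $(\delta, \left(\frac{\Delta/l^{2\delta}}{l}\right))$ for $\Delta = t^2 - 4p$ with $t$ uniform on $\Z_l$, split into cases by $l$ and the residue of $p$, and then substitute into Gekeler's formula \eqref{eqn: v_l original} to read off each value of $v_l$ and its probability. Since Proposition \ref{prop: N_t,p formula} expresses $v_l(\Delta)$ entirely in terms of these two quantities (plus the extra constraint $\Delta/2^{2\delta} \equiv 0,1 \pmod 4$ when $l = 2$), this suffices.

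For odd $l$ with $\left(\frac{p}{l}\right) = -1$: Since $4p$ is a non-square unit in $\Z_l$, the equation $t^2 = 4p$ has no solution, and $\Delta$ is always a unit (if $v_l(t) \ge 1$ then $\Delta \equiv -4p$ is a non-square unit; otherwise $\Delta \bmod l$ cannot vanish). Thus $\delta = 0$ always, and the classical identity $\sum_{t \in \F_l}\left(\frac{t^2 - 4p}{l}\right) = -1$ yields exactly $(l-1)/2$ values of $t$ giving a square and $(l+1)/2$ giving a non-square, producing part (b). For odd $l = p$, a direct split on whether $l \mid t$ handles part (a): if $l \nmid t$ (probability $(l-1)/l$) then $\Delta \equiv t^2 \pmod l$ is a nonzero square; if $l \mid t$ (probability $1/l$) then $l \| \Delta$ because $l^2 \mid t^2$ but $l^2 \nmid 4l$, so $\delta = 0$ with Kronecker symbol $0$, and Gekeler's formula simplifies to $v_l = 1$.

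For odd $l$ with $\left(\frac{p}{l}\right) = +1$: Hensel's lemma gives $4p = b^2$ in $\Z_l$, and we factor $\Delta = (t-b)(t+b)$. Since $(t+b) - (t-b) = 2b$ is a unit, at most one of the two factors is divisible by $l$. A direct Haar computation then gives $\Pr(v_l(\Delta) = 0) = (l-2)/l$ and $\Pr(v_l(\Delta) = k) = 2(l-1)/l^{k+1}$ for $k \ge 1$. When $k = 2s$ with $s \ge 1$, the reduction $\Delta/l^{2s} \bmod l$ is the product of a uniformly distributed unit (coming from the divisible factor) and $\pm 2b$, hence uniform on $\F_l^*$, so squares and non-squares split evenly with probability $(l-1)/l^{2s+1}$ each; for $k = 2s+1$ odd, the Kronecker symbol vanishes. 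For $k = 0$, restricting the character sum $\sum_t \left(\frac{t^2 - b^2}{l}\right) = -1$ to $t \not\equiv \pm b \pmod l$ gives $(l-3)/2$ squares and $(l-1)/2$ non-squares. Substituting each $(\delta,\text{symbol})$-pair into Gekeler's formula and simplifying yields part (c).

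For $l = 2$, the same strategy applies but with finer bookkeeping: split by $p \pmod 8$, parametrize $t$ by its first few $2$-adic digits, and compute $\Delta \pmod {2^N}$ for $N$ large enough to determine $\delta$ (subject to the constraint $\Delta/2^{2\delta} \equiv 0,1 \pmod 4$) and the Kronecker symbol, which for odd $\Delta$ is $(-1)^{(\Delta^2-1)/8}$. Each of the four residue classes of $p$ mod $8$ yields a different enumeration, and this is the main obstacle in the proof: the interaction between the $\delta$-constraint and the $2$-adic Kronecker symbol formula makes this case noticeably more delicate than odd $l$. Once the probabilities of each $(\delta,\text{symbol})$ pair are computed, substitution into Gekeler's formula produces the tables in parts (a)--(d) for $l = 2$; verifying that the listed probabilities sum to $1$ in each case provides a useful sanity check.
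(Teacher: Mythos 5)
Your proposal is a direct computation from Definition~\ref{defn: Y_l,p} and Gekeler's formula~\eqref{eqn: v_l original}, which is exactly what the paper does; the paper's own proof is a one-line remark deferring the $p\neq l$ computations to pp.~9--10 of \cite{galbraith_mckee_2000}, so your write-up is essentially an expansion of that citation rather than a different route. The odd-$l$ details you give check out: the $\left(\frac{p}{l}\right)=-1$ case via the character-sum identity $\sum_{t\in\F_l}\left(\frac{t^2-4p}{l}\right)=-1$ (giving $(l-1)/2$ residues and $(l+1)/2$ nonresidues with $\delta=0$ throughout), the $p=l$ split on $l\mid t$ giving $\nu_l(\Delta)=1$ hence $\delta=0$ and symbol $0$, and the $\left(\frac{p}{l}\right)=+1$ case via the factorization $\Delta=(t-b)(t+b)$ with $\nu_l(\Delta)$ geometric; I verified that collecting the contributions from $\delta\geq1$ with symbol $+1$ into the $(1-\frac1l)^{-1}$ bucket reproduces the stated probability $(l^2-2l-1)/(2(l^2+l))$, and that your $\nu_l(\Delta)=2s-1$ and $\nu_l(\Delta)=2s$ cases (under the lemma's shift in the index $s$) reproduce the last two rows.

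For $l=2$ you outline the same bookkeeping but leave it to the reader, flagging the interaction between Gekeler's extra constraint $\Delta/2^{2\delta}\equiv0,1\pmod4$ and the $2$-adic Kronecker symbol. One small caution worth adding: for $p\equiv1\pmod8$, the element $4p$ is a square in $\Z_2$, so $\nu_2(\Delta)$ is unbounded and no fixed number of $2$-adic digits of $t$ determines $\delta$; as with odd $l$ and $\left(\frac{p}{l}\right)=+1$, you must factor $\Delta=(t-b)(t+b)$ over $\Z_2$ and track the valuation geometrically. That is implicit in your ``same strategy'' remark but is the one place a literal ``compute $\Delta\bmod 2^N$ for $N$ large enough'' would fail.
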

\begin{proof}
This follows from the definition of $Y_{l,p}$. In particular, the $p\neq l$ cases have been worked out in pages 9-10 of \cite{galbraith_mckee_2000}.
\end{proof}

\printbibliography

\end{document}